\documentclass[11 pt]{amsart}
\usepackage{amsmath,amsthm,amsfonts,amssymb,amscd,amsrefs}
\usepackage[top=3cm, bottom=3cm,left=2cm, right=2cm]{geometry}
\allowdisplaybreaks[1]
\usepackage{enumitem}
\setlist[enumerate]{font=\normalfont}
\usepackage{centernot}
\usepackage{xcolor}
\usepackage{soul}
\usepackage{tikz}
  \usepackage[linkcolor=red,citecolor=green,colorlinks=true]{hyperref}
\usepackage{mathrsfs}
\usepackage{mathtools}
\usepackage{etoolbox}
\usepackage{scalerel,stackengine}
\stackMath
\newcommand\widecheck[1]{%
\savestack{\tmpbox}{\stretchto{%
  \scaleto{%
    \scalerel*[\widthof{\ensuremath{#1}}]{\kern-.6pt\bigwedge\kern-.6pt}%
    {\rule[-\textheight/2]{1ex}{\textheight}}
  }{\textheight}%
}{0.5ex}}%
\stackon[1pt]{#1}{\scalebox{-1}{\tmpbox}}%
}
\theoremstyle{definition}
\newtheorem{defn}{Definition}[section]
\newtheorem{qn}[defn]{Question}
\newtheorem{ex}[defn]{Example}

\newtheorem{case}{Case}
\theoremstyle{remark}

\newtheorem{rem}[defn]{Remark}
\theoremstyle{plain}
\newtheorem{thm}[defn]{Theorem}
\newtheorem{step}{Step}
\newtheorem{lem}[defn]{Lemma}

\newcounter{cnt1}
\newcounter{cnt2}
\newcounter{cnt3}
\newcounter{cnt4}
\newcommand{\blr}{\begin{list}{$($\roman{cnt1}$)$} {\usecounter{cnt1}
 \setlength{\topsep}{0pt} \setlength{\itemsep}{0pt}}}
\newcommand{\blR}{\begin{list}{\Roman{cnt4}.\ } {\usecounter{cnt4}
 \setlength{\topsep}{0pt} \setlength{\itemsep}{0pt}}}
\newcommand{\bla}{\begin{list}{$(\alph{cnt2})$} {\usecounter{cnt2}
 \setlength{\topsep}{0pt} \setlength{\itemsep}{0pt}}}
\newcommand{\bln}{\begin{list}{$($\arabic{cnt3}$)$} {\usecounter{cnt3}
 \setlength{\topsep}{0pt} \setlength{\itemsep}{0pt}}}
\newcommand{\el}{\end{list}}

\numberwithin{equation}{section}
\makeatletter 

\newcommand{\Rmnum}[1]{\expandafter\@slowromancap\romannumeral #1@}

\makeatother
\makeatletter
\def\imod#1{\allowbreak\mkern10mu({\operator@font mod}\,\,#1)}
\makeatother
\makeatletter 
\patchcmd{\@settitle}{\uppercasenonmath\@title}{}{}{}
\patchcmd{\@setauthors}{\MakeUppercase}{}{}{}
\patchcmd{\section}{\scshape}{}{}{}
\makeatother
\keywords{Spaces of Bochner integrable functions, norm-attaining functionals, weak$^*$-weak points of continuity, weak$^*$-norm points of continuity, smooth point, Radon measure, separability.}
\subjclass[2010]{ Primary 41A65, secondary  46B20, 41A50, 46E15}
\begin{document}
\selectfont
\title[\tiny{Points of continuity in Bochner function spaces $L^{p}(\mu, X)$}]{Weak$^*$-weak points of continuity and very smooth points in Bochner function spaces $L^{p}(\mu, X)$}
\author{Saurabh Dwivedi}
    \address{Department of Mathematics, Shiv Nadar Institution of Eminence, Delhi-NCR, India}
	\email{sd605@snu.edu.in, saurabhdewedi876@gmail.com}
	
	\begin{abstract}
    Suppose that $([0,1],\mathcal{A},\mu)$ is the Lebesgue measure space and $1< p<\infty$ with $\frac{1}{p}+\frac{1}{q}=1$. We prove that an unit vector $h\in L^{p}(\mu, X)^*$ is a weak$^*$-weak point of continuity of $S(L^{p}(\mu, X)^*)$ if and only if $\frac{h(t)}{\|h(t)\|}$ is a weak$^*$-weak point of continuity of $S(X^*)$ for almost every $t\in [0, 1]$ with $h(t)\neq 0$, where $X$ is a Banach space such that $X^*$ is separable. As an application of this result, under the same hypothesis on $X$, we prove that a point $f\in L^p(\mu, X)_{1}$ is a very smooth point  if and only if $\frac{f(t)}{\|f(t)\|}\in S(X)$ is a very smooth point for almost every $t\in [0, 1]$ with $f(t)\neq 0$. We give several examples to show that these results does not hold for $p=1$. 
\end{abstract}
\maketitle

\section{Introduction}
Throughout, $X$ denote a real or complex Banach space, $X^*$ the dual space of $X$, $X_1$ the unit ball of $X$ and $S(X)$ the unit sphere of $X$, $([0,1],\mathcal{A},\mu)$ the Lebesgue measure space, $p$ and $q$ positive numbers with $\frac{1}{p}+\frac{1}{q}=1$. For any set $A$, we denote by $A^c$ the set-theoretic complement of $A$.

The geometric properties of the unit ball of the Bochner function spaces $L^{p}(\mu, X)$ have been studied extensively. Characterizations have been obtained for extreme points, points of continuity, denting points, and strongly extreme points of the unit ball of $L^{p}(\mu, X)$. (see, for example, \cites{HL93, PG1, LP, PG2}). However, the study of weak$^*$-weak points of continuity (and weak$^*$-norm points of continuity) of the dual unit ball of $L^{p}(\mu, X)$ has not been studied in detail. In this paper, we study weak$^*$-weak points of continuity (and weak$^*$-norm points of continuity) of the dual unit ball of $L^{p}(\mu, X)$ and their relation to weak$^*$-weak points of continuity (and weak$^*$-norm points of continuity) of the dual unit ball of $X$. We also provide some applications of these results to study very smooth points of $L^{p}(\mu, X)$.

Let $K\subseteq X_{1}^*$ be a norm closed and bounded subset of $X_{1}^*$. We recall from \cite{HL93} that a point $x^*\in K$ is said to be a weak$^*$-norm point of continuity (\textnormal{w}$^*$- $\|\cdot\|$ PC) if the identity map $I:(K,\textnormal{w}^*)\rightarrow(K,\|.\|)$ is continuous at $x^*$, and a weak$^*$-weak point of continuity (\textnormal{w}$^*$-\textnormal{w} PC) if the identity map $I:(K,\textnormal{w}^*)\rightarrow(K,\textnormal{w})$ is continuous at $x^*$. When $K=S(X^*)$, we refer the reader to \cite{SMR} for a detailed study of w$^*$-w PC's of $K$. Here the authors showed that if $X$ is not reflexive, then every w$^{*}$-w PC of $X_{1}^{*}$ lies in $S(X^{*})$. They also proved that a functional $x^{*}$ with $\|x^{*}\| = 1$ is a w$^{*}$-w PC of $X_{1}^{*}$ if and only if it is a w$^{*}$-w PC of $S(X^{*})$. These points of continuity are closely related to several geometric notions in Banach space theory, such as G\^ateaux and Fr\'echet differentiability of the norm, denting points, extreme points, and the uniqueness of norm-preserving extensions of functionals on $X$ to $X^{**}$ (see \cites{GI, GGS, HWW}).

To provide further motivation for our study, let us recall the definitions of a denting point and of a \textnormal{w}$^*$-denting point. Let $C$ be a bounded subset of $X$. and let $x^*\in X^*$ and $\alpha>0$. Then the set $$S(C, x^*, \alpha)=\{x\in C:x^*(x)>\operatorname{sup}x^*(C)-\alpha\}$$ is called a slice of $C$ determined by $x^*$ and $\alpha$. Similarly one can define $\textnormal{w}^*$-slice of bounded subsets of $X^*$ determined by $x\in X$ and $\alpha>0$. A point $x\in C$ is said to be a denting point of $C$ if for every $\varepsilon>0$, there exists a slice $S(C, x^*, \alpha)$ of $C$ such that $x\in S(C, x^*, \alpha)$ and $\operatorname{diam}(S(C, x^*, \alpha))<\varepsilon$, that is, the family of all slices of $C$ containing $x$ is neighborhood base of $x$ with respect to the relative norm topology. Similarly, one can define a \textnormal{w}$^*$-denting point of a bounded subset of $X^*$. It is well known that a point $x^*\in S(X^*)$ is a \textnormal{w}$^*$-denting point of $X_{1}^*$ if and only if it is a \textnormal{w}$^*$- $\|\cdot\|$ PC and an extreme point of $S(X^*)$. (see \cite{LLB}). We recall the following characterizations of \textnormal{w}$^*$-denting points. 

\begin{thm}\cite[Corollary~2.12]{HL93}\label{T11}
    Suppose that $(\Omega, \Sigma, \mu)$ is non-atomic and $h\in L^{p}(\mu, X)^*$. Then $h$ is \textnormal{w}$^*$-denting point of $L^{p}(\mu, X)_{1}^*$ if and only if $h$ is a \textnormal{w}$^*$- $\|\cdot\|$ PC of $L^{p}(\mu, X)_{1}^*$ 
\end{thm}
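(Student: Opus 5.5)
The forward implication is the general fact already quoted in the introduction: a point $h$ is a \textnormal{w}$^*$-denting point of $L^{p}(\mu, X)_{1}^*$ exactly when it is a \textnormal{w}$^*$- $\|\cdot\|$ PC and an extreme point (see \cite{LLB}). So all the work is in the converse. Assume $h$ is a \textnormal{w}$^*$- $\|\cdot\|$ PC of $L^{p}(\mu, X)_{1}^*$. It suffices to show that $h$ is an extreme point of $L^{p}(\mu, X)_{1}^*$; the same characterization then gives that $h$ is \textnormal{w}$^*$-denting.

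The first step is to show $\|h\|=1$. If $\|h\|<1$, then since $\mu$ is non-atomic and $L^{p}(\mu,X)$ is infinite dimensional, every \textnormal{w}$^*$-neighbourhood of $h$ in the ball contains points at norm distance bounded below from $h$. Concretely, I would pick disjoint sets $A_n$ of small measure and unit vectors $x_n\in X$. The functionals $g_n=h+\delta\,\mu(A_n)^{-1/q}\,x_n^*\chi_{A_n}$ have norm at most $1$ for small $\delta$, converge \textnormal{w}$^*$ to $h$ (they are disjointly supported and normalized in $L^q$), and satisfy $\|g_n-h\|=\delta$. This contradicts the \textnormal{w}$^*$- $\|\cdot\|$ PC hypothesis.

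The second step is extremality. Suppose $h=\tfrac12(h_1+h_2)$ with $h_i$ in the ball and $h_1\neq h_2$, and set $g=\tfrac12(h_1-h_2)$. I would use non-atomicity to produce a sequence of "Rademacher-type" perturbations. Take a \textnormal{w}$^*$-measurable representation (or work abstractly via the lattice structure), and for each $n$ choose a partition of $\Omega$ into sets $E_n$, $E_n^c$ splitting every relevant set in half, in the spirit of Rademacher functions $r_n$. Then define $k_n=h+r_n g$, that is, $k_n=h_1$ on $E_n$ and $h_2$ on $E_n^c$. These $k_n$ are "mixtures" of $h_1$ and $h_2$.

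I then need two facts about the $k_n$. First, $\|k_n\|\le 1$; this needs a pointwise/local description of the dual norm, namely $L^{p}(\mu,X)^*=L^q(\mu,X^*)$ in the $w^*$-sense, with norm given by the $L^q$ norm of a local norm function. Second, $k_n\to h$ \textnormal{w}$^*$, because $r_n\to0$ weakly$^*$ in $L^\infty$ by the Riemann–Lebesgue-type property of the Rademacher system. Meanwhile $\|k_n-h\|=\|g\|>0$ for every $n$, which contradicts the \textnormal{w}$^*$- $\|\cdot\|$ continuity at $h$. Hence $h$ is extreme.

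The main obstacle is the norm estimate $\|k_n\|\le 1$ for general $X$, where $L^{p}(\mu,X)^*$ need not equal $L^q(\mu,X^*)$. I would handle it via the representation of $L^{p}(\mu,X)^*$ as $L^q(\mu,X^*_{w^*})$, i.e.\ \textnormal{w}$^*$-measurable functions, with norm $\|\,|\,\cdot\,|\,\|_q$ for the lattice-valued local norm. The local norm of $k_n$ equals that of $h_1$ on $E_n$ and of $h_2$ on $E_n^c$. Its $q$-th power then integrates to about $\tfrac12(\|h_1\|^q+\|h_2\|^q)\le1$, provided the $E_n$ are chosen to split the integrals exactly; non-atomicity via Lyapunov's theorem gives such $E_n$ with the needed asymptotic independence.
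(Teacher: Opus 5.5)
The paper does not prove this statement: it is quoted from \cite[Corollary~2.12]{HL93} and used only, together with Theorem~\ref{T12}, to derive the characterization of w$^*$-$\|\cdot\|$ PCs. Your argument is therefore an independent, self-contained route, and it is correct in outline. The reduction is sound: w$^*$-slices are w$^*$-open, and by Choquet's lemma an extreme w$^*$-$\|\cdot\|$ PC is w$^*$-denting. Your mixing $k_n=h+r_ng$ is exactly the device of Example~\ref{E511}, where the non-extreme midpoint $\frac12(e_1+e_2)$ fails to be a w$^*$-$\|\cdot\|$ PC. Your proof shows that this failure is forced at every non-extreme point once $\mu$ is non-atomic, which makes visible where non-atomicity enters; the bare citation hides this.

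Several simplifications are available:
\begin{enumerate}
\item Your Step~1 is redundant, since extremality already forces $\|h\|=1$.
\item For $p=1$ the norm bound is automatic, because the dual of $L^1(E,X)\oplus_1L^1(E^c,X)$ is an $\ell^\infty$-sum.
\item For $1<p<\infty$ exact Lyapunov splitting is unnecessary. With local norms, $\|k_n\|^q=\frac12(\|h_1\|^q+\|h_2\|^q)+\frac12\int r_n(|h_1|^q-|h_2|^q)\,d\mu$, and the last term tends to $0$ once $r_n\to0$ w$^*$ in $L^\infty(\mu)$. Hence $k_n/\max(1,\|k_n\|)$ lies in the ball, converges w$^*$ to $h$, and stays at norm distance tending to $\|g\|$.
\end{enumerate}

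The one point you must actually supply is a w$^*$-null sequence $r_n=2\chi_{E_n}-1$ on an arbitrary non-atomic $(\Omega,\Sigma,\mu)$; invoking ``Rademacher-type'' functions alone does not give this. One way to construct it:
\begin{enumerate}
\item Build a dyadic tree of $\mu$-halvings, so that $E_n$ halves each cell of the partition generated by $E_1,\dots,E_{n-1}$.
\item Then $\int_F r_n\,d\mu=0$ for every $F$ in the algebra generated by $E_1,\dots,E_{n-1}$.
\item Extend to all $\psi\in L^1(\Sigma)$ via $\int r_n\psi\,d\mu=\int r_n\,\mathbb{E}(\psi\mid\Sigma_0)\,d\mu$, where $\Sigma_0=\sigma(E_k:k\ge1)$.
\end{enumerate}
It follows that $\langle r_ng,f\rangle=\int r_n(t)\,g(t)(f(t))\,d\mu\to0$ for every $f\in L^p(\mu,X)$.
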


\begin{thm}\cite[Theorem~3]{LLB}\label{T12}
    An element $h$ in $L^{p}(\mu, X)_{1}^*$ with $\|h\|=1$ is a \textnormal{w}$^*$-denting of $L^{p}(\mu, X)_{1}^*$ if and only if $\frac{h(t)}{\|h(t)\|}$ is a \textnormal{w}$^*$-denting point of $X_{1}^*$ for almost every $t\in \Omega$ with $h(t)\neq 0$.
\end{thm}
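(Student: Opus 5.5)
The plan is to work in the representation $L^{p}(\mu,X)^*=L^{q}(\mu,X^*)$, which holds because $X^*$ is separable and hence has the Radon--Nikod\'ym property; separability of $X^*$ also makes $X$ separable. Throughout we fix a countable dense set $\{x_n\}\subseteq S(X)$ and a countable dense set $\{y_m^*\}\subseteq X_1^*$. We will use two facts: a point $h$ is w$^*$-denting exactly when w$^*$-slices $S(L^{q}(\mu,X^*)_1,f,\alpha)$ with $f\in S(L^{p}(\mu,X))$ containing $h$ can be chosen of arbitrarily small diameter, and, on $X_1^*$, a w$^*$-slice determined by $x\in S(X)$ may be perturbed to one determined by a nearby $x_n$ at the cost of slightly enlarging $\alpha$.

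\emph{Sufficiency.} Set $\hat h(t)=h(t)/\|h(t)\|$ on $\{h\neq0\}$ and fix $\varepsilon>0$.
\begin{enumerate}
\item For every $t$ with $h(t)\neq 0$ there are $n$ and a rational $\alpha$ such that $\hat h(t)\in S(X_1^*,x_n,\alpha)$ and this slice has diameter $<\varepsilon$. Hence $\{h\neq 0\}$ is a countable union of the measurable sets
\[
A_{n,\alpha}=\{t:\operatorname{Re}\hat h(t)(x_n)>1-\alpha\}
\]
taken over those pairs whose slice has diameter $<\varepsilon$. Disjointify these into sets $B_k$, each carrying a pair $(x_{n_k},\alpha_k)$.
\item Discard a set of small $\|h\|^q$-mass. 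Keep finitely many $B_k$, intersected with $\{\|h(t)\|\ge\eta\}$ for a small $\eta>0$.
\item Define the Hölder-aligned function $f(t)=\|h(t)\|^{q-1}x_{n_k}$ on $B_k$ and $f=0$ elsewhere, then normalize $f$ in $L^p$. By construction $\langle h,f\rangle$ is close to $1$.
\item Let $g\in L^{q}(\mu,X^*)_1$ with $\operatorname{Re}\langle g,f\rangle>1-\beta$. By the near-equality case of Hölder's inequality and uniform convexity of scalar $L^q$, $\|g(\cdot)\|$ is $L^q$-close to $\|h(\cdot)\|$ on the retained set. Moreover $\operatorname{Re}\bigl(g(t)/\|g(t)\|\bigr)(x_{n_k})>1-\alpha_k$ outside a set of small measure, by a Chebyshev-type estimate.
\item On the good set, $g(t)/\|g(t)\|$ and $\hat h(t)$ lie in the same $\varepsilon$-small slice, so they are $\varepsilon$-close. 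Combining this with the norm estimate gives $\|g-h\|_q\lesssim\varepsilon$. The discarded set is controlled using the $L^q$ bounds on both $g$ and $h$.
\end{enumerate}

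\emph{Necessity.} Argue by contrapositive. Suppose $\hat h(t)$ fails to be w$^*$-denting on a set $B$ of positive measure. Then there is $\delta>0$ and a positive-measure $B'\subseteq B$ such that every w$^*$-slice of $X_1^*$ containing $\hat h(t)$, for $t\in B'$, has diameter $>\delta$. Now take an arbitrary w$^*$-slice $S(L^{q}(\mu,X^*)_1,f,\alpha)$ containing $h$. For $t$ in a positive-measure subset of $B'$, the point $\hat h(t)$ lies in the slice of $X_1^*$ determined by $f(t)/\|f(t)\|$ with a small parameter $\gamma$, and that slice contains a point $\delta/2$-far from $\hat h(t)$. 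Using the countable family $\{y_m^*\}$, choose such a far point measurably, say $t\mapsto y^*_{m(t)}$ with $m(t)$ the least admissible index. Then set $g=h$ off a small set $C\subseteq B'$ and $g=\|h(t)\|y^*_{m(t)}$ on $C$. This $g$ lies in the unit ball and in the original slice, yet $\|g-h\|_q\ge (\delta/2)\bigl(\int_C\|h\|^q\bigr)^{1/q}$. Although $C$ is small, this lower bound is a fixed positive number once $C$ is chosen with fixed positive $\|h\|^q$-mass. Hence the slice diameters are bounded below, and $h$ is not w$^*$-denting.

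The main obstacle is step 4 of sufficiency. There one must convert the single scalar condition $\operatorname{Re}\langle g,f\rangle>1-\beta$ into pointwise control of both the size $\|g(t)\|$ and the direction $g(t)/\|g(t)\|$ on most of the support, uniformly over the finitely many $B_k$. I will first try the standard splitting
\[
1-\beta<\int \|g\|\,\|f\|\,\operatorname{Re}\Bigl(\tfrac{g}{\|g\|}\Bigr)(x_{n_k})\le \int\|g\|\,\|f\| .
\]
Uniform convexity of $L^q$ handles the magnitudes. The gap between the middle and right terms, together with $\|f(t)\|\ge c>0$ on the retained set, forces the directional inequality off a small set.
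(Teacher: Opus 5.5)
The paper does not prove this statement (it is quoted from [LLB, Theorem~3]), so your argument has to stand on its own. Its sufficiency half has a genuine gap at Step~4.

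Put $\hat g=g/\|g\|$ and, on $B_k$, set $d_g(t)=1-\operatorname{Re}\hat g(t)(x_{n_k})$ and $d_h(t)=1-\operatorname{Re}\hat h(t)(x_{n_k})$. Your splitting gives $\int_R\|g\|\,\|f\|\,d_g\,d\mu<\beta$. So Chebyshev bounds the $\|g\|\,\|f\|$-weighted size of the set where $\hat g(t)\notin S(X_1^*,x_{n_k},\alpha_k)$ only by $\beta/\min_k\alpha_k$, and you need $\beta\ll\min_k\alpha_k$. But $h$ must lie in the slice, so $\beta>1-\operatorname{Re}\langle h,f\rangle$. A direct computation gives $1-\operatorname{Re}\langle h,f\rangle\ge\theta/q+\int_R\|h\|\,\|f\|\,d_h\,d\mu$, where $\theta$ is the $\|h\|^q$-mass discarded in Step~2; Step~1 only tells you $d_h<\alpha_k$ on $B_k$. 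Both terms cause trouble:
\begin{itemize}
\item The $\theta$-term is circular: the finitely many retained pieces, hence $\min_k\alpha_k$, are chosen only after $\theta$ is fixed.
\item The $d_h$-term is fatal once the $\alpha_k$ differ in size. Because $f$ pools all pieces into one scalar inequality, $g$ can equal $\|h\|$ times a norming functional of $x_{n_{k_1}}$ on a piece where $d_h$ is comparable to $\alpha_{k_1}$. It can then spend the saved deficit on a piece with $\alpha_{k_2}\ll\alpha_{k_1}$, leaving $S(X_1^*,x_{n_{k_2}},\alpha_{k_2})$ there entirely.
\end{itemize}
This already happens for a two-dimensional $X$ whose dual ball is a stadium, with $h$ taking two values, one of them a point where a flat edge meets a round cap. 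There your recipe can return slices whose diameter stays bounded away from $0$ however small $\varepsilon$ and $\beta$ are. Within a single piece your Chebyshev claim also fails when $d_h$ is close to $\alpha_k$, but that part is harmless, as explained next.

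Two ingredients are missing.

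First, w$^*$-slices of a dual ball grow at most linearly. The element $x$ attains its norm at some $e\in X_1^*$, and $y\mapsto(1-\lambda^{-1})e+\lambda^{-1}y$ maps $S(X_1^*,x,\lambda\alpha)$ into $S(X_1^*,x,\alpha)$. Hence $\operatorname{diam}S(X_1^*,x,\lambda\alpha)\le 2\lambda\operatorname{diam}S(X_1^*,x,\alpha)$ for $\lambda\ge 1$. It follows that $\|\hat g(t)-\hat h(t)\|\le\varepsilon\bigl(2+d_g(t)/\alpha_k\bigr)$ on $B_k$, so you only need the average of $d_g/\alpha_k$ to be bounded, not $\hat g(t)$ to stay inside the slice.

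Second, the pieces must be decoupled. The cleanest way avoids building a single slice altogether:
\begin{itemize}
\item $h$ is extreme, by a.e. extremality of $\hat h(t)$ plus strict convexity of $L^q$.
\item $h$ is a w$^*$-$\|\cdot\|$ PC. Take $g_n\to h$ weak$^*$ in the ball; sequences suffice since $L^p(\mu,X)$ is separable. Uniform convexity gives $\|g_n(\cdot)\|\to\|h(\cdot)\|$ in $L^q$. Testing against each $\chi_{B_k}f$ separately gives $\limsup_n\int_{B_k}\|g_n\|\,\|f\|\,d_{g_n}\,d\mu\le\alpha_k\int_{B_k}\|h\|\,\|f\|\,d\mu$. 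Combined with linear growth, this makes $\limsup_n\|g_n-h\|_q$ as small as you like.
\item The Lin--Lin--Troyanski characterization recalled in the paper (w$^*$-denting $=$ extreme $+$ w$^*$-$\|\cdot\|$ PC) then shows $h$ is w$^*$-denting.
\end{itemize}
If you want to keep a single slice, you must use one common scale $\alpha_0$ for almost all $t$, with $d_h\le\kappa\alpha_0$. That requires showing $\hat h(t)$ lies in small slices at every sufficiently small scale, for instance by replacing $x$ with $x+Lv$ where $v$ is almost normed by $\hat h(t)$; then nothing needs to be discarded.

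Your necessity argument is sound after one correction. An arbitrary $f$ need not be aligned with $h$ on $B'$; it may even vanish there. So use the slice of $X_1^*$ determined by $f(t)/\|f(t)\|$ with parameter $1-\operatorname{Re}\hat h(t)\bigl(f(t)/\|f(t)\|\bigr)+\gamma$, which always contains $\hat h(t)$, and an arbitrary far point where $f(t)=0$. Then $C=B'$ works.

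Finally, you have added the hypothesis that $X^*$ is separable, which the statement does not make.
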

Therefore, a point $h$ in $L^{p}(\mu, X)_{1}^*$ is a \textnormal{w}$^*$-$\|\cdot\|$ PC of $L^{p}(\mu, X)_{1}^*$ if and only if, for almost every $t$ with $h(t)\neq 0$, $\frac{h(t)}{\|h(t)\|}$ is both a \textnormal{w}$^*$-$\|\cdot\|$ PC and an
extreme point of $X_{1}^*$. 

In this paper, we provide a direct proof of this result and give an example to show that the converse does not hold without the additional assumption that $\frac{h(t)}{\|h(t)\|}$ is an extreme point of $X_{1}^*$ for almost every $t$ with $h(t)\neq 0$. In the context of \textnormal{w}$^*$-\textnormal{w} PCs, for $1<p<\infty$, we show that a point $h$ in $L^{p}(\mu, X)_{1}^*$ is a \textnormal{w}$^*$-\textnormal{w} PC of $S\big(L^{p}(\mu, X)^*\big)$ if and only if, for almost every $t$ with $h(t)\neq 0$, $\frac{h(t)}{\|h(t)\|}$ is a is a \textnormal{w}$^*$-\textnormal{w} PC of $S(X^*)$ when $X^*$ is separable.

Now, we recall the definiton of smooth points and very smooth points.

\begin{defn}\cite{DJ}\label{D1}
    A point $x\in S(X)$ is said to be a smooth point if there exists a unique linear functional $x^*\in X_{1}^*$ such that $x^*(x)=1$.
\end{defn}
 Very smoothness is a notion intermediate between G\^ateaux and Fr\'echet differentiability (see, for example, \cite{DJ}). 
\begin{defn}\cite{DJ}\label{D2}
    A point $x\in S(X)$ is said to be a very smooth point of $X$ if it is a smooth point in $X$ and continues to be a smooth point in $X^{**}$ under the canonical embedding.
\end{defn}
The connection between smoothness and our topic comes from the equivalence of (i) and (iv) in \cite[Remark~3.1]{GI}. We recall this result below.

\begin{lem}\cite[Remark~3.1]{GI}\label{L15}
Let $x\in S(X)$ is a smooth, and let $x^*$ is the unique norming functional of $x$. Then the following statements are equivalent:
\end{lem}
\begin{enumerate}[label=(\roman*)]
    \item $x$ is very smooth of $X$.
    \item The duality mapping is n-\textnormal{w} upper semicontinuous at $x$.
    \item $x^*$ has a unique norm-preserving extension from $X$ to $X^{**}$.
    \item $x^*$ is a \textnormal{w}$^*$-\textnormal{w} PC of $S(X^*)$.
\end{enumerate}
In \cite[Theorem~1.1]{DK}, the authors characterize the smooth points of $L^{p}(\mu, X)$ in terms of smooth points of $X$, under the assumption that $X^*$ is separable. In this paper, we provide a complete characterization of very smooth points of $L^{p}(\mu, X)$ in terms of very smooth points of $X$, when $X^*$ is separable.

Let $(\Omega, \Sigma, \mu)$ be a measure space, where $\mu$ is a non-atomic probability measure. We use $L^p(\mu, X)$ to denote the space of all (equivalent classes of) $X$-valued \textbf{Bochner $p$-integrable} functions, equipped with the norm $\|f\|=\left(\int_{\Omega}\|f(\omega)\|^{p}d(\mu(\omega))\right)^{\frac{1}{p}}$ for $f\in L^p(\mu, X)$. We refer the reader to \cite{DU} for more information on this topic. Let $\mu$ be a finite measure and suppose that $X^*$ has the Radon-Nikodym property (RNP) with respect to $\mu$. By \cite[Theorem~IV.1.1]{DU}, we then have $L^{p}(\mu, X)^{*} = L^{q}(\mu, X^{*})$, where $1<p\leq q<\infty$ are such that $\frac{1}{p}+\frac{1}{q}=1$ and $L^1(\mu, X)^*=L^{\infty}(\mu, X^*)$, the space of all $X^*$-valued strongly measurable functions that are essentially bounded. More generally, if $X$ is any Banach space, we still have the canonical embedding $L^{q}(\mu, X^*)\subseteq L^p(\mu, X)^*$. Let $A \subseteq \Omega$ be a measurable subset of positive measure, and set $\Sigma' = \Sigma|_{A}$ (the restricted $\sigma$-algebra) and $\lambda = \mu|_{A}$.  
If $X$ has the RNP with respect to $\mu$, then it is easy to see that $X$ also has the RNP with respect to $\lambda$.

In Section~2, we obtain the following main results.

\begin{thm}[Section~2, Theorem~2.5]
    Let $1\leq p<\infty$ and let $X$ be a Banach space such that $X^*$ is separable, so that $L^{p}(\mu, X)^*=L^{q}(\mu, X^*)$. If $h\in L^{q}(\mu, X^*)_{1}$ is a \textnormal{w}$^*$-\textnormal{w} PC of $ S\big(L^{q}(\mu, X^*)\big)$, then $\frac{h(t)}{\|h(t)\|}$ is a \textnormal{w}$^*$-\textnormal{w} PC of $S(X^*)$ for almost every $t\in [0, 1]$ with $h(t)\neq 0$.
\end{thm}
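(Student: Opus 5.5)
We argue by contraposition. Suppose the set $B=\{t: h(t)\neq 0,\ \frac{h(t)}{\|h(t)\|}\text{ is not a w}^*\text{-w PC of } S(X^*)\}$ has positive measure; we will exhibit a net in $S(L^{q}(\mu,X^*))$ converging weak$^*$ to $h$ but not weakly. Since $X^*$ is separable, $X$ is separable. We fix a countable dense set $\{x_n\}\subseteq X$, and we use that $L^{q}(\mu,X^*)^*$ contains $L^{p}(\mu,X^{**})$ canonically. Because the weak$^*$ topology on bounded subsets of $X^*$ is then metrizable, failure of w$^*$-w continuity at a point $y^*=\frac{h(t)}{\|h(t)\|}$ can be witnessed by sequences: there exist $\varepsilon>0$, $x^{**}\in S(X^{**})$ and a sequence $y_n^*\in S(X^*)$ with $y_n^*\to y^*$ weak$^*$ and $\operatorname{Re}x^{**}(y_n^*-y^*)\geq\varepsilon$.

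\textbf{Step 1 (measurable selection).} The key step is to choose these witnesses measurably in $t$. Using separability of $X^*$ and a countable dense set $\{z_k^*\}$ of $S(X^*)$, we rephrase non-continuity through countable data. For rational $\varepsilon$, and for each $m$, the set of $z^*\in S(X^*)$ with $|(z^*-y^*)(x_i)|<1/m$ for $i\le m$ has weak-diameter bounded below, with a fixed lower bound $\varepsilon$. We can then pick $\varepsilon$ and a set $B'\subseteq B$ of positive measure on which a uniform $\varepsilon$ works. Next we choose measurable maps $t\mapsto g_m(t)\in S(X^*)$, countably valued from $\{z_k^*\}$ on a partition, such that $g_m(t)$ is weak$^*$-close to $y^*(t)$ on $x_1,\dots,x_m$ and $\|g_m(t)-y^*(t)\|$ stays bounded away from $0$. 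For the weak-separation, Hahn–Banach gives a functional in $X^{**}$ at each $t$, but we need a single functional on $L^q$.

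\textbf{Step 2 (construction of the net and separation).} On $B'$ we set $h_m(t)=\|h(t)\|g_m(t)$, and $h_m=h$ off $B'$. Then $\|h_m\|=\|h\|=1$. Moreover $h_m\to h$ weak$^*$, since for $f\in L^p(\mu,X)$ we have pointwise convergence $\langle h_m(t),f(t)\rangle\to\langle h(t),f(t)\rangle$ (by density of $\{x_n\}$ and boundedness), and dominated convergence applies. To see that $h_m\not\to h$ weakly, we need a functional $\Phi\in L^{q}(\mu,X^*)^*$ with $\operatorname{Re}\Phi(h_m-h)\ge\delta>0$ for all $m$. Rather than building $\Phi$ pointwise, we use the Banach–Saks/Mazur idea. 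If $h_m\to h$ weakly, then convex combinations of the $h_m$ converge to $h$ in norm, so by passing to a subsequence, convex combinations of the $g_m(t)$ converge in norm to $y^*(t)$ for a.e. $t\in B'$.

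\textbf{Step 3 (contradiction from the pointwise failure).} To reach a contradiction we must arrange in Step 1 that each $g_m(t)$ lies in a fixed weakly closed convex set avoiding $y^*(t)$. Concretely, $g_m(t)$ should lie in $\{z^*:\operatorname{Re}x^{**}_t(z^*-y^*(t))\ge\varepsilon\}$ for some $x^{**}_t$ depending only on $t$, not on $m$. This is exactly the sequential witness from the opening paragraph, with $x^{**}$ fixed first, and all convex combinations then stay in that set. So we will select, measurably via the countable dense families, a pair $(x^{**}_t$-witness, sequence$)$, picking among countably many candidate sequences of $z_k^*$'s indexed rationally. This uniform choice of a single separating $x^{**}_t$ per $t$, made measurably, is the main obstacle. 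I would first try to encode it by the condition "$\inf$ over finite convex combinations of the tail $\{g_j(t)\}_{j\ge m}$ of $\|\cdot-y^*(t)\|\ge\varepsilon$", which is a countable, hence measurable, condition. By Hahn–Banach, this condition is equivalent to weak separation of $y^*(t)$ from the closed convex hull of the tail.
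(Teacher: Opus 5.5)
Your overall architecture matches the paper's. You replace $h$ on a set of positive measure by $\|h(t)\|g_m(t)$, get weak$^*$ convergence from dominated convergence, and rule out weak convergence through a uniform lower bound on norms of convex combinations. Your final encoding is exactly the set $C_\varepsilon$ of Lemma~\ref{L56} combined with Lemma~\ref{L58}. However, the step you call the main obstacle is the whole difficulty, and you do not carry it out.

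\textbf{What is missing.} You need one rational $\varepsilon$, a measurable set $B'$ of positive measure, and a measurable map $t\mapsto(g_m(t))_m$ on $B'$ such that $g_m(t)\to y^*(t)$ weak$^*$ and $\|\sum_j\lambda_jg_j(t)-y^*(t)\|\ge\varepsilon$ for every finite convex combination $\lambda$. Your Step~1 (countably valued choices, weak$^*$-close to $y^*(t)$, with $\|g_m(t)-y^*(t)\|$ bounded below) only witnesses failure of weak$^*$-norm continuity, so it yields no contradiction. The proposed repair, choosing ``among countably many candidate sequences indexed rationally'', does not work as stated:
\begin{enumerate}
\item Sequences drawn from a countable dense set form an uncountable family, a copy of $\mathbb{N}^{\mathbb{N}}$.
\item The convex-combination condition couples all the terms. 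Choosing $g_m(t)$ by least index, subject to conditions on the first $m$ terms, gives no guarantee that the infinite sequence is admissible. To make a least-index choice work, you must take the least index admitting an admissible continuation. That condition is a projection (an analytic condition), not a countable one, and handling it is precisely the content of the Jankov--von Neumann selection theorem.
\item The separating functional $x^{**}_t$ cannot come from a countable dense family either, since $X^{**}$ may be nonseparable (e.g.\ $X=c_0$).
\end{enumerate}
You also never show that $B$, or the sets $B_\varepsilon$ on which a fixed $\varepsilon$ works, are measurable. This is needed even to extract $B'$.

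\textbf{How the paper fills the gap.} The missing ingredient is a measurable selection theorem.
\begin{enumerate}
\item Treat candidate sequences as points of the compact metric space $K^{\mathbb{N}}$, where $K=(X_1^*,\textnormal{w}^*)$.
\item The set $C_\varepsilon$ of pairs $(x^*,(z_k))$ with $d(z_k,x^*)\le 1/k$ and $\|\sum_k\lambda_k(z_k-x^*)\|\ge\varepsilon$ for all $\lambda\in\Lambda$ is Borel. The norm is only weak$^*$-lower semicontinuous, so the second family of conditions gives a $G_\delta$ set rather than the closed set the paper asserts. Borel is all that is used.
\item By metrizability and Lemma~\ref{L58}, a unit vector of $X^*$ fails to be a w$^*$-w PC of $S(X^*)$ exactly when it lies in $\bigcup_{\varepsilon\in\mathbb{Q}^+}\pi(C_\varepsilon)$.
\item Since $t\mapsto h(t)/\|h(t)\|$ is measurable into $K$, the set $\{(t,(z_k)):(h(t)/\|h(t)\|,(z_k))\in C_\varepsilon\}$ lies in $\mathcal{A}\otimes\mathcal{B}_{K^{\mathbb{N}}}$.
\item Because Lebesgue measure is complete, its projection $B_\varepsilon$ is measurable, and it has a measurable section $t\mapsto(z_k^*(t))$ by von Neumann/Aumann selection.
\item Pettis' theorem makes $h_k=\|h\|z_k^*$ on $B_\varepsilon$ (and $h_k=h$ elsewhere) strongly measurable.
\item The same $\lambda$ is used at every $t$, so integrating gives $\|\sum_k\lambda_kh_k-h\|_q\ge\varepsilon\int_{B_\varepsilon}\|h\|\,d\mu>0$ for all $\lambda\in\Lambda$. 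Hence $h\notin\overline{\operatorname{co}}\{h_k\}$ directly.
\end{enumerate}
With this in place, your Mazur/subsequence argument in Step~2 is correct but unnecessary. The selected $h_k$ lie in the unit ball rather than the sphere. This is harmless: weak$^*$ lower semicontinuity gives $\|h_k\|_q\to1$, so normalizing them changes nothing.
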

The converse of the above theorem is not true for $p=1$ (see Example~\ref{E211}).
\begin{thm}[Section~2, Theorem~2.10]
    Let $X$ be a separable Banach space such that $X^*$ has the RNP, so that $L^{p}(\mu, X)^*=L^{q}(\mu, X^*)$. Let $h\in  S\big(L^{q}(\mu, X^*)\big)$ be such that $\frac{h(t)}{\|h(t)\|}$ is a \textnormal{w}$^*$-\textnormal{w} PC of $S(X^*)$ for almost every $t$ with $h(t)\neq 0$. Then $h$ is a \textnormal{w}$^*$-\textnormal{w} PC of $S\big(L^{q}(\mu, X^*)\big)$.
\end{thm}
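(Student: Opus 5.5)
We work in the setting $1<p<\infty$ and reduce everything to sequences. Since $X$ is separable and $\mu$ is Lebesgue measure, $L^{p}(\mu,X)$ is separable. Hence bounded subsets of $L^{q}(\mu,X^*)=L^{p}(\mu,X)^*$ are $\textnormal{w}^*$-metrizable, and continuity of $I:(S(L^q(\mu,X^*)),\textnormal{w}^*)\to(S(L^q(\mu,X^*)),\textnormal{w})$ at $h$ is equivalent to sequential continuity. So we fix $h_n\in S(L^q(\mu,X^*))$ with $h_n\to h$ weak$^*$ and aim to show $h_n\to h$ weakly. By a subsequence argument it is enough to show that every subsequence has a further subsequence converging weakly to $h$.

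\textbf{Step 1: localize the norms.} For every measurable $A$ we have $h_n\chi_A\to h\chi_A$ weak$^*$ (test against $f\chi_A$). By weak$^*$ lower semicontinuity of the norm,
\[
\liminf_n\int_A\|h_n(t)\|^q\,d\mu\ \ge\ \int_A\|h(t)\|^q\,d\mu .
\]
Applying the same inequality to $A^c$ and using $\|h_n\|=\|h\|=1$ gives $\int_A\|h_n\|^q\to\int_A\|h\|^q$ for every $A$. We then want to upgrade this to $\|h_n(\cdot)\|\to\|h(\cdot)\|$ in $L^q$-norm, at least along a subsequence. For this we use $1<q<\infty$, uniform convexity of scalar $L^q$, and the pointwise inequality $\|h(t)\|\le\liminf$ of averaged norms obtained from weak$^*$ lower semicontinuity in $X^*$ against countably many test vectors from a dense subset of $X$. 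This step is exactly where $p>1$ enters, which fits Example~\ref{E211}.

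\textbf{Step 2: pointwise control.} We want, after passing to a subsequence, a pointwise statement on $\{h\neq0\}$: $h_n(t)/\|h_n(t)\|\to h(t)/\|h(t)\|$ weak$^*$ for a.e.\ $t$, or at least in an averaged sense such as convergence in measure against each element of a countable dense set $\{x_k\}\subseteq X$. The hypothesis that $h(t)/\|h(t)\|$ is a $\textnormal{w}^*$-$\textnormal{w}$ PC of $S(X^*)$ would then turn this pointwise weak$^*$ convergence into pointwise weak convergence. This is the main obstacle, because weak$^*$ convergence in $L^q(\mu,X^*)$ does not force pointwise weak$^*$ convergence: oscillating sequences exist. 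The first thing we will try is to combine Step 1 with a Koml\'os/Mazur-type argument. Norm equality on every set, plus strict convexity of $t\mapsto t^q$, should rule out oscillation of $\|h_n(t)\|$. Oscillation of direction would then lower the norm of the weak$^*$ limit on some set of positive measure, which contradicts Step 1. Measurability of all the sets involved comes from separability of $X^*$ and from testing only against $\{x_k\}$. On $\{h=0\}$, Step 1 already gives $\int_{\{h=0\}}\|h_n\|^q\to0$, so that part converges to $0$ in norm.

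\textbf{Step 3: from pointwise weak convergence to weak convergence.} Let $\Phi\in L^q(\mu,X^*)^*$. By Egorov's theorem, the $L^q$-norm convergence of $\|h_n(\cdot)\|$ from Step 1 (which gives uniform $q$-integrability), and the absolute continuity of $\Phi$ with respect to $\mu$ on $q$-uniformly integrable sets, we reduce $\Phi(h_n-h)\to0$ to sets $A$ on which the convergence of Step 2 is uniform in a suitable sense and $\|h_n\|$ is bounded. On such sets we use the $\textnormal{w}^*$-$\textnormal{w}$ PC property in the form of Lemma~\ref{L15}(iii): unique norm-preserving extension. The hypothesis that $X^*$ has the RNP, so that elements of $X^*$-valued spaces are represented by densities, lets us control $\Phi$ restricted to $A$ by these pointwise data and conclude $\Phi(h_n\chi_A)\to\Phi(h\chi_A)$. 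Together with the smallness of the remaining pieces, this gives $h_n\to h$ weakly, which completes the argument.
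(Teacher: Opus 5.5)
\textbf{There is a genuine gap: Step 2 aims at a statement that is false under the theorem's hypotheses, and Step 3 rests entirely on it.}

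Your Step~1 is correct, and it is the right place to use $1<q<\infty$. Test against $\phi x$ with $\phi\ge 0$ and $x$ in a countable dense subset of $X$. This shows that any weak limit $g$ of a subsequence of $(|h_n|)$ in $L^q(\mu)$ satisfies $|h|\le g$ a.e.\ and $\|g\|_q\le 1=\|h\|_q$. Hence $g=|h|$, and uniform convexity gives $\big\||h_n|-|h|\big\|_q\to 0$.

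Step~2 fails on the following example. Take $X=c_0$ (separable, and $X^*=\ell^1$ has the RNP) and any $1<p<\infty$. As in Example~\ref{E511}, put $h\equiv\frac12(e_1+e_2)$ and $h_n=h+r_nv$ with $v=\frac12(e_1-e_2)$.
\begin{itemize}
\item Each $h(t)$ is a \textnormal{w}$^*$-\textnormal{w} PC of $S(\ell^1)$.
\item $\|h_n(t)\|=\|h(t)\|=1$, so Step~1 holds trivially.
\item $h_n\to h$ weak$^*$ in $L^q(\mu,\ell^1)$.
\item Yet $h_n(t)\in\{e_1,e_2\}$ for a.e.\ $t$, so $|(h_n(t)-h(t))(e_1)|=\frac12$ for a.e.\ $t$ and all $n$.
\end{itemize}
So no subsequence of the directions converges to $h(t)/\|h(t)\|$, either pointwise weak$^*$ or in measure against elements of $c_0$. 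Your heuristic that oscillation of direction must lower the norm of the weak$^*$ limit fails because $h(t)$ is the midpoint of a segment in $S(\ell^1)$. It would need $h(t)/\|h(t)\|$ to be extreme, which is exactly the extra hypothesis needed in the \textnormal{w}$^*$-$\|\cdot\|$ setting and absent here. In this example $h_n\to h$ weakly only because $r_n\to 0$ weakly in $L^q(\mu)$. That is an integrated effect; only convex combinations of the $h_n$, such as Ces\`aro means, converge pointwise. Step~3 is also too vague to check: you never say how the unique-extension property at the points $h(t)/\|h(t)\|$ controls a general $G\in L^q(\mu,X^*)^*=L^p_{\textnormal{w}^*}(\mu,X^{**})$.

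\textbf{Missing idea.} Apply the pointwise hypothesis to the test functional $G$, not to the $h_n$. As in Step~\ref{S1} of the paper's proof, use the formula $\inf_{x\in X}\{\|x^{**}-x\|+\operatorname{Re}x^*(x)-\operatorname{Re}x^{**}(x^*)\}=0$ at $x^*=h(t)/\|h(t)\|$. It yields a countably valued measurable $g_1:[0,1]\to X$ (set $g_1=0$ where $h=0$) such that $D_1=G-g_1$ satisfies
\[
\|D_1(t)\|\,\|h(t)\|\le\operatorname{Re}D_1(t)(h(t))+\eta\|h(t)\| .
\]
Combined with your Step~1 this finishes quickly. Let $E=\{\|G\|\le M,\ \|g_1\|\le M\}$. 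Weak$^*$ convergence gives $\int_E g_1(h_n)\to\int_E g_1(h)$, while
\[
\limsup_n\operatorname{Re}\int_E D_1(h_n)\,d\mu\le\lim_n\int_E\|D_1\|\,|h_n|\,d\mu=\int_E\|D_1\|\,|h|\,d\mu\le\operatorname{Re}\int_E D_1(h)\,d\mu+\eta .
\]
The set $E^c$ contributes at most $2\|\chi_{E^c}G\|_p$, which is small for large $M$. Applying this to $-G$ and $iG$ gives $\int G(h_n)\to\int G(h)$. With your Step~1 in hand, this sequential route would make Lemma~\ref{L26}, Lemma~\ref{L27}, the shift by $Tg_2$ and the partition into the $C_i$ from the paper unnecessary. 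As written, though, your proposal does not prove the theorem.
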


In Section~3, we provide the following characterization which shows an application of weak$^*$ to weak continuity to study smooth points in higher order duals of $L^p(\mu, X)$ for $1<p<\infty$. For $p=1$, we give a counter example (see Remark~\ref{R34})

\begin{thm}[Section~3, Theorem~3.1]
    Let $X$ be a Banach space such that $X^*$ is separable and let $f\in S(L^{p}(\mu, X))$. If $f$ is a very smooth point of $L^{p}(\mu, X)$, then $\frac{f(t)}{\|f(t)\|}$ is a very smooth point of $X$ for almost every $t\in [0, 1]$ with $f(t)\neq 0$.
\end{thm}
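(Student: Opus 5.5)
The plan is to reduce to Theorem~2.5 via Lemma~\ref{L15}. Since $X^*$ is separable, $X^*$ has the RNP, so $L^{p}(\mu,X)^*=L^{q}(\mu,X^*)$. Let $f\in S(L^{p}(\mu,X))$ be very smooth. In particular $f$ is smooth. Define
\[
h(t)=\|f(t)\|^{p-1}x_t^*
\]
for $f(t)\neq0$, where $x_t^*$ is a norming functional of $f(t)/\|f(t)\|$, chosen measurably, and set $h(t)=0$ elsewhere. Then $\|h\|_q=1$ and $\langle h,f\rangle=1$. So $h$ is the unique norming functional of $f$.

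For the measurable choice of $x_t^*$ I would use separability of $X^*$: take a countable dense set $\{x_n^*\}$ in $S(X^*)$ and use a measurable selection theorem (Kuratowski--Ryll-Nardzewski) for the closed-valued multifunction $t\mapsto\{x^*\in X_1^*: x^*(f(t))=\|f(t)\|\}$. Alternatively, I would simply cite \cite[Theorem~1.1]{DK}, which under separability of $X^*$ gives that $f(t)/\|f(t)\|$ is smooth a.e.\ on $\{f\neq0\}$. In that case $x_t^*$ is unique, and its measurability follows from the uniqueness.

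By Lemma~\ref{L15}, (i)$\Rightarrow$(iv) applied in the space $L^{p}(\mu,X)$, the norming functional $h$ is a w$^*$-w PC of $S(L^{q}(\mu,X^*))$. Theorem~2.5 (valid for $1\le p<\infty$ with $X^*$ separable) then gives that
\[
\frac{h(t)}{\|h(t)\|}=x_t^*
\]
is a w$^*$-w PC of $S(X^*)$ for almost every $t$ with $h(t)\neq0$, that is, with $f(t)\neq0$.

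Next, for a.e.\ such $t$, the point $f(t)/\|f(t)\|$ is smooth in $X$ with unique norming functional $x_t^*$, by the smoothness result of \cite{DK} noted above. So Lemma~\ref{L15}, (iv)$\Rightarrow$(i), now applied in $X$, shows that $f(t)/\|f(t)\|$ is very smooth.

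The main obstacle is the smoothness of $f(t)/\|f(t)\|$ a.e., which Lemma~\ref{L15} requires as a standing hypothesis. If I cannot rely on \cite{DK}, I would prove it directly. Suppose that on a set $A$ of positive measure there are two distinct norming functionals. Using separability of $X^*$, cover the set of such pairs by countably many pieces $\{t: \|x_t^*-y_t^*\|>\varepsilon\}$ and select measurable $y_t^*$ on one of these pieces, still of positive measure. Then set $h'=h$ off $A$ and $h'=\|f\|^{p-1}y_t^*$ on $A$. This $h'$ is a second norming functional of $f$, contradicting the smoothness of $f$.
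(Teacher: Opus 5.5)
Your proposal is correct and follows essentially the same route as the paper. You use Lemma~\ref{L15} to make the unique norming functional of $f$ a \textnormal{w}$^*$-\textnormal{w} PC, then Theorem~\ref{T59} to pass to $\frac{h(t)}{\|h(t)\|}$ almost everywhere, then pointwise smoothness of $\frac{f(t)}{\|f(t)\|}$ a.e.\ to identify $\frac{h(t)}{\|h(t)\|}$ as its unique norming functional, and finally Lemma~\ref{L15} again in $X$. The differences are minor: you write down the norming functional $\|f\|^{p-1}x_t^*$ explicitly via the equality case of H\"older rather than quoting the pointwise identification from the literature, and you let Theorem~\ref{T59} absorb the zero set of $f$ instead of splitting into the cases $p=1$ and $1<p<\infty$ with $\mu(Z)>0$ as the paper does.
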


    \section{\texorpdfstring{Weak$^*$-weak continuity in the dual unit ball of $L^{p}(\mu, X)$}{Continuity in the dual unit ball of Lp(mu, X)}}
 In this section, for $1< p< \infty$, we establish a characterization of \textnormal{w}$^*$-\textnormal{w} PCs of the dual unit ball of $L^p(\mu, X)_{1}^*$ when $X^*$ is separable. Let $\tau$ denote either the weak or the weak$^{*}$ topology on $X^{*}$. For $A \subseteq X^{*}$, we write $\overline{A}^{\tau}$ for the $\tau$-closure of $A$ in $X^{*}$ and $\overline{A}$ for the norm closure of $A$ in $X^*$. For a subset $B$ of a topological space $K$, we denote by $B^{o}$ the interior of $B$ in $K$. Let $X$ be a Banach space and let $K\subseteq X$ be a subset of $X$. We write $\operatorname{co}(K)$ to denote the convex hull of $K$. 
 
 The following lemma is an interesting application of the Hahn-Banach separation theorem. We include its proof for the sake of completeness.
\begin{lem}\label{L58}
    Let $(y_{k})\in X_{1}$ be a sequence and let $\varepsilon>0$. Then, there exists an element $x^*\in X_{1}^*$ such that $x^*(y_{k})\geq\varepsilon$ for each $k\in\mathbb{N}$ if and only if $\big\|\sum\lambda_{k}y_{k}\big\|\geq\varepsilon$ for all finitely supported sequences $\lambda=(\lambda_{k})_{k\ge1}\in \mathbb{Q}^{+}$ with $\sum_{k} \lambda_{k}=1$.
\end{lem}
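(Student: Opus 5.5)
The forward direction is immediate from the triangle inequality. Suppose $x^*\in X_1^*$ satisfies $x^*(y_k)\ge\varepsilon$ for every $k$, and let $\lambda=(\lambda_k)$ be a finitely supported sequence of nonnegative rationals with $\sum_k\lambda_k=1$. Then
\[
\Big\|\sum_k\lambda_k y_k\Big\|\;\ge\;\operatorname{Re}x^*\Big(\sum_k\lambda_k y_k\Big)\;=\;\sum_k\lambda_k\operatorname{Re}x^*(y_k)\;\ge\;\varepsilon .
\]
In the complex case we interpret ``$x^*(y_k)\ge\varepsilon$'' as saying that $x^*(y_k)$ is real and at least $\varepsilon$; the computation is unchanged.

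For the converse I would separate the open ball from the convex hull. Set $C=\overline{\operatorname{co}}\{y_k:k\in\mathbb{N}\}$, the norm closure of the convex hull.
\begin{enumerate}[label=(\arabic*)]
\item \emph{Pass from rational to real coefficients.} Every element of $\operatorname{co}\{y_k\}$ is a finite convex combination with real coefficients. Those coefficients can be approximated by nonnegative rationals with the same finite support and sum $1$, and since $\|y_k\|\le1$ the corresponding combinations converge in norm. Hence the hypothesis gives $\|z\|\ge\varepsilon$ for every $z\in\operatorname{co}\{y_k\}$, and by continuity of the norm for every $z\in C$.
\item \emph{Separate.} The set $C$ is closed and convex, and it is disjoint from the open convex set $U=\{z:\|z\|<\varepsilon\}$. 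The Hahn--Banach separation theorem, applied to the underlying real space, gives a real functional $\varphi$ with $\|\varphi\|=1$ and $\sup_U\varphi\le\inf_C\varphi$. Since $\sup_U\varphi=\varepsilon\|\varphi\|=\varepsilon$, this yields $\varphi(y_k)\ge\varepsilon$ for all $k$.
\item \emph{Complex case.} Take $x^*(z)=\varphi(z)-i\varphi(iz)$, so that $\operatorname{Re}x^*=\varphi$ and $\|x^*\|=\|\varphi\|$. This gives $\operatorname{Re}x^*(y_k)\ge\varepsilon$ with $x^*\in X_1^*$, which is the natural reading of the conclusion in the complex setting.
\end{enumerate}

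The only delicate point is step (2). We need both that the separating functional has norm exactly $1$ and that it achieves the bound $\varepsilon$ on $C$ itself, not merely some bound $\varepsilon-\delta$. The cleanest route is to separate the open ball $U$ from the convex set $\operatorname{co}\{y_k\}$, which needs no closedness because $U$ is open with nonempty interior. Normalising $\|\varphi\|=1$ then gives $\sup_U\varphi=\varepsilon$, and so $\varphi(y_k)\ge\varepsilon$ exactly. Step (1) is routine density, and it is needed only to pass from the rational-coefficient hypothesis to all real convex combinations.
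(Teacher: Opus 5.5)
Your proof is correct and follows the same route as the paper: separate the open ball $\varepsilon X_1^{o}$ from $\overline{\operatorname{co}}\{y_k\}$ by Hahn--Banach, then normalize using $\sup_{\varepsilon X_1^{o}}\varphi=\varepsilon\|\varphi\|$. Your step (1), passing from rational to real convex coefficients, makes explicit a point the paper leaves implicit when it asserts $\varepsilon X_1^{o}\cap C=\emptyset$. Your reading of the conclusion as $\operatorname{Re}x^*(y_k)\ge\varepsilon$ in the complex case is also the right one: the literal version, with $x^*(y_k)$ real, fails in general (take $X=\mathbb{C}$, $y_1=1$, $y_2=e^{i\theta}$).
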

\begin{proof}
    It is easy to see the forward implication. For the converse, let $C=\overline{\text{co}}\big(\{y_{k}:k\in\mathbb{N}\}\big)$. By the hypothesis, we have that $\varepsilon X_{1}^{o}\cap C=\emptyset$. Since $\varepsilon X_{1}^{o}$ is open convex and $C$ is convex, by applying the Hahn-Banach separation theorem (see \cite[Chapter~3, Theorem~3.4]{RW}), there exists an element $x^*\in X^*$ such that $x^*(u)<\alpha\leq x^*(y)$ for all $y\in C$ and $u\in \varepsilon X_{1}^{o}$. Taking supremum over all elements $u\in \varepsilon X_{1}^{o}$, we get that $\varepsilon\|x^*\|\leq \alpha$. This implies that $\frac{x^*}{\|x^*\|}(y_{k})\geq\varepsilon$ for each $k\in\mathbb{N}$. This completes the proof.
\end{proof}
\begin{lem}\label{L56}
    Let $X$ be a separable Banach space and let $U(X)$ be the set of all \textnormal{w}$^*$-\textnormal{w} PCs of $S(X^*)$. Then $U(X)$ is a $G_\delta$-set in $X_{1}^*$ in the \textnormal{w}$^*$-topology. 
\end{lem}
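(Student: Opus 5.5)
The plan is to write $U(X)$ as a countable intersection of relatively $\textnormal{w}^*$-open subsets of $X_{1}^*$. Throughout we work in the $\textnormal{w}^*$-compact ball $X_1^*$. Since $X$ is separable, $X_1^*$ is $\textnormal{w}^*$-metrizable. Hence it has a countable base $\{W_m\}$ of relatively $\textnormal{w}^*$-open sets, and continuity at a point can be tested along sequences.

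\textbf{The sphere is a $G_\delta$.} The norm is $\textnormal{w}^*$-lower semicontinuous, so each set $\{x^*\in X_1^*:\|x^*\|>1-\frac1n\}$ is $\textnormal{w}^*$-open. Their intersection is $S(X^*)$, so $S(X^*)$ is a $\textnormal{w}^*$-$G_\delta$ in $X_1^*$. It therefore suffices to exhibit $\textnormal{w}^*$-open sets $G_n$ with $U(X)=S(X^*)\cap\bigcap_n G_n$.

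\textbf{Sequential reformulation via Lemma~\ref{L58}.} First, $x^*\in S(X^*)$ fails to be a $\textnormal{w}^*$-\textnormal{w} PC of $S(X^*)$ if and only if there is a sequence $(y_k^*)\subseteq S(X^*)$ with $y_k^*\to x^*$ in the $\textnormal{w}^*$-topology but not weakly. Passing to a subsequence and multiplying by a scalar, there are then $x^{**}\in X^{**}_1$ and $\varepsilon>0$ with $\operatorname{Re}x^{**}(y_k^*-x^*)\ge\varepsilon$ for all $k$. Next apply Lemma~\ref{L58} in $X^*$ to $z_k=(y_k^*-x^*)/2\in X^*_1$. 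The existence of such an $x^{**}$ is equivalent to $\|\sum\lambda_k(y_k^*-x^*)\|\ge 2\varepsilon$ for all finitely supported rational convex combinations. This yields the criterion
\[
x^*\in U(X)\iff x^*\in S(X^*)\text{ and, for every }n\text{ and every sequence }(y_k^*)\subseteq S(X^*)\text{ with }y_k^*\xrightarrow{\textnormal{w}^*}x^*,
\]
\[
\text{some rational convex combination satisfies }\Big\|\sum\lambda_k y_k^*-x^*\Big\|<\tfrac1n .
\]
The advantage of this form is that it uses only norms of convex combinations, not elements of the non-separable space $X^{**}$.

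\textbf{Definition of $G_n$.} To remove the dependence on the base point $x^*$, I would phrase the condition in terms of differences of points lying in a small set. Let $G_n$ be the union of all basic sets $W_m$ with the following property $(P_n)$: for every sequence $(y_k^*)\subseteq W_m\cap S(X^*)$ and every $z^*\in W_m\cap S(X^*)$, some rational convex combination satisfies $\|\sum\lambda_k y_k^*-z^*\|<\frac1n$. Each $G_n$ is open as a union of basic open sets. Two inclusions then need to be checked.

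\textbf{Inclusion $S(X^*)\cap\bigcap_n G_n\subseteq U(X)$.} Take $x^*$ in the left-hand side and a sequence $y_k^*\to x^*$ (in $\textnormal{w}^*$) in $S(X^*)$. For a fixed $n$, pick $W_m\ni x^*$ with property $(P_n)$; its tail lies in $W_m$. Applying $(P_n)$ with $z^*=x^*$ gives a small convex combination, and running this over all $n$ verifies the criterion. One should double-check that a subsequence argument does not interfere with the quantifiers.

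\textbf{Inclusion $U(X)\subseteq\bigcap_n G_n$.} This is the main obstacle. Suppose $x^*\in U(X)$ but no basic $W_m\ni x^*$ has property $(P_n)$. Take a nested base $W_{m_j}\downarrow\{x^*\}$. For each $j$, there are $z_j^*\in W_{m_j}\cap S(X^*)$ and $(y^*_{j,k})_k\subseteq W_{m_j}\cap S(X^*)$ with all convex combinations of $y^*_{j,k}-z_j^*$ of norm $\ge\frac1n$. By Lemma~\ref{L58}, there are $x_j^{**}\in X_1^{**}$ with $\operatorname{Re}x_j^{**}(y^*_{j,k}-z_j^*)\ge\frac1{2n}$ for all $k$.

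Since $z_j^*\to x^*$ and $y^*_{j,k}\to x^*$ in $\textnormal{w}^*$ as $j\to\infty$ (uniformly in $k$), the PC property should force $\|x^*-z_j^*\|_{\textnormal{w}}$-type estimates. The difficulty is that $x_j^{**}$ varies with $j$, so weak convergence of the diagonal sequence alone does not immediately contradict this.

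What I would try first is to combine the estimates for different $j$ into a single sequence. I would form the sequence $y^*_{1,1},z_1^*,y^*_{2,1},z_2^*,\dots$, which converges to $x^*$ in the $\textnormal{w}^*$-topology and hence weakly, since $x^*\in U(X)$. From weak convergence of a sequence I would then extract, by Mazur's lemma, convex combinations of its tail converging in norm to $x^*$. The goal is to obtain, for some large $j$, convex combinations of $y^*_{j,k}$ and a point close to $z_j^*$ within $\frac1n$, contradicting the choice of $x_j^{**}$.

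If this diagonal step fails, the fallback is to redefine $(P_n)$ using weak oscillation against a countable norming set for $X^*$. This replaces $\frac1n$ by $\frac1{2n}$ and uses Lemma~\ref{L58} only through the criterion above.
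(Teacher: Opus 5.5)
\textbf{There is a genuine gap: your definition of $G_n$ characterizes the wrong class of points.}

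Your preliminary steps are sound. $S(X^*)$ is a $\textnormal{w}^*$-$G_\delta$ in $X_1^*$. The sequential criterion you derive from Lemma~\ref{L58} and Mazur's theorem is correct, and so is the inclusion $S(X^*)\cap\bigcap_nG_n\subseteq U(X)$.

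The proof breaks at the definition of $G_n$. Property $(P_n)$ quantifies over \emph{every} sequence in $W_m\cap S(X^*)$, constant ones included. For $y_k^*\equiv y^*$ it says $\|y^*-z^*\|<\frac1n$ for all $y^*,z^*\in W_m\cap S(X^*)$. Conversely, norm-diameter $<\frac1n$ gives $(P_n)$ trivially. So $S(X^*)\cap\bigcap_nG_n$ is exactly the set of $\textnormal{w}^*$-$\|\cdot\|$ PCs of $S(X^*)$; in effect you have proved the Remark following the lemma. The inclusion $U(X)\subseteq\bigcap_nG_n$, which you call the main obstacle, is therefore false, not merely hard.

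For a counterexample, take $X=\ell^2\oplus_1\mathbb{R}$. It is separable and reflexive, so $U(X)=S(X^*)$, where $X^*=\ell^2\oplus_\infty\mathbb{R}$. Put $x^*=(0,1)$ and $y_j^*=(e_j,1)\in S(X^*)$. Then $y_j^*\to x^*$ weak$^*$, but $\|y_j^*-x^*\|=1$. Every $\textnormal{w}^*$-neighbourhood of $x^*$ therefore fails $(P_1)$ (use a constant sequence $y_j^*$ and $z^*=x^*$), so $x^*\in U(X)\setminus G_1$.

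The same example shows why the diagonal/Mazur step cannot be completed. The witnesses are $z_j^*=x^*$ and constant sequences $(e_{i_j},1)$, with separating functionals $(e_{i_j},0)\in X^{**}$ that vary with $j$. The interleaved sequence does converge weakly to $x^*$. Yet for each fixed $j$ the convex hull of $\{y_{j,k}^*\}_k$ stays at distance $1$ from $z_j^*$.

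\textbf{The fallback does not repair this in general, and the paper takes a different route.} A countable norming set for $X^*$ can be taken inside $X$ (a dense subset of $S(X)$), and oscillation against elements of $X$ vanishes identically on $(X_1^*,\textnormal{w}^*)$. Countably many test functionals determine the weak topology on bounded subsets of $X^*$ only when their closed span is $X^{**}$.

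When $X^{**}$ is separable, your oscillation idea does prove the lemma. For $d$ in a countable norm-dense $D\subseteq X^{**}$, put $\operatorname{osc}_d(x^*)=\inf_W\sup\{|d(y^*)-d(z^*)|:y^*,z^*\in W\cap S(X^*)\}$, the infimum taken over $\textnormal{w}^*$-neighbourhoods $W$ of $x^*$. Each $\{\operatorname{osc}_d<\frac1n\}$ is $\textnormal{w}^*$-open, and $U(X)=S(X^*)\cap\bigcap_{d,n}\{\operatorname{osc}_d<\frac1n\}$.

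In general, however, the weak oscillation is $\sup_{x^{**}\in X_1^{**}}\operatorname{osc}_{x^{**}}$, a supremum of upper semicontinuous functions. A condition of the form ``some neighbourhood of $x^*$ has property $P$'' effectively interchanges that supremum with the infimum over neighbourhoods, which yields the norm oscillation. That is exactly what happened with $(P_n)$.

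The paper avoids neighbourhood conditions altogether. It keeps the existential quantifier over sequences anchored at the point. It writes the complement of $U(X)$ as $\bigcup_{\varepsilon\in\mathbb{Q}^+}\pi(C_\varepsilon)$, where $C_\varepsilon$ consists of pairs $(x^*,(z_k))$ with $d(z_k,x^*)\le\frac1k$ and $\|\sum_k\lambda_k(z_k-x^*)\|\ge\varepsilon$ for all $\lambda\in\Lambda$, and then projects.

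Be aware that the paper's claim that $C_\varepsilon$ is closed relies on $\textnormal{w}^*$-continuity of $(x^*,(z_n))\mapsto\|\sum_n\lambda_n(z_n-x^*)\|$, which fails (e.g.\ $e_n\to0$ weak$^*$ in $\ell^1=c_0^*$). The dual norm is only $\textnormal{w}^*$-lower semicontinuous, so $C_\varepsilon$ is a $G_\delta$. As written, that route shows only that $S(X^*)\setminus U(X)$ is analytic. This is enough for the measurability used in Theorem~\ref{T59}, but it does not give the $G_\delta$ conclusion.
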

\begin{proof}
    Since $X$ is separable, we have that $K=(X_{1}^*, \textnormal{w}^*)$ is a compact metric space and hence, $K\times K^{\mathbb{N}}$ is a compact space. An element $x^*\in S(X^*)$ belongs to $U(X)$ if and only if there exists a sequence $(z_{k}^*)\in X_{1}^*$ such that $z_{k}^*$ converges to $x^*$ in the \textnormal{w}$^*$-topology, but the convergence does not hold in the w-topology. Let $\Lambda$ denote the set of all finitely supported sequences
$\lambda=(\lambda_{k})_{k\ge1}$ of non-negative rational numbers with
$\sum_{k}\lambda_{k}=1$; that is,
\[
  \Lambda=\Bigl\{(\lambda_{k})\in\mathbb{Q}^{\mathbb{N}}\ :\ \lambda_{k}\ge0\ \ \forall k,\quad
  \operatorname{supp}((\lambda_{k}))\ \text{is finite},\quad \sum_{k}\lambda_{k}=1\Bigr\}.
\]
Then $\Lambda$ is countable, and for any bounded sequence $(y_{k})_{k\ge1}$ in
$X^{*}$ the sum $\sum_{k}\lambda_{k}y_{k}$ is a finite convex combination. For each $\varepsilon\in \mathbb{Q}^{+}$, we define
\[
  C_{\varepsilon}=\left\{\bigl(x^{*},(z_{k})\bigr)\in K\times K^{\mathbb{N}}\ :\
  \begin{aligned}
    &d(z_{k},x^{*})\le\tfrac{1}{k}\quad\text{for all }k\ge1,\\
    &\Bigl\|\sum_{k}\lambda_{k}(z_{k}-x^{*})\Bigr\|\ge\varepsilon
      \quad\text{for all }(\lambda_{k})\in\Lambda
  \end{aligned}
  \right\}.
\]
It is easy to see that for each fixed $k\in\mathbb{N}$ and $(\lambda_{k})\in \Lambda$, the mappings $(x^{*},(z_{n}))\mapsto d(z_{k},x^{*})$ and $(x^{*},(z_{n}))\mapsto\big\|\sum_{n}\lambda_{n}(z_{n}-x^{*})\big\|$ defined on $K\times K^{\mathbb{N}}$ are continuous. Therefore, $C_{\varepsilon}$ is a closed subset of $K\times K^{\mathbb{N}}$, and hence it is a compact set. Let $\pi:K\times K^{\mathbb{N}}\to K$ be the projection onto the first coordinate. Then, $\pi(C_{\varepsilon})$ is compact in $K$. Hence, $\pi(C_{\varepsilon})$ is a \textnormal{w}$^*$-compact subset of $X_{1}^*$. By Lemma~\ref{L58}, it is easy to see that $X_{1}^*\setminus U(X)=\bigcup_{\varepsilon\in\mathbb{Q}^{+}}\pi(C_{\varepsilon})$. Hence, $U(X)=\bigcap_{\varepsilon\in\mathbb{Q}^{+}}\pi(C_{\varepsilon})^c$, where each $\pi(C_{\varepsilon})^c$ is a \textnormal{w}$^*$-open subset of $X_{1}^*$. This completes the proof.
\end{proof}

\begin{rem}
Using an argument similar to the one used in the proof of the above lemma, we get that set of all \textnormal{w}$^*$-$\|\cdot\|$ PCs of $S(X^*)$ forms a $G_{\delta}$-set in $X_{1}^*$ in the \textnormal{w}$^*$-topology.
\end{rem}

Let $Y$ be a closed subspace of $X$. In general, there is no connection between the \textnormal{w}$^*$-\textnormal{w} PCs of $S(Y^*)$  and of $S(X^*)$ (see \cite[Theorem~8.2]{SMR}). We recall from \cite[Chapter~1]{HWW} that $Y$ is said to be an $M$-ideal in $X$ if there exists a norm one projection $P:X^*\to X^*$ such that $P(X^*)=Y^{\perp}$ and $\|x^*\|=\|P(x^*)\|+\|x^*-P(x^*)\|$ for all $x^*\in X^*$. In this case, we have $X^*=Y^{\perp}\oplus_{1}Y^*$, where $Y^{\perp}=\{x^*\in X^*:x^*(y)=0 \text{ for all } y\in Y\}$. It is well known that if $Y$ is an $M$-ideal in $X$, then a point $y^*\in S(Y^*)$ that is a \textnormal{w}$^*$-\textnormal{w} PC of $S(Y^*)$, is a \textnormal{w}$^*$-\textnormal{w} PC of $S(X^*)$ (see \cite[Theorem~6.2]{SMR}). In the following lemma, we extend this result for $\ell^p$-summands. 

For $1\leq p<\infty$ and Banach spaces $Y$ and $Z$, $Y\oplus_{p}Z$ denotes the $\ell^{p}$-direct sum of $Y$ and $Z$, equipped with the norm $\|(y, z)\|_{p}=\big(\|y\|^{p}+\|z\|^{p}\big)^{1/p}$. By duality, we have $X^*=Y^{^*}\oplus_{q}Z^*$. For a point $y^*\in S(Y^*)$, we denote by $(y^*, 0)$ the point in $S(X^*)$ defined by $(y^*, 0)(y, z)=y^*(y)$ for all $(y, z)\in Y\oplus_{p}Z$.

\begin{lem}\label{L57}
    Let $Y\subseteq X$ be a closed subspace of $X$ such that $X=Y\oplus_{p}Z$. Then, a point $y^*\in Y_{1}^*$ is a \textnormal{w}$^*$-\textnormal{w} PC of $S(Y^*)$ if and only if $(y^*, 0)\in X_{1}^*$ is a \textnormal{w}$^*$-\textnormal{w} PC of $S(X^*)$.
\end{lem}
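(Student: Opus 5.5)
We use the identification $X^*=Y^*\oplus_q Z^*$, under which the weak$^*$ topology of $X^*$ is the product of the weak$^*$ topologies of $Y^*$ and $Z^*$. Likewise the weak topology of $X^*$ is the product of the weak topologies, since $X^{**}=Y^{**}\oplus_p Z^{**}$. Thus a net $(y_\alpha^*,z_\alpha^*)$ converges w$^*$ (resp.\ w) if and only if both coordinates do. We work with nets throughout, since no separability is assumed.

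For the forward direction, suppose $y^*$ is a w$^*$-w PC of $S(Y^*)$. Let $(y_\alpha^*,z_\alpha^*)\in S(X^*)$ converge w$^*$ to $(y^*,0)$. Then $y_\alpha^*\to y^*$ w$^*$ and $z_\alpha^*\to 0$ w$^*$.

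The key step is to show that $\|z_\alpha^*\|\to 0$ in norm. By w$^*$-lower semicontinuity of the norm, $\liminf\|y_\alpha^*\|\ge\|y^*\|=1$. Since $\|y_\alpha^*\|^q+\|z_\alpha^*\|^q=1$, we also have $\|y_\alpha^*\|\le 1$. Hence $\|y_\alpha^*\|\to1$ and $\|z_\alpha^*\|\to0$; here we use $q<\infty$, i.e.\ $p>1$.

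Eventually $y_\alpha^*\neq0$, so we may set $u_\alpha^*=y_\alpha^*/\|y_\alpha^*\|\in S(Y^*)$. Then $u_\alpha^*-y_\alpha^*\to0$ in norm, so $u_\alpha^*\to y^*$ w$^*$. By hypothesis $u_\alpha^*\to y^*$ weakly, and therefore $y_\alpha^*\to y^*$ weakly. Together with $z_\alpha^*\to0$ in norm, this gives $(y_\alpha^*,z_\alpha^*)\to(y^*,0)$ weakly.

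For $p=1$ the norm is $\max$, so $z_\alpha^*$ need not be small. That case must be treated separately, in the $M$-ideal spirit of \cite[Theorem~6.2]{SMR}, and it is the main obstacle. For $p=1$ the dual $X^*=Y^*\oplus_\infty Z^*$, so the points $(y^*,z^*)$ with $\|z^*\|\le1$ all lie on the sphere. A net $z_\alpha^*\to0$ w$^*$ but not weakly then breaks the forward implication unless $Z$ is reflexive. I would therefore restrict the claim to $1<p<\infty$, as is the paper's standing regime.

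For the converse, suppose $(y^*,0)$ is a w$^*$-w PC of $S(X^*)$. Let $y_\alpha^*\in S(Y^*)$ with $y_\alpha^*\to y^*$ w$^*$. The net $(y_\alpha^*,0)$ lies in $S(X^*)$ and converges w$^*$ to $(y^*,0)$, so it converges weakly. Projecting to the first coordinate, a weakly continuous map, gives $y_\alpha^*\to y^*$ weakly. This direction is routine and works for all $p$.
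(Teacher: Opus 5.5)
Your proof is correct and follows the same route as the paper: coordinatewise convergence, then w$^*$-lower semicontinuity of the norm forcing $\|y_\alpha^*\|\to 1$ and hence $\|z_\alpha^*\|\to 0$, then the routine converse via the net $(y_\alpha^*,0)$. The one difference is that you normalize to $u_\alpha^*=y_\alpha^*/\|y_\alpha^*\|\in S(Y^*)$ before invoking the hypothesis; the paper instead applies the \textnormal{w}$^*$-\textnormal{w} PC property of $S(Y^*)$ directly to a net lying only in $Y_1^*$, tacitly using the equivalence from \cite{SMR} recalled in the introduction, and passes to a subnet that is not needed.

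Your caveat about $p=1$ is also right. The paper's proof silently uses $\|y_\alpha^*\|^q+\|z_\alpha^*\|^q=1$, and in $X=\mathbb{R}\oplus_1 c_0$ the functionals $(1,e_n)\in S(X^*)$ converge \textnormal{w}$^*$ but not weakly to $(1,0)$, so the forward implication genuinely fails for $p=1$. This is harmless for the paper, since its only use of the lemma with $p=1$ (in the proof of Theorem~\ref{T59}) is the converse direction, which holds for every $p$.
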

\begin{proof}
   
    Let $y^*\in S(Y^*)$ be a \textnormal{w}$^*$-\textnormal{w} PC of $S(Y^*)$. Let $(y_{\alpha}^*, z_{\alpha}^*)\in S(X^*)$ be a net converging to $(y^*, 0)$ in the \textnormal{w}$^*$-topology of $X^*$. Then, for each $y\in Y$, we have $y_{\alpha}^*(y)\to y^*(y)$. As $\|(y_{\alpha}^*, z_{\alpha}^*)\|_{q}=1$, we have $\|y_{\alpha}^*\|\leq 1$. Since $y^*$ is a \textnormal{w}$^*$-\textnormal{w} PC of $S(Y^*)$, we have that $y_{\alpha}^*\to y^*$ in the w-topology of $Y$. By the \textnormal{w}$^*$-lower semicontinuity of the norm, we have $1=\|y^*\|\leq \liminf_{\alpha}\|y_{\alpha}^*\|\leq 1$. Therefore, by passing on to a subnet if necessary, we may assume that $\|y_{\alpha}^*\|\to 1$. Consequently, we have $\|z_{\alpha}^*\|\to 0$ and hence $z_{\alpha}^*\to 0$ in the w-topology of $Z$. Let $(y^{**}, z^{**})\in Y^{**}\oplus_{p}Z^{**}$. Then, $(y^{**}, z^{**})(y_{\alpha}^*, z_{\alpha}^*)=y^{**}(y_{\alpha}^*)+z^{**}(z_{\alpha}^*)\to y^{**}(y^*)$. This shows that $(y_{\alpha}^*, z_{\alpha}^*)\to (y^*, 0)$ in the w-topology of $X^*$. Hence, $(y^*, 0)$ is a \textnormal{w}$^*$-\textnormal{w} PC of $S(X^*)$.
    
    Conversely, let $(y^*, 0)\in S(X^*)$ be a \textnormal{w}$^*$-\textnormal{w} PC of $S(X^*)$. Let $(y_{\alpha}^*)\in S(Y^*)$ be a net converging to $y^*$ in the \textnormal{w}$^*$-topology of $Y^*$. Then, for each $(y, z)\in Y\oplus_{p}Z$, we have $(y_{\alpha}^*, 0)(y, z)=y_{\alpha}^*(y)\to y^*(y)=(y^*, 0)(y, z)$. This shows that $(y_{\alpha}^*, 0)\to (y^*, 0)$ in the \textnormal{w}$^*$-topology of $X^*$. Since $(y^*, 0)$ is a \textnormal{w}$^*$-\textnormal{w} PC of $S(X^*)$, we have that $(y_{\alpha}^*, 0)\to (y^*, 0)$ in the w-topology of $X^*$ and hence $y_{\alpha}^*\to y^*$ in the w-topology of $Y^*$. Hence, $y^*$ is a \textnormal{w}$^*$-\textnormal{w} PC of $S(Y^*)$. This completes the proof.
\end{proof}

Throughout, $([0,1], \mathcal{A}, \mu)$ denotes the Lebesgue measure space, where $\mathcal{A}$ is the $\sigma$-field of Lebesgue measurable sets and $\mu$ is Lebesgue measure. As this measure space is complete, we regard elements of the associated $L^{p}$-spaces as functions defined at every point of $[0,1]$. Let $V$ be a Polish space, that is, a separable and completely metrizable topological space. We write $\mathcal{B}_{V}$ for the Borel $\sigma$-algebra of $V$ and $\mathcal{A} \otimes \mathcal{B}_{V}$ for the product $\sigma$-algebra on $[0,1] \times V$ (see \cite[pp.~83, 87]{S}).

Let $1 < p < \infty$, let $q$ be its conjugate exponent, and let $X$ be a Banach space whose dual $X^{*}$ has the RNP, so that $L^{p}(\mu, X)^{*} = L^{q}(\mu, X^{*})$. Fix $h \in S(L^{p}(\mu, X)^{*})$ and put $Z = \{t \in [0,1] : h(t) = 0\}$. Suppose that $\mu(Z) > 0$. Then $L^{p}(\mu, X) = L^{p}(Z, X) \oplus_{p} L^{p}(Z^{c}, X)$, and since $h$ vanishes on $Z$, we may regard $h$ as an element of $S(L^{p}(Z^{c}, X)^{*})$. By Lemma~\ref{L57}, $h$ is a \textnormal{w}$^{*}$-\textnormal{w} PC of $S(L^{p}(\mu, X)^{*})$ if and only if it is a \textnormal{w}$^{*}$-\textnormal{w} PC of $S(L^{p}(Z^{c}, X)^{*})$.

In the following theorem, we show that if $h$ is a \textnormal{w}$^*$-\textnormal{w} PC of $S(L^{p}(\mu, X)^*)$, then $\frac{h(t)}{\|h(t)\|}$ is a \textnormal{w}$^*$-\textnormal{w} PC of $S(X^*)$ for almost every $t\in [0, 1]$ with $h(t)\neq 0$. For $f\in L^{p}(\mu, X)$, we define $|f|:\Omega\to \mathbb{R}$ by $|f|(t)=\|f(t)\|$. Clearly, $|f|\in L^{p}(\mu)$.
\begin{thm}\label{T59}
    Let $X$ be a Banach space such that $X^*$ is separable. If $h\in L^{p}(\mu, X)_{1}^*$ is a \textnormal{w}$^*$-\textnormal{w} PC of $ S\big(L^{p}(\mu, X)^*\big)$, then $\frac{h(t)}{\|h(t)\|}$ is a \textnormal{w}$^*$-\textnormal{w} PC of $S(X^*)$ for almost every $t\in [0, 1]$ with $h(t)\neq 0$.
\end{thm}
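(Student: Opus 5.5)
We argue by contraposition. Suppose the set $B=\{t: h(t)\neq 0,\ \frac{h(t)}{\|h(t)\|}\notin U(X)\}$ has positive measure, where $U(X)$ is the set of \textnormal{w}$^*$-\textnormal{w} PCs of $S(X^*)$. We will build a net (in fact a sequence) in $S(L^{q}(\mu,X^*))$ converging weak$^*$ but not weakly to $h$. Since $X^*$ is separable, $X$ is separable, $K=(X_1^*,\textnormal{w}^*)$ is compact metric, and by Lemma~\ref{L56} $U(X)$ is $G_\delta$, hence Borel, in $K$. The map $t\mapsto h(t)/\|h(t)\|$ is strongly measurable, so $B$ is measurable. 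Writing $X_1^*\setminus U(X)=\bigcup_{\varepsilon\in\mathbb{Q}^+}\pi(C_\varepsilon)$ as in the proof of Lemma~\ref{L56}, we fix $\varepsilon$ with $B_\varepsilon=\{t\in B: h(t)/\|h(t)\|\in\pi(C_\varepsilon)\}$ of positive measure.

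\textbf{Step 1 (measurable selection).} The set $C_\varepsilon\subseteq K\times K^{\mathbb{N}}$ is compact metrizable, and $\pi|_{C_\varepsilon}$ is continuous. By a measurable selection theorem (Kuratowski--Ryll-Nardzewski, or von Neumann--Aumann using completeness of Lebesgue measure), we obtain a measurable map $t\mapsto (z_k(t))_k$ on $B_\varepsilon$ with
\[
\bigl(h(t)/\|h(t)\|,(z_k(t))\bigr)\in C_\varepsilon .
\]
Thus $z_k(t)\in X_1^*$, $z_k(t)\to h(t)/\|h(t)\|$ weak$^*$ (indeed $d\le 1/k$), and all convex combinations of $z_k(t)-h(t)/\|h(t)\|$ have norm $\ge\varepsilon$. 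Each $z_k$ is weak$^*$-measurable. Since $X^*$ is separable, Pettis' theorem gives strong measurability. We expect this step to be the main technical obstacle: we must make sure $C_\varepsilon$ is closed (so the fibres are compact) and that the resulting functions land in $L^q(\mu,X^*)$.

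\textbf{Step 2 (the perturbed functionals).} Define $h_k=h$ off $B_\varepsilon$ and $h_k(t)=\|h(t)\|\,z_k(t)$ on $B_\varepsilon$. Then $\|h_k(t)\|\le\|h(t)\|$, so $\|h_k\|_q\le 1$. Set $g_k=h_k/\|h_k\|$. For $f\in L^p(\mu,X)$, the integrand $\langle h_k(t)-h(t),f(t)\rangle\to0$ pointwise and is dominated by $2\|h(t)\|\|f(t)\|$, so $h_k\to h$ weak$^*$ by dominated convergence. By weak$^*$ lower semicontinuity, $\liminf\|h_k\|\ge1$, so $\|h_k\|\to1$ and $g_k\to h$ weak$^*$ in $S(L^q(\mu,X^*))$. (Here $q<\infty$, i.e. $p>1$, is used to identify the dual. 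In the case $p=1$ the same argument applies in $L^\infty$ with the essential-sup norm; alternatively, for $1<p<\infty$ one may use Lemma~\ref{L57} to reduce to $Z^c$.)

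\textbf{Step 3 (no weak convergence).} We show that no convex combination of the $g_k-h$ tends to $0$ in norm, which by Mazur's lemma rules out weak convergence of any subsequence. Since $\|h_k\|\to1$, it suffices to treat $h_k-h$. For $\lambda\in\Lambda$, pointwise on $B_\varepsilon$,
\[
\Bigl\|\sum\lambda_k(h_k-h)(t)\Bigr\|=\|h(t)\|\,\Bigl\|\sum\lambda_k\bigl(z_k(t)-h(t)/\|h(t)\|\bigr)\Bigr\|\ge\varepsilon\|h(t)\|.
\]
Hence $\bigl\|\sum\lambda_k(h_k-h)\bigr\|_q\ge\varepsilon\bigl(\int_{B_\varepsilon}\|h\|^q\bigr)^{1/q}>0$, uniformly in $\lambda$. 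By density, the same bound holds for real convex coefficients. So $0$ lies outside the norm closure of $\operatorname{co}\{h_k-h\}$, and the same holds for the tails $\{h_k-h: k\ge n\}$. Therefore $h_k\not\to h$ weakly, and the same is true of $g_k$, since $g_k-h_k\to0$ in norm. This contradicts $h$ being a \textnormal{w}$^*$-\textnormal{w} PC of $S(L^p(\mu,X)^*)$.
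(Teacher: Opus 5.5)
Your proposal is essentially the paper's own proof. It uses the same sets $C_\varepsilon$ from Lemma~\ref{L56}, a measurable selection $t\mapsto (z_k(t))_k$ on a positive-measure piece $B_\varepsilon$, the perturbations $h_k=\|h\|z_k$ on $B_\varepsilon$ and $h_k=h$ elsewhere, dominated convergence for $\textnormal{w}^*$-convergence, and the uniform lower bound on convex combinations to exclude weak convergence. Three of your choices are slightly cleaner than the paper's write-up: the normalization $g_k=h_k/\|h_k\|$, which keeps the sequence in the sphere; the correct $L^q$ bound $\varepsilon(\int_{B_\varepsilon}\|h\|^q\,d\mu)^{1/q}$; and working directly on $\{h\neq 0\}$ rather than reducing via Lemma~\ref{L57}.

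One correction concerns the step you flagged, and the paper's proof of Lemma~\ref{L56} makes the same claim: $C_\varepsilon$ is not closed. The map $(x^*,(z_n))\mapsto\|\sum_n\lambda_n(z_n-x^*)\|$ is only $\textnormal{w}^*$-lower semicontinuous. For example, in $\ell^1=c_0^*$ take $x^*=(1-\varepsilon)e_1$ and suitable increasing $m_k$. Then the points $(x^*,(x^*+\varepsilon e_{m_k+n})_k)$ lie in $C_\varepsilon$, but as $n\to\infty$ they tend to $(x^*,(x^*,x^*,\dots))\notin C_\varepsilon$. So the compact-fibre/Kuratowski--Ryll-Nardzewski route is unavailable.

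The argument survives, because $C_\varepsilon$ is a $G_\delta$. That is all the von Neumann--Aumann selection on the complete Lebesgue space needs, and it is the theorem the paper invokes. The measurable projection theorem then also gives measurability of each $B_\varepsilon$, without relying on compactness of $\pi(C_\varepsilon)$.
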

\begin{proof} Since $X^*$ is separable, by \cite[Theorem~III.3.1]{DU}, the dual space $X^*$ has the RNP. Therefore, the dual space $L^p(\mu, X)^*$ can be identified with the space $L^q(\mu, X^*$). Let $h\in L^{q}(\mu, X^*)_{1}$ be a \textnormal{w}$^*$-\textnormal{w} PC. Let $\Lambda$, $C_{\varepsilon}$ (for $\varepsilon\in\mathbb{Q^{+}})$, $U(X)$ and $\pi$ be as in the proof of Lemma~\ref{L56}. We first establish the proof when $\mu(Z)=0$. Without loss of generality, we may assume that $h(t)\neq 0$ for every $t\in [0, 1]$. 
    
     Suppose $A=\{t\in [0, 1]: \frac{h(t)}{\|h(t)\|}\in S(X^*)\textnormal{ is not a w$^*$-w PC}\}$. Clearly, $A=\Big(\frac{h}{|h|}\Big)^{-1}\Big(\big(X_{1}^*\setminus U(X)\big)\cap S(X^*)\Big)$. We first show that $A$ is a measurable subset of $[0, 1]$. For $n\in\mathbb{N}$, let $U_{n}=\big\{x^*\in X_{1}^*:\|x^*\|>1-\frac{1}{n}\big\}$. Since the norm on $X^*$ is \textnormal{w}$^*$-lower semicontinuous, it gives that each $U_{n}$ is \textnormal{w}$^*$-open subset of $X_{1}^*$. Clearly, $S(X^*)=\bigcap_{n\geq 1}U_{n}$. Thus $S(X^*)$ is a  \textnormal{w}$^*$-Borel subset of $X_{1}^*$. It follows from Lemma~\ref{L56} that $X_{1}^*\setminus U(X)$ is a \textnormal{w}$^*$-Borel set in $X_{1}^*$. Therefore, $\big(X_{1}^*\setminus U(X)\big)\cap S(X^*)$ is a \textnormal{w}$^*$-Borel subset of $X_{1}^*$. Since $\frac{h}{|h|}$ is \textnormal{w}$^*$-measurable, we have that $A$ is a measurable subset of $[0, 1]$.

    We now show that $\mu(A)=0$. Suppose not. Then $\mu(A)>0$. We construct a sequence in $L^{q}(\mu, X^*)_{1}$ converging to $h$ in the \textnormal{w}$^{*}$-topology but not weakly, contradicting the assumption that $h$ is a \textnormal{w}$^*$-\textnormal{w} PC. We have the following cases. 
    
    \begin{case} $p>1$.
    By Lemma~\ref{L56}, we have that $A\subseteq\bigcup_{\varepsilon\in\mathbb{Q}^{+}} \left( \frac{h}{|h|}\right)^{-1}\bigl(\pi(C_{\varepsilon})\bigr)$. Since $\mu(A)>0$, there exists an $\varepsilon\in\mathbb{Q}^{+}$ such that $\mu\left(\left( \frac{h}{|h|}\right)^{-1}\bigl(\pi(C_{\varepsilon})\bigr)\right)>0$. Let $B=\left( \frac{h}{|h|}\right)^{-1}\bigl(\pi(C_{\varepsilon})\bigr)$. Then, by the definition of $C_{\varepsilon}$, for each $t\in B$, there exists a sequence $(z_{k}^*(t))_{k\in \mathbb{N}}\subseteq X_{1}^*$ such that $z_{k}^*(t)\to \frac{h(t)}{\|h(t)\|}$ in the \textnormal{w}$^*$-topology but not in the \textnormal{w}-topology, that is, $\bigl(\frac{h(t)}{\|h(t)\|}, (z_{k}^*(t))\bigr)\in C_{\varepsilon}$. For $t\in B$, let $A_{t}=\{(z_{k}^*)\in K^{\mathbb{N}}: \Bigl(\frac{h(t)}{\|h(t)\|}, (z_{k}^*)\Bigr)\in C_{\varepsilon} \}$. Clearly, $A_{t}$ is a non-empty subset of $K^{\mathbb{N}}$. It is easy to check that for each fixed $t\in B$, the mapping $(z_{k}^*)\mapsto \Bigl(\frac{h(t)}{\|h(t)\|}, (z_{k}^*)\Bigr)$ defined on $K^{\mathbb{N}}$ is measurable. It is shown in Lemma~\ref{L56} that $C_{\varepsilon}$ is a closed subset of $K\times K^{\mathbb{N}}$. Therefore, for each $t\in B$, $A_{t}$ is a Borel subset of $K^{\mathbb{N}}$.

    We define a set-valued mapping $\Gamma: B\to 2^{K^{\mathbb{N}}}$ by $\Gamma(t)=A_{t}$. Then Graph$(\Gamma)=\Big\{(t, (z_{k}^*))\in B\times K^{\mathbb{N}}: \Bigl(\frac{h(t)}{\|h(t)\|}, (z_{k}^*)\Bigr)\in C_{\varepsilon} \Big\}$. An argument similar to the one used above gives that Graph$(\Gamma)$ is a measurable subset of $B \times K^{\mathbb{N}}$, since the mapping $(t, (z_{k}^*))\mapsto \Bigl(\frac{h(t)}{\|h(t)\|}, (z_{k}^*)\Bigr)$ is measurable and $C_{\varepsilon}$ is closed in $K\times K^{\mathbb{N}}$. By applying the von Neumann's selection theorem (see \cite[Chapter~5, Corollary~5.5.8]{SR}) to the measurable set Graph$(\Gamma)\subseteq B\times K^{\mathbb{N}}$, we obtain a measurable selection $g: B\to K^{\mathbb{N}}$ such that $g(t)\in A_{t}$ for each $t\in B$. Therefore, for each $t \in B$, $g(t) = (z_{k}^{*}(t))$ is a sequence in $X_{1}^{*}$ satisfying $\Bigl( \frac{h(t)}{\|h(t)\|},\, (z_{k}^{*}(t)) \Bigr) \in C_{\varepsilon}.$ For each $k\in \mathbb{N}$, we define a function $h_{k}:[0, 1]\to X^*$ by
    \[h_{k}(t)=
    \begin{cases}
       \|h(t)\|z_{k}^*(t) , & t \in B , \\
h(t), & t \notin B.
    \end{cases}
    \]
    Since $g$ is measurable and, on $B$, each $h_{k}$ is $|h|\pi_{k}\circ g$, where $\pi_{k}$ denotes the projection onto the $k$-th coordinate, it follows that each $h_{k}$ is measurable when $X^{*}$ is equipped with the w$^{*}$-topology. By the hypothesis, $X^*$ is a separable Banach space, by Pettis measurability theorem (see \cite[Chapter~II, Section~1, Theorem~2]{DU}), we get that each $h_{k}$ is strongly measurable with $\|h_{k}(t)\|\leq \|h(t)\|$, and hence $h_{k}\in L^{q}(\mu, X^*)$ for each $k\in \mathbb{N}$. It is easy to see that $\|h_{k}\|_{q}\leq 1$. 
    
    We now show that $h_{k}\to h$ in the \textnormal{w}$^*$-topology of $L^{q}(\mu, X^*)$ but not weakly. Let $f\in L^{p}(\mu, X)$. Then we have 
    \[
    \int_{0}^{1} \bigr(h_{k}(t)-h(t))(f(t)\bigl)\,d\mu(t)=\int_{B}\|h(t)\|\Bigl(z_{k}^*(t)-\frac{h(t)}{\|h(t)\|}\Bigr)(f(t))\,d\mu(t).
    \]
    By the choice of $(z_{k}^*(t))$, we have that for each $t\in B$, $z_{k}^*(t)\to \frac{h(t)}{\|h(t)\|}$ in the \textnormal{w}$^*$-topology of $X^*$. Hence, $\|h(t)\|\Bigl(z_{k}^*(t)-\frac{h(t)}{\|h(t)\|}\Bigr)(f(t))\to 0$ as $k\to\infty$, and $\left|\|h(t)\|\Bigl(z_{k}^*(t)-\frac{h(t)}{\|h(t)\|}\Bigr)(f(t))\right|\leq 2\|h(t)\|\|f(t)\|$. By H\"older's inequality, we have that $|h||f|\in L^{1}(\mu)$. Therefore, it follows from Dominated convergence theorem that $\int_{0}^{1} (h_{k}(t)-h(t))(f(t))\,d\mu(t)\to 0$. Thus, $h_{k}\to h$ in the \textnormal{w}$^*$-topology of $L^{q}(\mu, X^*)$. However, by the choice of $z_{k}^*(t)$, we have that $\bigl\|\bigr(\sum_{k}\lambda_{k}z_{k}^*(t)\bigl)-\frac{h(t)}{\|h(t)\|}\bigr\|\geq \varepsilon$ for each $(\lambda_{k})\in \Lambda$ and $t\in B$. Consequently, we have
    
    \begin{align*}
        \Big\|\Bigl(\sum_{k}\lambda_{k}h_{k}\Bigr)-h\Big\|_{q}&=\int_{B}\|h(t)\|\Big\|\Bigl(\sum_{k}\lambda_{k}z_{k}^*(t)\Bigr)-\frac{h(t)}{\|h(t)\|}\Big\|\,d\mu\\
        &\geq \varepsilon \int_{B}\|h(t)\|\,d\mu(t)\\
        &\geq \varepsilon M,
    \end{align*}
    where $M=\int_{B}\|h(t)\|\,d\mu(t)$. 
    \end{case}
\begin{case}
    $p=1$. By using the same argument as in Case~1, we can show that $h_{k}\to h$ in the \textnormal{w}$^*$-topology of $L^{\infty}(\mu, X^*)$ and  
    \begin{align*}
        \Big\|\Bigl(\sum_{k}\lambda_{k}h_{k}\Bigr)-h\Big\|_{\infty}&=\operatorname{ess\,sup}_{t\in B}\|h(t)\|\Big\|\Bigl(\sum_{k}\lambda_{k}z_{k}^*(t)\Bigr)-\frac{h(t)}{\|h(t)\|}\Big\|\\
        &\geq \varepsilon \big( \operatorname{ess\,sup}_{t\in B}\|h(t)\|\big)\\
        &\geq \varepsilon M',
    \end{align*}
    where $M'=\frac{1}{\mu(B)}\int_{B}\|h(t)\|\,d\mu(t)$. 
\end{case}
In both cases, the constants $M$ and $M'$ are positive finite numbers, since $\|h(t)\|\neq 0$ for each $t\in B$, and $\mu(B)>0$. This implies that $h\notin \overline{\operatorname{co}}^{\|.\|}\{h_{k}: k\in \mathbb{N}\}=\overline{\operatorname{co}}^{\textnormal{w}}\{h_{k}: k\in \mathbb{N}\}$, and hence $h_{k}$ does not converges to $h$ weakly. This contradicts the assumption that $h$ is a \textnormal{w}$^*$-\textnormal{w} PC. Hence, $\mu(A)=0$.

Now, suppose $0<\mu(Z)<1$ and $h\in L^{p}(\mu, X)_{1}^*$ is a \textnormal{w}$^*$-\textnormal{w} PC. Then, by Lemma~\ref{L57}, it follows that $h\in L^{p}(Z^c, X)_{1}^*$. Therefore, from the first part of the proof, we have that $\frac{h(t)}{\|h(t)\|}$ is a \textnormal{w}$^*$-\textnormal{w} PC of $S(X^*)$ for almost every $t\in Z^c$. This completes the proof.
\end{proof}

    

It is worth mentioning that the converse of Theorem~\ref{T59} need not hold for $p=1$ (see Example~\ref{E211}). For $1<p<\infty$, however, the converse does hold; this is Theorem~\ref{T28}. We first prove several lemmas that will be used to establish the converse.

For the converse, we need a sharpened form of Jensen's inequality from probability theory, due to Liao and Berg \cite{LGA}, which we now recall.

\begin{thm}\cite[Theorem~1]{LGA}
Let $V$ be a one dimension random variable with mean $\Delta V$, $P(X\in (a, b))=1$ (here $P$ denote the probability measure) and variance $\operatorname{var}(V)$. Let $\varphi$ be a twice differentiable function on $(a, b)$. Then 
\begin{align*}
    0\leq E[\varphi(V)]-\varphi(E[V])\leq \frac{1}{2}\sup_{t\in (a, b)}\varphi''(t)\operatorname{var}(V),
\end{align*}
where $\varphi''(t)$ denotes the second order derivative of $\varphi$ at $t$, $E[\varphi(V)]$ denotes the expected value of $\varphi(V)$, and $\operatorname{var}(V)=E[(V-\Delta V)^2]$.
\end{thm}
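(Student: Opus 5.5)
The plan is to expand $\varphi$ to second order around the mean $m:=E[V]$ and then take expectations. The linear term has mean zero, so only the quadratic remainder survives, and this remainder is controlled by $\sup\varphi''$. I read $P(X\in(a,b))=1$ in the statement as $P(V\in(a,b))=1$. I also assume $\operatorname{var}(V)<\infty$ and $E|\varphi(V)|<\infty$; otherwise both sides are read in the extended sense.

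The first step is to check that $m\in(a,b)$, so that the expansion is legitimate. Since $V\in(a,b)$ almost surely, we have $a\le m\le b$. Suppose $m=b$ with $b<\infty$. Then $b-V$ is a.s.\ positive with $E[b-V]=0$, which is impossible; the case $m=a$ is the same.

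Next, for each $v\in(a,b)$, Taylor's theorem with Lagrange remainder gives
\[
\varphi(v)=\varphi(m)+\varphi'(m)(v-m)+\tfrac12\varphi''(\xi_v)(v-m)^2
\]
for some $\xi_v$ between $m$ and $v$, hence $\xi_v\in(a,b)$. I will not take expectations of $\varphi''(\xi_V)$ directly, because $v\mapsto\xi_v$ need not be measurable. Instead I use the pointwise bounds that follow from the expansion. With $S:=\sup_{(a,b)}\varphi''$,
\[
0\;\le\;\varphi(v)-\varphi(m)-\varphi'(m)(v-m)\;\le\;\tfrac12 S\,(v-m)^2 .
\]
The lower bound uses $\varphi''\ge0$ on $(a,b)$, i.e.\ convexity of $\varphi$. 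Substituting $v=V$ makes every term measurable. Taking expectations and using $E[V-m]=0$ then yields $0\le E[\varphi(V)]-\varphi(m)\le\tfrac12 S\operatorname{var}(V)$. This is trivially true if $S=+\infty$.

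The only delicate point is the lower inequality $0\le E[\varphi(V)]-\varphi(E[V])$. It is plain Jensen, and it genuinely requires $\varphi$ to be convex. Without convexity it fails: take $\varphi(t)=-t^2$ and any nondegenerate $V$. So I would make the convexity hypothesis explicit, noting that in the intended application $\varphi$ is a power function $t\mapsto t^{r}$ with $r\ge1$ on a positive interval, for which $\varphi''\ge0$. The upper inequality holds for any twice differentiable $\varphi$. Integrability of the affine term is immediate from $E|V|<\infty$, which follows from finite variance.
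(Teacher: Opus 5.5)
The paper gives no proof of this result; it only quotes it from \cite{LGA}. Your argument is correct and is essentially the standard proof of this bound: a second-order Taylor expansion at the mean, pointwise bounds on the remainder (which sidestep measurability of $\xi_v$), then integration.

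Your convexity caveat is also right. As transcribed, the lower bound $0\le E[\varphi(V)]-\varphi(E[V])$ fails for non-convex $\varphi$, such as $\varphi(t)=-t^2$; in general the same argument gives $\tfrac12\inf_{(a,b)}\varphi''\,\operatorname{var}(V)$ on the left. This does not affect the paper, since Lemma~\ref{L26} uses only the upper bound, and only for the convex function $t\mapsto t^{p}$ with $p>1$.
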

In our setup, a random variable is simply a real-valued measurable function on a probability space, and its mean and variance are the corresponding integrals. See the following remark.
 \begin{rem}\label{R27}
    Let $f\in L^p(\mu)$. In probabilistic terms, $|f|$ is a random variable on $([0,1],\mathcal{A},\mu)$, $\Delta |f| = {E}[|f|]=\int_{[0,1]} |f(t)|\,d\mu$ is its expectation, and $\operatorname{var}(|f|)=\int_{[0, 1]}(|f|-\Delta |f|)^2\,d\mu$ is its variance. Applying the above theorem to $|f|$ and $\varphi(t)=t^p$, we have
 \begin{align}
    \Delta|f|^{p}-\big(\Delta |f|\big)^{p}\leq \frac{1}{2}\sup_{t\in (0, 1)}\varphi''(t)\operatorname{var}\big(|f|\big).
 \end{align}
\end{rem}
 For a measurable subset $A$ of $[0, 1]$, Holder's inequality gives that $\int_{A}\|f(t)\|\,d\mu\leq \mu(A)^{\frac{1}{q}}\big\|\chi_{A}|f|\big\|_{p}$, where $\chi_{A}$ is the characteristic function of $A$. In the next lemma, we give an upper bound for the difference between the two sides of this inequality. This lemma plays an important role in the proof of Theorem~\ref{T28}. Thorughout this section, we consider $1<p<\infty$ and we denote by $C_{p}$ a constant which depends only on $p$.

\begin{lem}\label{L26}
    Let $f\in L^p(\mu, X)$, $A\subseteq [0, 1]$ be measurable with $\mu(A)>0$, and
$T\geq 2M>0$ satisfying $T-M\leq\|f(t)\|\leq T+M$ for every $t\in A$. Then $$\mu(A)^{\frac{1}{q}}\big\|\chi_{A}|f|\big\|_{p}-\int_{A}\|f(t)\|\,d\mu\leq C_{p}\mu(A)\frac{M^2}{T},$$
where $C_{p}$ is a constant which depends on $p$.
\end{lem}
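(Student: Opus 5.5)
We will reduce the statement to a scalar inequality for a single random variable and then apply the sharpened Jensen inequality of Liao and Berg recalled above, with $\varphi(s)=s^{p}$, much as in Remark~\ref{R27}. The difference is that we use the interval on which $\|f\|$ actually lives, not $(0,1)$.

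\textbf{Step 1: normalization.} Let $\nu=\mu|_{A}/\mu(A)$, a probability measure on $A$. Put $V=\|f(\cdot)\|$ on $A$ and write $E$ for the expectation with respect to $\nu$. Then
\[
\mu(A)^{\frac{1}{q}}\big\|\chi_{A}|f|\big\|_{p}=\mu(A)^{\frac1q}\big(\mu(A)E[V^{p}]\big)^{\frac1p}=\mu(A)\big(E[V^{p}]\big)^{\frac1p},
\qquad \int_{A}\|f(t)\|\,d\mu=\mu(A)E[V].
\]
So it suffices to show
\[
b-a\le C_{p}\frac{M^{2}}{T},\qquad a=E[V],\quad b=(E[V^{p}])^{1/p}.
\]
By hypothesis $V$ takes values in $[T-M,T+M]$. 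Since $T\ge 2M$, this interval lies in $[T/2,3T/2]$.

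\textbf{Step 2: sharpened Jensen.} We apply the Liao--Berg theorem to $V$ on the open interval $(T/2-\delta,\,3T/2+\delta)$ for a small $\delta>0$ with $T/2-\delta>0$, and then let $\delta\to0$. For $\varphi(s)=s^{p}$ we have $\varphi''(s)=p(p-1)s^{p-2}$. Its supremum over $[T/2,3T/2]$ is at most
\[
p(p-1)\max\{(1/2)^{p-2},(3/2)^{p-2}\}\,T^{p-2}=C_{p}'T^{p-2}.
\]
Since $V$ lies in an interval of length $2M$, $\operatorname{var}(V)\le M^{2}$. (It is even at most $M^{2}$ by the centered bound $E[(V-T)^{2}]\le M^{2}$.) Hence
\[
0\le b^{p}-a^{p}\le \tfrac12 C_{p}'T^{p-2}M^{2}.
\]

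\textbf{Step 3: from $p$-th powers back to the quantities.} By the mean value theorem, $b^{p}-a^{p}=p\,\xi^{p-1}(b-a)$ for some $\xi$ between $a$ and $b$. Both $a$ and $b$ lie in $[T-M,T+M]\subseteq[T/2,3T/2]$, since they are averages (a mean and a $p$-mean) of values in that interval. Therefore
\[
\xi^{p-1}\ge \min\{(1/2)^{p-1},(3/2)^{p-1}\}\,T^{p-1}=c_{p}T^{p-1},\qquad c_{p}>0.
\]
Combining the two estimates,
\[
b-a\le \frac{C_{p}'T^{p-2}M^{2}}{2p\,c_{p}T^{p-1}}=C_{p}\frac{M^{2}}{T}.
\]
Multiplying by $\mu(A)$ gives the claim.

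The only delicate point is Step 2. The sup of $\varphi''$ must be taken over an interval bounded away from $0$, which is exactly where $T\ge 2M$ enters; for $p<2$, $\varphi''$ blows up at $0$. The resulting $T^{p-2}$ factor must then be matched against the $T^{p-1}$ from the mean value step to produce $1/T$. Everything else is routine bookkeeping.
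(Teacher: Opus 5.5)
Your proof is correct and follows the same route as the paper. You normalize to the probability measure $\mu|_A/\mu(A)$, apply the Liao--Berg sharpened Jensen inequality to $\varphi(s)=s^p$ with $\sup\varphi''\le C_pT^{p-2}$ and variance at most $M^2$, and then pass back from $p$-th powers using that the relevant means are at least $T/2$. The only differences are cosmetic: you use the mean value theorem where the paper uses Bernoulli's inequality $(1+x)^{1/p}\le 1+x/p$, and you handle the open-interval hypothesis of the Liao--Berg theorem slightly more carefully.
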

\begin{proof}
   Let $\Delta|f|$ denote the mean of the function $|f|$ over $A$, that is, $\Delta|f|=\frac{1}{\mu(A)}\int_{A}\|f(t)\|\,d\mu$. Then, it is equivalent to show that
   \begin{align}\label{EL1}
    \big(\Delta|f|^{p}\big)^{\frac{1}{p}}-\Delta |f|\leq \frac{C_{p}M^2}{T}.
   \end{align}
   Let $I=[T-M, T+M]$. We define $\varphi:I\to \mathbb{R}$ by $\varphi(t)=t^p$. Clearly, $\varphi$ is a twice differentiable function on the interval $I$. Let $\nu=\frac{1}{\mu(A)}\mu|_{A}$. Then, by applying Remark~\ref{R27} to the measure space $(A,\mathcal{A}|_{A}, \nu)$ with $\chi_{A}|f|$ and $\varphi$, we have
 \[
\Delta|f|^{p}-\big(\Delta |f|\big)^{p}
\leq \tfrac{1}{2}\,\sup_{t\in I}\varphi''(t)\,\operatorname{var}\big(\chi_{A}|f|\big),
\]
where $\operatorname{var}\big(\chi_{A}|f|\big)=\frac{1}{\mu(A)}\int_{A}\Big(|f|(t)-\Delta|f|\Big)^2\,d\mu$. We have $\varphi''(t)=p(p-1)t^{(p-2)}$. Let $K=\sup_{t\in I}\varphi''(t)$. Since $M\leq \frac{T}{2}$, the interval $I$ is contained in $\big[\frac{T}{2}, \frac{3T}{2}\big]$. Consequently, $K\leq p(p-1)\big(\tfrac{3T}{2}\big)^{p-2}$ when $p\geq 2$, and
$K\leq p(p-1)\big(\tfrac{T}{2}\big)^{p-2}$ when $1<p<2$. In either case $K\leq C_{p}T^{p-2}$, where $C_{p}$ depends only on $p$. The well-known Popoviciu inequality says that (see \cite{PT}) $$\operatorname{var}\big(\chi_{A}|f|\big)\leq \frac{1}{4}\big(T+M-(T-M)\big)^2=\frac{1}{4}(2M)^2=M^2.$$ Therefore, we get 
\begin{align*}
\Delta|f|^{p}&\leq \frac{K}{2}\operatorname{var}\big(\chi_{A}|f|\big)+\big(\Delta |f|\big)^{p}\nonumber\\
&\leq \frac{1}{2}C_{p}T^{p-2}M^2+\big(\Delta |f|\big)^{p}.
\end{align*}
Taking $p$-th root to both sides of the above inequality and by applying the Bernoulli inequality $(1+x)^r\leq 1+rx$ for $x>-1$ and $0<r\leq 1$ we get
\begin{align*}
\big(\Delta|f|^{p}\big)^{\frac{1}{p }}\nonumber
&\leq \left(\frac{1}{2}C_{p}T^{p-2}M^2+\big(\Delta |f|\big)^{p}\right)^{\frac{1}{p}}\\
&= \Delta |f|\left(1+\frac{C_{p}T^{p-2}M^2}{2\big(\Delta |f|\big)^p}\right)^{\frac{1}{p}}\\
&\leq \Delta |f|+\frac{C_{p}T^{p-2}M^2}{2p\big(\Delta |f|\big)^{p-1}}.
\end{align*}
Since $\Delta |f|\geq T-M\geq \frac{T}{2}$, we have $\big(\Delta |f|\big)^{p-1}\geq \left(\frac{T}{2}\right)^{p-1}$. Therefore, we get
\begin{align*}
\big(\Delta|f|^{p}\big)^{\frac{1}{p}}\leq \Delta |f|+\frac{C_{p}T^{p-2}M^2}{2p\left(\frac{T}{2}\right)^{p-1}}=\Delta |f|+\frac{C_{p}M^2}{T},
\end{align*}
which establishes~\ref{EL1}, and hence completes the proof.
\end{proof}

For a Banach space $X$, let $L^{p}_{\textnormal{w}^*}(\mu, X^*)$ denote the space of all \textnormal{w}$^*$-measurable function from $[0, 1] \to X^*$ such that $\int_{[0, 1]} \|f(t)\|^p\,d\mu<\infty$, equipped with norm $\|f\|_{p}=\left(\int_{[0, 1]} \|f(t)\|^p\,d\mu\right)^{\frac{1}{p}}$. It is well-known that the dual space $L^p(\mu, X)^*$ can be identified with the space $L^{q}_{\textnormal{w}^*}(\mu, X^*)$ (see \cite[Chapter~1, Theorem~1.5.4]{PM}).


Let $A$ be a measurable subset of $[0, 1]$ with $\mu(A)>0$. we have $L^{p}_{\textnormal{w}^*}(A, X^{**})$ is an $\ell^p$-summand in $L^{p}_{\textnormal{w}^{*}}(\mu, X^{**})$. Let $P_{A}$ denote the projection $f \mapsto \chi_{A}f$ from $L^{p}_{\textnormal{w}^{*}}(\mu, X^{**})$ onto the subspace $L^{p}_{\textnormal{w}^*}(A, X^{**})$. For $\Lambda \in L^{p}_{\textnormal{w}^{*}}(\mu, X^{**})^{*}$, we write $\Lambda P_{A} = \Lambda \circ P_{A}$; this functional can be identified with the restriction of $\Lambda$ to $L^{p}_{\textnormal{w}^{*}}(A, X^{**})$. We use this notation in the following lemma.


\begin{lem}\label{L27}
Let $X$ be a Banach
space such that $X^*$ has the RNP. Let
$h\in L^{q}(\mu, X^*)$ and let $\Lambda\in L^{p}(\mu, X)^{***}$ be a norm-preserving extension of $h$ from $L^{p}(\mu, X)$ to $L^{p}(\mu, X)^{**}$. Then, for every measurable subset $A$ of $[0, 1]$, $\Lambda P_{A}$ is a norm-preserving extension of $\chi_{A}h$ from $L^{p}(A, X)$ to $L^{p}(A, X)^{**}$.

\end{lem}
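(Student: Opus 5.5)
Two things have to be checked for $\Lambda P_{A}$: that it extends $\chi_{A}h$, and that its norm equals $\|\chi_{A}h\|_{q}$. Only the inequality $\|\Lambda P_A\|\le\|\chi_A h\|_q$ needs real work. The tools are the $\ell^p$-decomposition $L^{p}(\mu,X)^{**}=L^{p}(A,X)^{**}\oplus_{p}L^{p}(A^c,X)^{**}$ and the equality case of H\"older's inequality in $\mathbb{R}^2$. The projection $P_A$ is defined on $L^{p}_{\textnormal{w}^*}(\mu,X^{**})$, which the paper identifies with $L^p(\mu,X)^{**}$ via the duality $L^{q}(\mu,X^*)^*=L^{p}_{\textnormal{w}^*}(\mu,X^{**})$ (using the RNP of $X^*$ and \cite[Theorem~1.5.4]{PM}). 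On the canonical image of $L^p(\mu,X)$ it restricts to $f\mapsto\chi_A f$, and it is exactly the bidual $P_A^{**}$ of the band projection on $L^p(\mu,X)$. Consequently $L^{p}(\mu,X)^{**}=L^{p}(A,X)^{**}\oplus_{p}L^{p}(A^c,X)^{**}$ isometrically, with $P_A$ and $P_{A^c}=I-P_A$ the two coordinate projections.

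\textbf{Step 1 (extension).} For $f\in L^{p}(A,X)$, viewed as an element of $L^p(\mu,X)$ vanishing off $A$, we have
\[
\Lambda P_A(f)=\Lambda(\chi_A f)=h(\chi_A f)=\int_A h(t)(f(t))\,d\mu=(\chi_A h)(f).
\]
So $\Lambda P_A$ extends $\chi_A h$. This immediately gives $\|\Lambda P_A\|\ge\|\chi_A h\|_{q}$, where we use $L^p(A,X)^*=L^q(A,X^*)$, which holds since RNP passes to $\mu|_A$ as remarked earlier.

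\textbf{Step 2 (norm bound).} Set $a=\|\Lambda P_A\|$, $b=\|\Lambda P_{A^c}\|$, $a_0=\|\chi_A h\|_q$ and $b_0=\|\chi_{A^c}h\|_q$, so that $a_0^q+b_0^q=\|h\|_q^q$. By Step 1 applied to $A$ and to $A^c$, we have $a\ge a_0$ and $b\ge b_0$. Since $\Lambda=\Lambda P_A+\Lambda P_{A^c}$ and the bidual splits as an $\ell^p$-sum, the dual of that splitting is an $\ell^q$-sum, which gives
\[
\|\Lambda\|^q=a^q+b^q.
\]
To verify this directly, take $F=P_AF+P_{A^c}F$. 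Then
\[
|\Lambda F|\le a\|P_AF\|+b\|P_{A^c}F\|\le (a^q+b^q)^{1/q}\|F\|
\]
by H\"older in $\mathbb{R}^2$. For the reverse inequality, choose near-norming elements in each summand and scale them by the H\"older-extremal weights. Since $\|\Lambda\|=\|h\|$ by hypothesis,
\[
a^q+b^q=a_0^q+b_0^q.
\]
Combined with $a\ge a_0$ and $b\ge b_0$, this forces $a=a_0$ (and $b=b_0$). This is exactly the norm-preservation claimed in Lemma~\ref{L27}.

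\textbf{Main obstacle.} The delicate point is the identification in the first paragraph. We must justify that $P_A$ on $L^{p}_{\textnormal{w}^*}(\mu,X^{**})$ really is the bidual projection, and that the decomposition is an isometric $\ell^p$-sum at the bidual level. I would argue abstractly rather than through the function representation. If $E=E_1\oplus_p E_2$ with projection $Q$, then $E^*=E_1^*\oplus_q E_2^*$ via $Q^*$, and $E^{**}=E_1^{**}\oplus_p E_2^{**}$ via $Q^{**}$. Taking $E=L^p(\mu,X)$ and $Q=$ multiplication by $\chi_A$, one then checks on the dense/representing level that $Q^{**}$ agrees with multiplication by $\chi_A$ on $L^{p}_{\textnormal{w}^*}(\mu,X^{**})$. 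This holds because for $g\in L^q(\mu,X^*)$ and $F\in L^p_{\textnormal{w}^*}(\mu,X^{**})$,
\[
\langle Q^{**}F,g\rangle=\langle F,\chi_A g\rangle=\int_A F(t)(g(t))\,d\mu.
\]
Once this is settled, Steps 1–2 are routine.
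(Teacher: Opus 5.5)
Your proposal is correct and is essentially the paper's argument. The extension property gives $\|\Lambda P_A\|\ge\|\chi_A h\|_q$ and $\|\Lambda P_{A^c}\|\ge\|\chi_{A^c}h\|_q$, and the $\ell^q$-splitting $\|\Lambda\|^q=\|\Lambda P_A\|^q+\|\Lambda P_{A^c}\|^q$ together with $\|\Lambda\|=\|h\|_q$ forces equality. Your check that $P_A$ is the bidual of the band projection fills in a point the paper takes for granted. Note, however, that the direction the argument actually needs is $\|\Lambda\|^q\ge\|\Lambda P_A\|^q+\|\Lambda P_{A^c}\|^q$, which you only sketch via near-norming elements, not the H\"older upper bound you wrote out in detail.
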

\begin{proof}
For a measurable subset $A$ of $[0, 1]$ with $\mu(A)>0$, we have $$L^{p}(\mu, X)^{**}=L^{p}_{\textnormal{w}^{*}}(\mu, X^{**})=L^{p}_{\textnormal{w}^{*}}(A, X^{**})\oplus_{p}L^{p}_{\textnormal{w}^{*}}(A^c, X^{**}),$$
by duality, we get 
$$L^{p}_{\textnormal{w}^{*}}(\mu, X^{**})^*=L^{p}_{\textnormal{w}^{*}}(A, X^{**})^*\oplus_{q}L^{p}_{\textnormal{w}^{*}}(A^c, X^{**})^*.$$
Therefore, $\Lambda=\Lambda P_{A}+\Lambda P_{A^c}$ and $\|\Lambda\|^q=\|\Lambda P_{A}\|^q+\|\Lambda P_{A^c}\|^q$, where $\Lambda P_{A}\in L^{p}_{\textnormal{w}^{*}}(A, X^{**})^*$ and $\Lambda P_{A^c}\in L^{p}_{\textnormal{w}^{*}}(A^c, X^{**})^*$. Since $\Lambda$ is a norm-preserving extension of $h$, we have $\|\Lambda\|=\|h\|_{q}$. By the definition of $P_{A}$, for $f\in L^p(A, X)$, we have $\Lambda P_{A}(f)=\Lambda(\chi_{A}f)=\int_{A}h(t)(f(t))\,d\mu$. This shows that $\Lambda P_{A}$ is an extension of $\chi_{A}h$ from $L^{p}(A, X)$ to $L^{p}(A, X)^{**}$. Consequently, we have $\|\Lambda P_{A}\|\geq \|\chi_{A}h\|_{q}$. A similar argument shows that $\|\Lambda P_{A^c}\|\geq \|\chi_{A^c}h\|_{q}$. Since $\|h\|_{q}^q=\|\Lambda\|^q=\|\Lambda P_{A}\|^q+\|\Lambda P_{A^c}\|^q$ and $\|h\|_{q}^q=\|\chi_{A}h\|_{q}^q+\|\chi_{A^c}h\|_{q}^q$, we have $\|\Lambda P_{A}\|=\|\chi_{A}h\|_{q}$, which shows that $\Lambda P_{A}$ is a norm-preserving extension of $\chi_{A}h$ from $L^{p}(A, X)$ to $L^{p}(A, X)^{**}$. This completes the proof.
\end{proof}
The following remark is an immediate consequence of Lemma~\ref{L27}; we will use it in the proof of Theorem~\ref{T28}.
\begin{rem}\label{R29}
    Let $A, X, h\in L^{q}(\mu, X^*)$ and $\Lambda\in L^{p}(\mu, X)^{***}$ be as in Lemma~\ref{L27}. We have $L^{p}(\mu, X)^{***}=L^{p}_{\textnormal{w}^*}(\mu, X^{**})^*$ . For $G\in L^{p}_{\textnormal{w}^*}(\mu, X^{**})$, by Lemma~\ref{L27}, we have $\big\|\Lambda|_{L^{p}_{\textnormal{w}^{*}}(A, X^{**})}\big\|=\|\Lambda P_{A}\|=\big\|\chi_{A}h\big\|_{q}$. Hence, 
    \begin{align*}
        \big|\Lambda(\chi_{A}G)\big|\leq \big\|\Lambda P_{A}\big\|\big\|\chi_{A}G\big\|_{p}=\big\|\chi_{A}h\big\|_{q}\big\|\chi_{A}G\big\|_{p}.
    \end{align*}
     
\end{rem}
Suppose $g:[0, 1]\to \mathbb{R}$ is positive a function. For $0<a<b<\infty$, we define the set $$S_{(a, b, g)}=\{t\in [0, 1]: 0<g(t)<a \textnormal{ or } g(t)>b \}.$$ We now prove the following lemma.
\begin{lem}\label{L28}
    Let $g:[0, 1]\to\mathbb{R}$ be a positive measurable function defined almost everywhere on
$[0, 1]$, and let $f_{1},\dots,f_{k}\in L^{1}(\mu)$. Then, for every $\varepsilon>0$ there
exists $0<a<b<\infty$ such that $\int_{S_{(a, b, g)}}|f_{i}(t)|\, d\mu<\varepsilon$ for
$i=1,\dots,k$.
\end{lem}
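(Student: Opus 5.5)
The idea is to combine absolute continuity of the Lebesgue integral with the observation that the exceptional set $S_{(a,b,g)}$ shrinks to a null set as $a\to 0$ and $b\to\infty$. Let $N$ be the null set off which $g$ is defined and positive, and put $F=\sum_{i=1}^{k}|f_i|\in L^{1}(\mu)$. Since $\int_{S}|f_i|\,d\mu\le\int_{S}F\,d\mu$ for each $i$, it suffices to find $0<a<b<\infty$ with $\int_{S_{(a,b,g)}}F\,d\mu<\varepsilon$. This reduces the finitely many functions to a single integrable function.

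\textbf{Step 1 (absolute continuity).} Because $F\in L^1(\mu)$, there is $\delta>0$ such that $\int_E F\,d\mu<\varepsilon$ for every measurable $E\subseteq[0,1]$ with $\mu(E)<\delta$. The set $S_{(a,b,g)}$ is measurable, being defined by measurable inequalities on $g$.

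\textbf{Step 2 (the sets shrink).} Take $a_n=\frac1n$ and $b_n=n$ for $n\ge 2$, and set $S_n=S_{(a_n,b_n,g)}=\{0<g<\frac1n\}\cup\{g>n\}$. These sets are decreasing in $n$. Their intersection $\bigcap_n S_n$ consists of points $t$ with $0<g(t)<\frac1n$ for all large $n$ or $g(t)>n$ for all large $n$. Such a point would force $g(t)=0$ or $g(t)=\infty$, which is impossible off $N$ since $g$ is finite and positive there. Hence $\mu(\bigcap_n S_n)=0$. Since $\mu([0,1])=1$ is finite, continuity of measure from above gives $\mu(S_n)\to 0$.

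\textbf{Step 3 (conclusion).} Choose $n$ with $\mu(S_n)<\delta$, and set $a=\frac1n<b=n$. Then for each $i$,
\[
\int_{S_{(a,b,g)}}|f_i|\,d\mu\le\int_{S_n}F\,d\mu<\varepsilon .
\]
Alternatively, one can apply dominated convergence to $\chi_{S_n}F\to 0$ a.e. with dominant $F$, which skips Step 1.

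There is no real obstacle. The only care needed is in Step 2: checking that the intersection of the $S_n$ is null uses that $g$ is finite and strictly positive almost everywhere, and that $\mu$ is finite, so that measure is continuous from above.
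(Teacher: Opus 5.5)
Your proof is correct and follows essentially the same route as the paper. Both arguments use decreasing sets $S_n=S_{(a_n,b_n,g)}$ with $a_n\downarrow 0$ and $b_n\uparrow\infty$, show that $\bigcap_n S_n$ lies in the null set where $g$ is infinite or undefined, and conclude by continuity from above together with dominated convergence (or, in your case, absolute continuity of the integral); the only cosmetic difference is that you combine the $f_i$ into $F=\sum_i|f_i|$, whereas the paper handles each $f_i$ separately and takes the largest index.
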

\begin{proof}
    Let $(a_{n})$ be a decreasing sequence converging to zero and let $(b_{n})$ be an increasing sequence diverging to $\infty$. For each $n\in \mathbb{N}$, we denote the set $S_{(a_{n}, b_{n}, g)}$ by $S_{n}$. It is easy to check that $(S_{n})$ is a decreasing sequence of measurable sets. Therefore, by \cite[Chapter~1, Theorem~1.19]{WR}, we have that $\mu(S_{n})\to \mu(\cap_{n\geq 1} S_{n})$. 

    We claim that $\mu(\cap_{n\geq 1} S_{n})=0$. Let $t\in \cap_{n\geq 1} S_{n}$. We have $g(t)>0$, and hence there exists $k\in \mathbb{N}$ such that $a_{n}<g(t)$ for each $n\geq k$. Therefore, we have that $g(t)>b_{n}$ for each $n\geq k$, and hence $g(t)=\infty$. This implies $\cap_{n\geq 1} S_{n}\subseteq \{t\in[0, 1]: g(t)=\infty\}$. Since $g$ is defined almost everywhere on $[0, 1]$, it follows that $\mu(\cap_{n\geq 1} S_{n})=0$. Thus $\mu(S_{n})\to 0$. 
    
    Let $1\leq i\leq k$ be fixed. Then, it is easy to see that $\chi_{S_{n}}\big|f_{i}\big|\to 0$  pointwise almost everywhere as each point not in $\cap_{n\geq 1} S_{n}$ leaves the sets $S_{n}$ eventually. Since $\chi_{S_{n}}\big|f_{i}\big|\leq \big|f_{i}\big|\in L^1(\mu)$, by Dominated convergence theorem, we get that $\int_{S_{n}}|f_{i}(t)|\, d\mu\to 0$. Therefore, for each $\varepsilon>0$, there exists $n_{i}\in \mathbb{N}$ such that $\int_{S_{n_{i}}}|f_{i}(t)|\, d\mu<\varepsilon$. Suppose $m=\operatorname{max}\{n_{i}:1\leq i\leq k\}$. Since the sets $S_{n}$ are decreasing, $S_{m}\subseteq S_{n_{i}}$ for each $i$, and hence
$\int_{S_{m}}|f_{i}(t)|\, d\mu\leq\int_{S_{n_{i}}}|f_{i}(t)|\, d\mu<\varepsilon$ for
$1\leq i\leq k$. The conclusion therefore follows on taking $a=a_{m}$ and $b=b_{m}$.
\end{proof}
We recall from \cite[Chapter~III, Section~2, Lemma 2.14]{HWW} that a point $x^*\in S(X^*)$ is a \textnormal{w}$^*$-\textnormal{w} PC if and only if $x^*$ has a unique norm preserving extension from $X$ to $X^{**}$. Let $x^*\in X_{1}^*$ and let $x^{**}\in X^{**}$. By the Hahn-Banach theorem (see \cite[Chapter~3, Section~6]{JBC}), the set of
norm-preserving extensions of $x^*$ to $X^{**}$ is non-empty, and
$$
\sup_{\Lambda\in X^{***}}\Big\{\operatorname{Re}\big(\Lambda(x^{**})\big):\Lambda\in X_{1}^{***},\ 
\Lambda\big|_{X}=x^*\Big\}
=\inf_{x\in X}\Big\{\|x^{**}-x\|+\big(\operatorname{Re}x^{*}(x)\big)\Big\}.
$$
In particular, if $x^*$ is a \textnormal{w}$^*$-\textnormal{w} PC of $S(X^*)$, by
\cite[Chapter~III, Section~2, Lemma 2.14]{HWW}, the canonical image of $x^*$ in $X^{***}$ is its only norm-preserving
extension, so the supremum above equals $\operatorname{Re}x^{**}(x^*)$. Hence
$$
\inf_{x\in X}\Big\{\|x^{**}-x\|+\operatorname{Re}\big(x^{*}(x)\big)-\operatorname{Re}\big(x^{**}(x^*)\big)\Big\}=0 .
$$
Combining this fact with the lemmas established above, we now prove the converse of Theorem~\ref{T59} for $1 < p < \infty$.
\begin{thm}\label{T28}
    Let $X$ be a separable Banach space such that $X^*$ has the RNP, so that $L^{p}(\mu, X)^*=L^{q}(\mu, X^*)$. Let $h\in  S\big(L^{q}(\mu, X^*)\big)$ be such that $\frac{h(t)}{\|h(t)\|}$ is a \textnormal{w}$^*$-\textnormal{w} PC of $S(X^*)$ for almost every $t$ with $h(t)\neq 0$. Then $h$ is a \textnormal{w}$^*$-\textnormal{w} PC of $S\big(L^{q}(\mu, X^*)\big)$.
\end{thm}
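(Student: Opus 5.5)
The plan is to use the characterization recalled just before the statement: by \cite[Chapter~III, Section~2, Lemma 2.14]{HWW}, a unit functional is a \textnormal{w}$^*$-\textnormal{w} PC of the dual sphere if and only if it has a unique norm-preserving extension to the bidual. So it suffices to show that the only norm-preserving extension of $h$ to $L^{p}(\mu,X)^{**}=L^{p}_{\textnormal{w}^*}(\mu,X^{**})$ is the canonical one.

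Fix a norm-preserving extension $\Lambda$ and $G\in L^{p}_{\textnormal{w}^*}(\mu,X^{**})$. The goal is
$$\operatorname{Re}\Lambda(G)\le \operatorname{Re}\int_0^1 G(t)(h(t))\,d\mu+\eta$$
for every $\eta>0$. Applying this to $-G$ (and to $iG$ in the complex case) then gives $\Lambda(G)=\int G(h)\,d\mu$, which is uniqueness. By Lemma~\ref{L57} we may discard the zero set of $h$.

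\textbf{Step 1 (pointwise approximants).} For a.e.\ $t$, the point $x_t^*=h(t)/\|h(t)\|$ is a \textnormal{w}$^*$-\textnormal{w} PC. The displayed infimum identity before the theorem then gives, for each $\delta>0$, some $x\in X$ with
$$\|G(t)-x\|+\operatorname{Re}x_t^*(x)-\operatorname{Re}G(t)(x_t^*)<\delta .$$
I would pick such an $x=f(t)$ measurably. Since $X$ is separable, I would use a countable dense set together with a first-index selection, which avoids heavy selection theorems.

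I would also multiply by the weight $\|h(t)\|$, or rather choose the tolerance $\delta(t)$ so that the total error $\int\|h(t)\|\delta(t)\,d\mu$ is small. Truncating on a set where $\|G\|$, $\|h\|$ and $\|f\|$ are bounded, and using Lemma~\ref{L28} to make the complement negligible in the relevant $L^1$ integrals, should ensure $f\in L^{p}(\mu,X)$. Then
$$\Lambda(G)=\Lambda(G-f)+\int h(f)\,d\mu,$$
and it remains to show
$$\operatorname{Re}\Lambda(G-f)\le \int\|h(t)\|\,\|G(t)-f(t)\|\,d\mu+\text{small}.$$
Summing this with the pointwise inequality from Step 1 yields the goal.

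\textbf{Step 2 (localization).} Partition the good set into finitely many measurable pieces $A_j$ on which $\|h\|$ lies in $[T_j-M_j,T_j+M_j]$ with $M_j/T_j$ tiny. By Lemma~\ref{L28}, the set where $\|h\|$ is very small or very large, i.e.\ $S_{(a,b,\|h\|)}$, carries negligible mass of $|h|^q$, $|G|^p$ and $|h||G|$, so it can be thrown away. By Remark~\ref{R29},
$$\operatorname{Re}\Lambda(\chi_{A_j}(G-f))\le\|\chi_{A_j}h\|_q\,\|\chi_{A_j}(G-f)\|_p .$$
The right side must be compared with $\int_{A_j}\|h\|\,\|G-f\|\,d\mu$. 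Hölder equality requires $\|G-f\|^p$ to be proportional to $\|h\|^q$ on $A_j$. So I would arrange, in Step 1, that $\|G(t)-f(t)\|$ is itself nearly constant on each $A_j$; alternatively, I would refine the partition further by the level sets of $\|G-f\|$. Lemma~\ref{L26} (applied to $h$ with $q$ in place of $p$, and to $G-f$) then bounds each Hölder gap by $C\mu(A_j)M_j^2/T_j$ times the appropriate factors. Summing over $j$ gives an error of order $\max_j M_j/T_j$, which can be made arbitrarily small.

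\textbf{Main obstacle.} The hard part is the interplay between Step 1 and Step 2. The near-minimizer $x=f(t)$ of the infimum need not make $\|G(t)-f(t)\|$ small or controlled, yet its size enters the Hölder comparison. I would first try to refine the partition jointly by the values of $\|h\|$ and $\|G-f\|$, after $f$ is fixed, with relative oscillation $M/T$ small on each cell. That is exactly the situation of Lemma~\ref{L26}, with cells where either quantity is near $0$ or $\infty$ removed via Lemma~\ref{L28}. One must also verify that the resulting finite sum of Hölder bounds is compatible with $\Lambda$ being only additive over finitely many pieces, which Lemma~\ref{L27} and the $\ell^q$-decomposition guarantee.
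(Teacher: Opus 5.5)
Your proposal is correct, but at the decisive step it takes a different route from the paper. The two arguments share a common skeleton:
\begin{itemize}
\item reduction to uniqueness of the norm-preserving extension;
\item a first-index measurable selection of a near-minimizer from a dense sequence in $X$ (the paper's $g_1$, your $f$);
\item discarding $S_{(a,b,|h|)}$ and the set where $\|G\|$ or the selection is large, via Lemma~\ref{L28};
\item the local bound of Remark~\ref{R29}.
\end{itemize}

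The difference is in how the Hölder gap is closed. The paper partitions only by the values of $\|h\|$. To make the other factor nearly constant, it introduces a second selection $g_2$ with $\operatorname{Re}\frac{h(t)}{\|h(t)\|}(g_2(t))>1-\delta$ and passes to $D=(G-g_1)+Tg_2$. Then $\|D\|\in[T-M,T+M]$, while $D$ is still almost normed by $h/\|h\|$. The oscillation of $\|D\|$ is $M$ in absolute terms against values of size $T$, so a first-order estimate is useless there. The paper therefore needs the second-order Jensen bound of Lemma~\ref{L26}, which gives an $M^2/T$ gap, together with a careful ordering of $\eta, M, T, \delta, \delta'$.

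Your joint partition makes all of this unnecessary. You take cells on which both $\|h\|$ and $\|F\|$, with $F=G-f$, lie in intervals of ratio $1+\delta'$. On such a cell $A$ one has trivially
\[
\|\chi_A h\|_q\,\|\chi_A F\|_p\le \Bigl(\sup_A\|h\|\Bigr)\Bigl(\sup_A\|F\|\Bigr)\mu(A)\le (1+\delta')^2\int_A\|h(t)\|\,\|F(t)\|\,d\mu .
\]
Summing over cells, and letting $\delta'$ and the near-zero cut-off tend to $0$, gives the clean inequality $\operatorname{Re}\Lambda(F)\le\int\|h\|\,\|F\|\,d\mu$ on the truncated set. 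Uniqueness then follows directly by integrating the pointwise near-minimizer inequality against $\|h\|$. You need neither $g_2$, nor $T$, nor Lemma~\ref{L26}; a first-order bound suffices.

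Two points to make precise when you write it out:
\begin{itemize}
\item Control the cells where $\|F\|$ is close to $0$ through $|\Lambda(\chi_E F)|\le\|\chi_E F\|_p$. Do not reset $f=0$ there, since $\|\chi_E G\|_p$ need not be small.
\item Measurability of $t\mapsto\|G(t)-f(t)\|$, and hence of the cells, uses the separability of $X^*$. This follows from $X$ being separable and $X^*$ having the RNP.
\end{itemize}
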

\begin{proof}
    As $X^*$ has the RNP, $L^p(\mu, X)^*=L^q(\mu, X^*)$. Applying \cite[Chapter~1, Theorem~1.5.4]{PM} to $L^q(\mu, X^*)$, we obtain $L^p(\mu, X)^{**}=L_{\textnormal{w}^*}^p(\mu, X^{**})$. Let $Z=\big\{t\in [0, 1]:h(t)=0\big\}$. We first prove the theorem in the case when $\mu(Z)=0$, that is, when $h(t)\neq 0$ almost everywhere; the general case is deduced at the end of the proof. Without loss of generality, we may assume that $h(t)\neq 0$ for every $t\in [0, 1]$. Suppose $h\in  S\big(L^{q}(\mu, X^*)\big)$ is such that $\frac{h(t)}{\|h(t)\|}$ is a \textnormal{w}$^*$-\textnormal{w} PC of $S(X^*)$ for almost every $t$. Redefining $h$ on a null set, with values a fixed \textnormal{w}$^*$-\textnormal{w} PC of
$S(X^*)$, we may assume that $\frac{h(t)}{\|h(t)\|}$ is a \textnormal{w}$^*$-\textnormal{w}
PC of $S(X^*)$ for every $t\in[0, 1]$. By the virtue of \cite[Chapter~III, Section~2, Lemma 2.14]{HWW}, we establish the proof by exhibiting a unique norm-preserving extension of $h$ from $L^p(\mu, X)$ to $L^p(\mu, X)^{**}$. Let $\hat{h}$ denote the canonical embedding of $h$ into $L^p(\mu, X)^{***}$, and let $\Lambda$ be an element of $L^p(\mu, X)^{***}$ such that $\Lambda\big|_{L^p(\mu, X)}=h$ and $\big\|\Lambda\big\|=\big\|h\big\|_{q}$. We show that $\Lambda$ is equal to $\hat{h}$, that is $$\Lambda(G)=\int_{[0, 1]}G(t)\big(h(t)\big)\quad \textnormal{for every }G\in L_{\textnormal{w}^*}^p(\mu, X^{**}).$$ The proof proceeds in the following steps. 
    \begin{step}\label{S1}
    In this step we construct, for each $\eta>0$, a $\textnormal{w}^*$-measurable function $D_{1}$ from $[0, 1]$ to $X^{**}$ such
that
\[
\big\|D_{1}(t)\big\|<\operatorname{Re}\Big(D_{1}(t)\big(\tfrac{h(t)}{\|h(t)\|}\big)\Big)+\eta
\qquad\text{for every } t\in[0, 1] .
\] 
    
    Let $G\in L_{\textnormal{w}^*}^p(\mu, X^{**})$. Since $\frac{h(t)}{\|h(t)\|}$ is a \textnormal{w}$^*$-\textnormal{w} PC of $X_{1}^*$ for every $t$, it follows that 
    \begin{align}\label{E1}
        \inf_{x\in X}\left\{\big\|G(t)-x\big\|+\operatorname{Re}\left(\frac{h(t)}{\big\|h(t)\big\|}(x)-G(t)\Big(\frac{h(t)}{\big\|h(t)\big\|}\Big)\right)\right\}=0\qquad \textnormal{for each }t\in [0, 1].
    \end{align}
    For each $x\in X$, we define $\phi_{x}:[0, 1]\to \mathbb{R}$ by $$\phi_{x}(t)=\big\|G(t)-x\big\|+\operatorname{Re}\left(\frac{h(t)}{\big\|h(t)\big\|}(x)-G(t)\Big(\frac{h(t)}{\big\|h(t)\big\|}\Big)\right).$$
Clearly, $\phi_{x}(t)\geq 0$ for each $t\in [0, 1]$. Since $X$ is separable and $X^*$ has the RNP, it follows from \cite[Section~2, Corollary~2.15]{RRP} that $X^*$ is separable. Let $\{x^*_n:n\in\mathbb{N}\}$ be a dense subset of $S(X^*)$. Since $G$ is \textnormal{w}$^*$-measurable, we get that for each $x^*_{n}$, the function $t\mapsto G(t)(x^*_{n})$ is measurable. Hence, the map $t\mapsto \big\|G(t)\big\|=\sup_{n\in\mathbb{N}}\{\big|G(t)(x^*_{n})\big|\}$ is measurable and therefore $t\mapsto \big\|G(t)-x\big\|$ is measurable. Now, it is easy to see that $\phi_{x}$ is a measurable function. 

Let $\big\{x_{n}:n\in\mathbb{N}\big\}$ be a dense subset of $X$. By~\eqref{E1}, for every $t\in[0, 1]$ and every $\eta>0$ the set $\big\{n:\phi_{x_{n}}(t)<\eta\big\}$ is non-empty. Hence, it has a least element $n(t)$, by the well-ordering principle. We define $g_{1}:[0, 1]\to X$ by $g_{1}(t)=x_{n(t)}$. We claim that $g_{1}$ is strongly measurable. For $m\in\mathbb{N}$, let $E_{m}=\big\{t:n(t)=m\big\}$. By the definition of $n(t)$, we have $$E_{m}=\big\{t:\phi_{x_{m}}(t)<\eta\big\}\cap \big(\cap_{j<m}\big\{t:\phi_{x_{j}}(t)\geq\eta\big\}\big).$$
Since each $\phi_{x}$ is measurable, it follows that the set $E_{m}$ is measurable for each $m\in \mathbb{N}$. By definition, $E_{m}$'s forms a disjoint collection of sets such that $[0, 1]=\bigcup_{m=1}^{\infty}E_{m}$. Therefore, it is easy to check that $\sum_{i=1}^{m}\chi_{E_{m}}x_m$ converges pointwise almost everywhere to $g_{1}$. Hence, $g_{1}$ is strongly measurable. Let $D_{1}=G-g_{1}$. Clearly, $D_{1}$ is a \textnormal{w}$^*$-measurable function from $[0, 1]$ to $X^{**}$. For each $t\in[0, 1]$, we have 
\begin{align*}
    \big\|D_{1}(t)\big\|&=\big\|G(t)-g_{1}(t)\big\|\\
    &=\big\|G(t)-x_{n(t)}\big\|\\
    &\leq \operatorname{Re}\left(\big(G(t)-x_{n(t)}\big)\Big(\frac{h(t)}{\big\|h(t)\big\|}\Big)\right)+\eta\\
    &=\operatorname{Re}\left(\big(G(t)-g_{1}(t)\big)\Big(\frac{h(t)}{\big\|h(t)\big\|}\Big)\right)+\eta\\
    &=\operatorname{Re}\left(D_{1}(t)\Big(\tfrac{h(t)}{\|h(t)\|}\Big)\right)+\eta.
\end{align*}
Similarly for every $\delta>0$, one can define a countably-valued and hence a strongly measurable function $g_{2}:[0, 1]\to S(X)$ such that $\operatorname{Re}\left(\frac{h(t)}{\big\|h(t)\big\|}(g_{2}(t))\right)>1-\delta$. Since $\big\|g_{2}(t)\big\|=1$ for each $t$, it follows that $g_{2}\in L^p(\mu, X)$.
    \end{step}
    \begin{step}
        In this step, we combine the functions $D_{1}$ and $g_{2}$ constructed in Step~\ref{S1} to produce a function $D$ from $[0, 1]\to X^{**}$ such that ``$G-D$'' is a strongly measurable function on $[0, 1]$ with values in $X$, and we partition the set $[0, 1]$ in such a way that in each piece the integrals $\int \big\|G(t)\big\|^p\,d\mu$,  $\int \big\|G(t)\big\|\big\|h(t)\big\|\,d\mu$ are sufficiently small. 

       We first describe the partition of $[0,1]$ that will be used throughout the proof. Let $\varepsilon>0$. Since $|h|$ is defined everywhere applying Lemma~\ref{L28} to functions $|G|^p$ and $|G||h|$, there exists $0<a<b<\infty$ such that 
       \begin{align}\label{EL211}
        \int_{S_{(a, b, |h|)}}\big\|G(t)\big\|^p\,d\mu< \Big(\frac{\varepsilon}{8}\Big)^p\quad\textnormal{and}\quad \int_{S_{(a, b, |h|)}}\big\|G(t)\big\|\big\|h(t)\big\|\,d\mu< \Big(\frac{\varepsilon}{8}\Big)^p<\frac{\varepsilon}{8}.
       \end{align}
       
    Suppose $A=S_{(a, b, |h|)}=\big\{t\in[0, 1]:0<\big\|h(t)\big\|<a \textnormal{ or }b<\big\|h(t)\big\|\big\}$. Then $A^c=\big\{t\in[0, 1]: a\leq\big\|h(t)\big\|\leq b\big\}$. By using an argument similar to the one used in the proof of Lemma~\ref{L28}, we can choose $M>0$ such that, on the set $B=\big\{t\in[0, 1]:a\leq\big\|h(t)\big\|\leq b, \big\|G(t)\big\|>M\textnormal{ or }\big\|D_{1}(t)\big\|>M \big\}$ the integrals $\int \big\|G(t)\big\|^p\,d\mu$,  $\int \big\|G(t)\big\|\big\|h(t)\big\|\,d\mu$ satisfy~\ref{EL211}. 
    Let $$B_{0}=\big\{t\in [0, 1]: a\leq\big\|h(t)\big\|\leq b, \big\|G(t)\big\|\leq M, \big\|D_{1}(t)\big\|\leq M \big\}.$$ Then, we have $B_{0}^c=A\cup B$ and
    \begin{align}\label{E2}
     \big|\Lambda\big(\chi_{B_{0}^c}G\big)-\int_{{B_{0}^c}}G\big(t\big)\big(h(t)\big)\,d\mu\big|&\leq \big|\Lambda\big(\chi_{B_{0}^c}G\big)\big|+\Big|\int_{{B_{0}^c}}G\big(t\big)\big(h(t)\big)\,d\mu\Big|\\
     &\leq \big\|\chi_{B_{0}^c}h\big\|_{q} \big\|\chi_{B_{0}^c}G\big\|_{p}+\int_{{B_{0}^c}}\big\|G(t)\big\|\big\|h(t)\big\| \nonumber\\
     &\leq  \big\|\chi_{B_{0}^c}G\big\|_{p}+\int_{{B_{0}^c}}\big\|G(t)\big\|\big\|h(t)\big\| \nonumber\\
     &\leq  \big\|\chi_{A}G\big\|_{p}+\big\|\chi_{B}G\big\|_{p}+\int_{{B_{0}^c}}\big\|G(t)\big\|\big\|h(t)\big\| \nonumber\\
     &\leq\frac{\varepsilon}{2}. \nonumber
    \end{align}
    Let $T\geq 2M$. Define a function $D:[0, 1]\to X^{**}$ by $D(t)=D_{1}(t)+Tg_{2}(t)$. We have $G-D=G-D_{1}-Tg_{2}=g_{1}-Tg_{2}$. Since $g_{1}$ and $g_{2}$ are both $X$-valued strongly measurable functions, it gives that $G-D$ satisfies the same. In the next step, for each $t\in [0, 1]$, we identify, the point $G(t)-D(t)$ with  its canonical image into $X^{**}$.
    
    \end{step}

    \begin{step}\label{S3}
        In this step we split $B_{0}$ into finitely many sets $C_{1},\dots,C_{k}$ such that, for each $i$, $\chi_{C_{i}}(G-D)\in L^p(\mu, X)$ and the following conditions hold:
\begin{enumerate}[label=\textup{(\roman*)}, leftmargin=*, itemsep=2pt]
 \item $\displaystyle
        \Lambda\big(\chi_{C_{i}}(G-D)\big)
        =\int_{C_{i}}\big(G(t)-D(t)\big)\big(h(t)\big)\,d\mu$
  \item $T-M\leq\big\|D(t)\big\|\leq T+M$ for every $t\in C_{i}$
  \item $\displaystyle
        \big\|D(t)\big\|\leq
        \operatorname{Re}\Big(D(t)\Big(\tfrac{h(t)}{\|h(t)\|}\Big)\Big)+\eta+T\delta$
        for every $t\in C_{i}$.
\end{enumerate}
Let $a$ and $b$ be as in Step~\ref{S1}. As $a<b$, we may choose $\delta'>0$ with $a(1+\delta')<b$. Suppose that $k$ is the smallest positive integer with
$a(1+\delta')^{k}\geq b$. For $1\leq i\leq k$ put
\[
B_{i}=\big[a(1+\delta')^{i-1},\, a(1+\delta')^{i}\big),
\qquad
C_{i}=\big\{t\in B_{0}:\|h(t)\|\in B_{i}\big\}.
\]
The sets $C_{1},\dots,C_{k}$ are pairwise disjoint with union $B_{0}$. By avoiding the null sets, we may assume that $\mu(C_{i})>0$ for each $i$. Since $G-D$ is strongly measurable and by definition, for each $t\in C_i$, $\big\|(G-D)(t)\big\|\leq 2M+T$, we get that $\chi_{C_{i}}(G-D)$ belongs to $L^p(\mu, X)$. Conditions
\textup{(i)} and \textup{(ii)} are readily verified for each of them. To verify the third condition, let $t\in C_{i}$. Then we have
\begin{align}
    \big\|D(t)\big\|&=\big\|D_{1}(t)+Tg_{2}(t)\big\|\\
    &\leq \big\|D_{1}(t)\big\|+T\nonumber.
\end{align}
From Step~\ref{S1}, we have that $\big\|D_{1}(t)\big\|\leq \operatorname{Re}\Big(D_{1}(t)\Big(\frac{h(t)}{\|h(t)\|}\Big)\Big)+\eta$. Thus, we obtain
\begin{align}\label{E:D}
\big\|D(t)\big\|
\leq\big\|D_{1}(t)\big\|+T
 &\leq\operatorname{Re}\Big(D_{1}(t)\Big(\frac{h(t)}{\|h(t)\|}\Big)\Big)+\eta+T\\
&=\operatorname{Re}\Big(D(t)\Big(\frac{h(t)}{\|h(t)\|}\Big)\Big)
  +T\Big(1-\operatorname{Re}\Big(\frac{h(t)}{\|h(t)\|}\big(g_{2}(t)\big)\Big)\Big)+\eta \nonumber \\
&\leq\operatorname{Re}\Big(D(t)\Big(\frac{h(t)}{\|h(t)\|}\Big)\Big)+T\delta+\eta ,\nonumber 
\end{align}
where the last inequality uses
$1-\operatorname{Re}\Big(\frac{h(t)}{\|h(t)\|}\big(g_{2}(t)\big)\Big)\leq\delta$ from
Step~\ref{S1}.
    \end{step}
    \begin{step}
        Finally, in this step we prove that $$\operatorname{Re}\big(\Lambda(G)\big)-\operatorname{Re}\Big(\int_{[0, 1]}G(t)\big(h(t)\big)\Big)<\varepsilon,$$
         for suitable choices of $\eta, \delta, \delta'$ and $T$. Since $G-D$ belongs to the space $L^p(\mu, X)$, we have
        $$\operatorname{Re}\big(\Lambda(\chi_{C_{i}}G-\chi_{C_{i}}D)\big)=\operatorname{Re}\left(\int_{C_{i}}(G(t)-D(t))(h(t))\,d\mu\right).$$ 
        Hence,
\begin{align}\label{E6}
   \operatorname{Re}\big(\Lambda(\chi_{C_{i}}G)\big)-\operatorname{Re}\left(\int_{C_{i}}G(t)(h(t))\,d\mu\right)=\operatorname{Re}\big(\Lambda(\chi_{C_{i}}D)\big)-\operatorname{Re}\left(\int_{C_{i}}\big\|(h(t))\big\|D(t)\Big(\frac{h(t)}{\|h(t)\|}\Big)\,d\mu\right)
\end{align}
Fix $1\leq i\leq k$ and put $c_{i}=a(1+\delta')^{i-1}$. By Remark~\ref{R29}, $\operatorname{Re}\big(\Lambda(\chi_{C_{i}}D)\big)
\leq\big\|\chi_{C_{i}}h\big\|_{q}\,\big\|\chi_{C_{i}}D\big\|_{p},$ and by
Step~\ref{S3}, for every $t\in C_{i}$,
\[
\big\|D(t)\big\|-\eta-T\delta
\leq\operatorname{Re}\Big(D(t)\Big(\tfrac{h(t)}{\|h(t)\|}\Big)\Big),
\qquad
c_{i}\leq\big\|h(t)\big\|\leq c_{i}(1+\delta').
\] 
Putting these values in equation~\ref{E6}, we get that
\begin{align}
    \operatorname{Re}\big(\Lambda(\chi_{C_{i}}G)\big)-\operatorname{Re}\left(\int_{C_{i}}G(t)(h(t))\,d\mu\right)&\leq c_{i}(1+\delta')\mu(C_{i})^{\frac{1}{q}}\big\|\chi_{C_{i}}D\big\|_{p}-c_{i}\int_{C_{i}}\big(\big\|D(t)\big\|-\eta-T\delta\big)\,d\mu.
\end{align}
For $1\leq i\leq k$, applying Lemma~\ref{L26} to the function $\chi_{C_{i}}D\in L^{p}(\mu, X^{**})$ and to the set $C_{i}$, we get 
        \begin{align*}
            \mu(C_{i})^{\frac{1}{q}}\big\|\chi_{C_{i}}|D|\big\|_{p}-\int_{C_{i}}\|D(t)\|\,d\mu\leq C_{p}\mu(C_{i})\frac{M^2}{T},
        \end{align*}
        where $C_{p}$ is a constant which depends on $p$. Therefore, we have
        \begin{align*}
            \operatorname{Re}\big(\Lambda(\chi_{C_{i}}G)\big)-\operatorname{Re}\left(\int_{C_{i}}G(t)(h(t))\,d\mu\right)&\leq c_{i}(1+\delta')\left(\int_{C_{i}}\|D(t)\|\,d\mu+C_{p}\mu(C_{i})\frac{M^2}{T}\right)\\
            &-c_{i}\int_{C_{i}}\big\|D(t)\big\|\,d\mu+c_{i}\mu(C_{i})(\eta+T\delta)\\
            &\leq c_{i}\mu(C_{i})\delta'(T+M)+c_{i}(1+\delta')C_{p}\mu(C_{i})\frac{M^2}{T}+c_{i}\mu(C_{i})(\eta+T\delta).
        \end{align*}
        We can choose $\delta'>0$ small enough such that $(1+\delta')<2$. This gives that
        \begin{align}\label{}
            \operatorname{Re}\big(\Lambda(\chi_{C_{i}}G)\big)-\operatorname{Re}\left(\int_{C_{i}}G(t)(h(t))\,d\mu\right)\leq c_{i}\mu(C_{i})\Big(\delta'(T+M)+2\big(C_{p}\frac{M^2}{T}+\eta+T\delta\big)\Big),    
        \end{align}
        for $1\leq i\leq k$. Since $B_{0}=\bigcup_{i=1}^{k}C_{i}$, we have
        \begin{align*}
            \operatorname{Re}\big(\Lambda(\chi_{B_{0}}G)\big)-\operatorname{Re}\left(\int_{B_{0}}G(t)(h(t))\,d\mu\right)&=\sum_{i=1}^{k}\left(\operatorname{Re}\big(\Lambda(\chi_{C_{i}}G)\big)-\operatorname{Re}\left(\int_{C_{i}}G(t)(h(t))\,d\mu\right)\right)\\
            &\leq \sum_{i=1}^{k}c_{i}\mu(C_{i})\Big(\delta'(T+M)+2\big(C_{p}\frac{M^2}{T}+\eta+T\delta\big)\Big).
        \end{align*}
        
       Since $c_{i}\leq \big\|h(t)\big\|$ for each $t\in C_{i}$, we have
     $$\sum_{i=1}^{k}c_{i}\mu(C_{i})\leq \int_{B_{0}}\|h(t)\|\,d\mu\leq \mu(B_{0})^{\frac{1}{p}}.$$ Also, we have $a\mu(B_{0})\leq \int_{B_{0}}\|h(t)\|\,d\mu\leq \mu(B_{0})^{\frac{1}{p}}$, and hence $\mu(B_{0})\leq a^{-q}$. Let $K=a^{-\frac{q}{p}}$. Then, we have 
       \begin{align*}
        \operatorname{Re}\big(\Lambda(\chi_{B_{0}}G)\big)-\operatorname{Re}\left(\int_{B_{0}}G(t)(h(t))\,d\mu\right)&\leq K\Big(\delta'(T+M)+2\big(C_{p}\frac{M^2}{T}+\eta+T\delta\big)\Big).
       \end{align*}
        Since the choices of $\delta',\eta$, $\delta$ and $T$ are independent, by choosing them $(\delta',\eta,\delta)$ small enough and $T$ large enough, we can make the right hand side of the above inequality less than $\frac{\varepsilon}{2}$. Combining this with~\eqref{E2}, we get 
        \begin{align*}
            \operatorname{Re}\big(\Lambda(G)\big)-\operatorname{Re}\int_{[0, 1]}G(t)(h(t))\,d\mu&<\varepsilon,
        \end{align*}
        Since $\varepsilon>0$ is arbitrary, we have
        $$\operatorname{Re}\big(\Lambda(G)\big)\leq\operatorname{Re}\int_{[0, 1]}G(t)(h(t))\,d\mu.$$
        Applying the same argument to $-G$ and $iG$, we get that 
        $$
        \Lambda(G)=\int_{[0, 1]}G(t)(h(t))\,d\mu.$$
    \end{step} 
Thus, $\hat{h}$ is the unique norm-preserving extension of $h$ from $L^{p}(\mu, X)$ to $L^{p}(\mu, X)^{**}$, and hence $h$ is a \textnormal{w}$^*$-\textnormal{w} PC of $S(L^{q}(\mu, X^*))$.
\end{proof}

We now give an example which shows that the converse of Theorem~\ref{T28} fails for $p=1$. Recall that a Banach space $X$ is said to be Hahn-Banach smooth if every element of the unit sphere $S(X^*)$ admits a unique norm-preserving extension from $X$ to $X^{**}$. Equivalently, every element of the unit sphere $S(X^*)$ is a \textnormal{w}$^*$-\textnormal{w} PC. If $L^{1}(\mu)$ is one dimensional, then $L^{1}(\mu, X)=X$, and therefore $L^{1}(\mu, X)$ is Hahn-Banach smooth if and only if $X$ is. If $\textnormal{dim}\big(L^{1}(\mu)\big)\geq 2$, then it is easy to see that $L^{1}(\mu, X)$ is Hahn-Banach smooth if and only if $L^{1}(\mu)$ is finite dimensional and $X$ is reflexive, that is, if and only if $L^{1}(\mu, X)$ is reflexive. 

\begin{ex}\label{E211}
    Let $X$ be a reflexive Banach space and let $L^1(\mu)$ is infinite dimensional. Since $L^{1}(\mu, X)$ is not Hahn-Banach smooth, there exists an element $h\in S(L^1(\mu, X)^*)$ which is not a \textnormal{w}$^*$-\textnormal{w} PC. Clearly, each unit vector $\frac{h(t)}{\|h(t)\|}$ with $h(t)\neq 0$ is a \textnormal{w}$^*$-\textnormal{w} PC of $(X^*)$, since $X$ is a reflexive Banach space.
\end{ex}
We now discuss the relationship between the \textnormal{w}$^*$- $\|\cdot\|$ PCs of $L^{p}(\mu,X)_{1}^*$ and those of $X_{1}^*$. It follows from Theorems~\ref{T11} and~\ref{T12} that when $h$ is a \textnormal{w}$^*$-$\|\cdot\|$ PC of $L^{p}(\mu,X)_{1}^*$, then $\frac{h(t)}{\|h(t)\|}$ is a \textnormal{w}$^*$-$\|\cdot\|$ PC of $X_{1}^*$ for almost every $t$ with $h(t)\neq 0$. We reach this conclusion only indirectly, as the argument goes through the characterization of \textnormal{w}$^*$-denting points. A direct proof can be given by following the proof of Theorem~\ref{T59}, with the set $C_{\varepsilon}$ replaced by the set
\[
    C'_{\varepsilon}
    = \Bigl\{ \bigl(x^{*}, (z_{k})_{k \ge 1}\bigr) \in S(X^{*}) \times K^{\mathbb{N}} :
      d(z_{k}, x^{*}) \le \tfrac{1}{k} \ \text{and} \
      \|z_{k} - x^{*}\| \ge \varepsilon \ \text{for all } k \ge 1 \Bigr\}.
\]
Since the argument is otherwise the same, we omit the details.


\begin{thm}\label{T513}
    Let $X$ be such that $X^*$ is separable. If $h\in L^{p}(\mu, X)_{1}^*$ is a \textnormal{w}$^*$- $\|\cdot\|$ PC, then $\frac{h(t)}{\|h(t)\|}$ is a \textnormal{w}$^*$- $\|\cdot\|$ PC in $S(X^*)$ for almost every $t$ with $h(t)\neq 0$.
\end{thm}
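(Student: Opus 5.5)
We follow the scheme of the proof of Theorem~\ref{T59}, with $C_{\varepsilon}$ replaced by the set $C'_{\varepsilon}$ defined just before the statement. Since $X^*$ is separable, $X$ is separable, $X^*$ has the RNP, and $L^{p}(\mu,X)^*=L^{q}(\mu,X^*)$. By Lemma~\ref{L57} we may restrict to $Z^c$; note that a w$^*$-$\|\cdot\|$ PC is in particular a w$^*$-w PC, so Lemma~\ref{L57} applies to the w$^*$-$\|\cdot\|$ case by the same argument, with norm convergence replacing weak convergence. Hence we may assume $h(t)\neq 0$ for every $t$.

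First we check that $V(X)$, the set of w$^*$-$\|\cdot\|$ PCs of $S(X^*)$, is w$^*$-$G_\delta$ in $X_1^*$ (the Remark after Lemma~\ref{L56}). Let $K=(X_1^*,\textnormal{w}^*)$, which is compact metrizable with metric $d$. The set of pairs $(x^*,(z_k))\in K\times K^{\mathbb{N}}$ with $d(z_k,x^*)\le 1/k$ and $\|z_k-x^*\|\ge\varepsilon$ for all $k$ is closed. Indeed, the norm is only w$^*$-lower semicontinuous, but $\{(x,z):\|z-x\|\le r\}$ is w$^*$-closed, so its complement $\{\|z-x\|>r\}$ is open. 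Taking $\|z_k-x^*\|>\varepsilon$ with $\varepsilon$ rational, or noting that $\{\|z-x\|\geq\varepsilon\}=\bigcap_n\{\|z-x\|>\varepsilon-1/n\}$ is a $G_\delta$, the set is at least Borel, in fact $\sigma$-compact as a union over rational $\varepsilon$. Its projection $\pi(C'_\varepsilon)$ is then analytic or $\sigma$-compact, and $S(X^*)\setminus V(X)=\bigcup_{\varepsilon\in\mathbb{Q}^+}\pi(C'_\varepsilon)\cap S(X^*)$. This holds because a point $x^*$ fails to be a w$^*$-$\|\cdot\|$ PC exactly when some sequence $z_k\to x^*$ w$^*$ has $\|z_k-x^*\|\ge\varepsilon$; sequences suffice since $K$ is metrizable. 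Consequently $A=\{t:\frac{h(t)}{\|h(t)\|}\notin V(X)\}$ is Lebesgue measurable, since preimages of analytic sets under measurable maps are universally measurable and $\mu$ is complete.

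Suppose $\mu(A)>0$. Choose $\varepsilon$ with $\mu(B)>0$, where $B=(h/|h|)^{-1}(\pi(C'_\varepsilon))$. The set $\{(t,(z_k)):(h(t)/\|h(t)\|,(z_k))\in C'_\varepsilon\}$ is measurable in $B\times K^{\mathbb{N}}$, so von Neumann's selection theorem gives a measurable $g=(z_k^*(\cdot))$ on $B$, up to a null set. Put $h_k=\|h\|z_k^*$ on $B$ and $h_k=h$ off $B$. By Pettis' theorem, as in Theorem~\ref{T59}, each $h_k$ lies in $L^{q}(\mu,X^*)$ with $\|h_k(t)\|\le\|h(t)\|$, so $\|h_k\|\le 1$. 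Dominated convergence gives $h_k\to h$ weak$^*$. On the other hand,
\[
\|h_k-h\|_q\ \ge\ \|h_k-h\|_1\ \ge\ \varepsilon\int_B\|h(t)\|\,d\mu>0 ,
\]
where the first inequality holds because $\mu$ is a probability measure; for $p=1$ we use the ess sup instead. So $h_k\not\to h$ in norm.

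This sequence lies in $L^p(\mu,X)_1^*$ rather than in $S(L^{p}(\mu,X)^*)$. That is harmless if the statement is read for the ball $L^{p}(\mu,X)_1^*$. Otherwise we normalize and set $\tilde h_k=h_k/\|h_k\|$. By w$^*$-lower semicontinuity, $\liminf\|h_k\|\ge 1$, so $\|h_k\|\to 1$ and $\tilde h_k\to h$ weak$^*$ while $\|\tilde h_k-h\|\not\to 0$. This contradicts the hypothesis, hence $\mu(A)=0$.

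The main obstacle is the measurability and selection step. For the w$^*$-$\|\cdot\|$ case, $C'_\varepsilon$ need not be closed, because the norm condition $\|z_k-x^*\|\ge\varepsilon$ is only Borel ($G_\delta$) in the w$^*$-topology. So we must argue with Borel and analytic sets and universal measurability, rather than with the compactness used in Lemma~\ref{L56}, before invoking von Neumann's selection theorem.
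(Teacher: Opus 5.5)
Your proposal is correct and follows the route the paper indicates: rerun the proof of Theorem~\ref{T59} with $C'_{\varepsilon}$ in place of $C_{\varepsilon}$, using a measurable selection, $h_k=\|h\|z_k^*$ on $B$, dominated convergence for w$^*$-convergence, the bound $\|h_k-h\|_q\ge\varepsilon\int_B\|h\|\,d\mu$, and normalization if one works on the sphere. Your extra care with measurability is well placed, since $C'_{\varepsilon}$ is in general only a $G_\delta$ in $K\times K^{\mathbb{N}}$ and not closed, so your Borel, analytic and universal-measurability argument, combined with a selection theorem for measurable graphs, is what actually carries that step; just drop the stray assertions that this set is ``closed'' and ``$\sigma$-compact'', which are false in general and not needed.
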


Example~\ref{E511} shows that the converse of Theorem~\ref{T513} fails in general. With an additional assumption, it becomes true, see the following remark.

\begin{rem}
    As mentioned in the introduction, if a point $x^*\in X_{1}^*$ is a \textnormal{w}$^*$- $\|\cdot\|$ PC and an extreme point of $X_{1}^*$, then it is a \textnormal{w}$^*$-denting point. Let $h\in L^{q}(\mu, X^*)$ be such that $\frac{h(t)}{\|h(t)\|}$ is both a \textnormal{w}$^*$-$\|\cdot\|$ PC and an extreme point of $X_{1}^*$ for almost every $t$ with $h(t)\neq 0$. Then $\frac{h(t)}{\|h(t)\|}$ is a \textnormal{w}$^*$-denting point of $X_{1}^*$ for almost every $t$. Hence, by Theorem~\ref{T11} and Theorem~\ref{T12}, $h$ is a \textnormal{w}$^*$- $\|\cdot\|$ PC of $L^{p}(\mu, X)_{1}^*$.
\end{rem}

 The following example shows that the extremality assumption on image points $\frac{h(t)}{\|h(t)\|}$ cannot be dropped. We recall the Rademacher functions $r_{n} : [0,1] \to \mathbb{R}$
(see \cite{JL}), defined by $r_{n}(t) = \operatorname{sign}\bigl(\sin(2^{n}\pi t)\bigr)$ for $n \in \mathbb{N}.$
It is well known that
\[
    \int_{0}^{1} r_{n}(t)\,\psi(t) \, d\mu \longrightarrow 0
    \quad \text{as } n \to \infty, \ \text{for every } \psi \in L^{1}(\mu)
\]
(see \cite[Proposition~1.d.1]{JL}), that is, $r_{n} \to 0$ in the
$\textnormal{w}^{*}$-topology of $L^{\infty}(\mu)$. We use this fact in the following
example.

\begin{ex}\label{E511}
    Let $X=c_{0}$, so that $X^{*}=\ell^{1}$ is a separable Banach space. Let $\mu$ be the Lebesgue measure on
$[0,1]$. Then
$L^{1}(\mu,c_{0})^{*}=L^{\infty}(\mu,\ell^{1})$. Put
\[  h(t)\equiv\tfrac{1}{2}(e_{1}+e_{2}),\qquad
  v=\tfrac{1}{2}(e_{1}-e_{2}),
\]
so that $\|h(t)\|_{1}=\|v\|_{1}=1$ for every $t$ and hence $\|h\|_{\infty}=1$. Since weak$^*$ and norm topologies coincide on $S(\ell^{1})$, the function $h(t)$ is a \textnormal{w}$^*$-\textnormal{w} PC of $S(\ell^{1})$ for every $t\in[0,1]$. Let $(h_{n})$ be a sequence defined by
\[
  h_{n}(t)=h+r_{n}(t)\,v=
  \begin{cases}
    e_{1}, & r_{n}(t)=1,\\
    e_{2}, & r_{n}(t)=-1,
  \end{cases}
\]
satisfies $\|h_{n}(t)\|_{1}=1$ for every $t$, whence $h_{n}\in S(L^{\infty}(\mu,\ell^{1}))$. Suppose $f\in L^{1}(\mu,c_{0})$ and define $\psi:[0,1]\to\mathbb{C}$ by
$\psi(t)=\langle v,f(t)\rangle$. Then $|\psi(t)|\le\|v\|_{1}\|f(t)\|_{\infty}$,
so $\psi\in L^{1}(\mu)$, and
\[
  \langle h_{n}-h,f\rangle=\int_{0}^{1}r_{n}(t)\,\psi(t)\,d\mu\longrightarrow0 .
\]
Hence $h_{n}\to h$ in the \textnormal{w}$^{*}$-topology of $L^{\infty}(\mu,\ell^{1})$. Since $\|h_{n}(t)-h(t)\|=\|v\|_{1}=1$ for all $t\in [0, 1]$, we get that $\|h_{n}-h\|_{q}=1\not\to 0 $.
\end{ex}


 

\section{Very smooth points of $L^{p}(\mu, X)$}
In this section, we apply the results on \textnormal{w}$^*$-\textnormal{w} PCs obtained in the previous section to study very smooth points of $L^{p}(\mu, X)$. We provide a characterization of very smooth points of $L^{p}(\mu, X)$ for $1<p<\infty$ in terms of the very smooth points of $X$ under the assumption that $X^*$ is separable.

For a point $x\in S(X)$, we denote the associated state space by $S_{x}=\{x^*\in S(X^*): x^*(x)=1\}$ and for $y\in S(Y)$, we denote the associated state space by $S_{y}^{Y^*}=\{y^*\in S(Y^*): y^*(y)=1\}$. Recall from Definitions~\ref{D1} and~\ref{D2} that a point $x \in S(X)$ is called a smooth point if $S_{x}$ is a singleton, and a very smooth point if, in addition, $x$ remains a smooth point of $X^{**}$ under the canonical embedding. Let $Y\subseteq X$ be a closed subspace of $X$. In \cite[Lemma~2.7]{SD1}, the authors have shown that if $Y$ is $L$-summand in $X$, then no point of $S(Y)$ can be a smooth point of $S(X)$. 

In the next lemma, we recall from \cite{DK} the stability of smooth points in $\ell^{p}$-summand subspaces of $X$ for $1<p<\infty$. We include a proof for the sake of sake completeness.

\begin{lem}\label{L515}
    Let $Y\subseteq X$ be a closed subspace of $X$ such that $X=Y\oplus_{p}Z$ for some closed subspace $Z$ of $X$. Let $x\in S(X)$ be such that $x=(y, z)$, where $y\in Y$, $z\in Z$. Then the following statements are true.
    \bla 
    \item If $y$ and $z$ are both non-zero, then $x$ is a smooth point of $X$ if and only if $\frac{y}{\|y\|}$ is a smooth point of $Y$ and $\frac{z}{\|z\|}$ is a smooth point of $Z$.
    
    \item If $y=0$, then $x$ is a smooth point of $X$ if and only if $z$ is a smooth point of $Z$.
    \el 
\end{lem}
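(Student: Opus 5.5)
The plan is to describe the state space $S_x$ explicitly in terms of $S_y^{Y^*}$ and $S_z^{Z^*}$, using the duality $X^*=Y^*\oplus_q Z^*$ together with the equality case of H\"older's inequality in $\mathbb{R}^2$. Write $a=\|y\|$ and $b=\|z\|$, so that $a^p+b^p=1$. Take a functional $x^*=(y^*,z^*)\in S(X^*)$ with $x^*(x)=1$. Then
\begin{align*}
1=\operatorname{Re}y^*(y)+\operatorname{Re}z^*(z)\leq \|y^*\|\,a+\|z^*\|\,b\leq \big(\|y^*\|^q+\|z^*\|^q\big)^{1/q}\big(a^p+b^p\big)^{1/p}=1 .
\end{align*}
Hence every inequality in this chain is an equality. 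Equality in the first two steps forces $y^*(y)=\|y^*\|\,a$ and $z^*(z)=\|z^*\|\,b$. Equality in the H\"older step (for $1<p<\infty$) forces $(\|y^*\|,\|z^*\|)=(a^{p-1},b^{p-1})$, since this is the unique unit vector in $\ell^q_2$ norming $(a,b)$.

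With this in hand I will prove (a). When $a,b>0$, the functional $x^*$ lies in $S_x$ if and only if $x^*=(a^{p-1}u^*,\,b^{p-1}v^*)$ with $u^*\in S^{Y^*}_{y/a}$ and $v^*\in S^{Z^*}_{z/b}$. Sufficiency is a direct computation: $x^*(x)=a^p+b^p=1$ and $\|x^*\|_q=(a^{(p-1)q}+b^{(p-1)q})^{1/q}=1$. Necessity follows from the equality analysis above. Since $a^{p-1}$ and $b^{p-1}$ are nonzero, the map $(u^*,v^*)\mapsto x^*$ is a bijection from $S^{Y^*}_{y/a}\times S^{Z^*}_{z/b}$ onto $S_x$. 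Both state spaces are nonempty by Hahn--Banach, so $S_x$ is a singleton exactly when each factor is a singleton.

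For (b), take $y=0$, so that $b=1$ and $z\in S(Z)$. The equality analysis now gives $\|y^*\|=a^{p-1}=0$, so $y^*=0$ and $\|z^*\|=1$ with $z^*(z)=1$. Thus $S_x=\{(0,v^*):v^*\in S_z^{Z^*}\}$, and $S_x$ is a singleton exactly when $S_z^{Z^*}$ is.

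The only delicate point is the H\"older equality case. Strict convexity of $\ell^q_2$ for $1<q<\infty$ gives a unique norming vector, and this is exactly where $p=1$ breaks down: that is the $L$-summand situation of \cite[Lemma~2.7]{SD1}. In the complex case, equality $\operatorname{Re}y^*(y)=\|y^*\|\|y\|$ already gives $y^*(y)=\|y^*\|\|y\|$, so the phases cause no trouble.
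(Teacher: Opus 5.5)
Your proof is correct and follows the paper's own argument. Both proofs identify $S_x$ with $\|y\|^{p-1}S^{Y^*}_{y/\|y\|}\times\|z\|^{p-1}S^{Z^*}_{z/\|z\|}$, using the duality $X^*=Y^*\oplus_q Z^*$ and the equality case of H\"older's inequality in $\mathbb{R}^2$. Your write-up is slightly more careful in three places: it takes real parts to handle the complex case, it notes that both state spaces are nonempty, and it derives part (b) from the same equality analysis (giving $\|y^*\|=0$), where the paper only asserts the description of $S_x$, with the roles of $y$ and $z$ swapped.
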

\begin{proof} $(a)$ In this case, we prove that $$S_{x}=\|y\|^{p-1}S_{\frac{y}{\|y\|}}^{Y^*} \times \|z\|^{p-1}S_{\frac{z}{\|z\|}}^{Z^*}.$$ Suppose $x^*\in S_{x}$. We write $x^*=(y^*, z^*)$ for some $y^*\in Y^*$ and $z^*\in Z^*$. Then, we have $y^*(y)+z^*(z)=1$. Since $\|x\|=1$, we have that $\|y\|^{p}+\|z\|^{p}=1$. By the H\"{o}lder's inequality, we get 
\begin{align}\label{E7}
    1=y^*(y)+z^*(z)&\leq \|y^*\|\|y\|+\|z^*\|\|z\| \nonumber\\
    &\leq (\|y^*\|^{q}+\|z^*\|^{q})^{1/q}\cdot (\|y\|^{p}+\|z\|^{p})^{1/p}=1.
\end{align}
    This implies that equality holds in the H\"{o}lder's inequality. So the coordinates $(\|y^*\|^q, \|z^*\|^q)$ and $(\|y\|^p, \|z\|^p)$ are proportional, that is, $(\|y^*\|^q, \|z^*\|^q)=\lambda(\|y\|^p, \|z\|^p)$ for some $\lambda>0$. Since $\|x^*\|=\|x\|=1$, it gives that $\lambda=1$. Therefore, we have $\|y^*\|=\|y\|^{p-1}$ and $\|z^*\|=\|z\|^{p-1}$. It follows from ~\ref{E7} that $y^*(y)=\|y^*\|\|y\|$ and $z^*(z)=\|z^*\|\|z\|$. Thus, the points $\frac{y}{\|y\|}\in S(Y)$ and $\frac{z}{\|z\|}\in S(Z)$ attains its norm at $\frac{y^*}{\|y\|^{p-1}}\in S(Y^*)$ in $Y$ and $\frac{z^*}{\|z\|^{p-1}}\in S(Z^*)$ respectively. Hence we have that $S_{x}\subseteq \|y\|^{p-1}S_{\frac{y}{\|y\|}}^{Y^*} \times \|z\|^{p-1}S_{\frac{z}{\|z\|}}^{Z^*}$. The converse inclusion can be proved similarly. Thus, $x$ is a smooth point of $X$ if and only if $\frac{y}{\|y\|}$ is a smooth point of $Y$ and $\frac{z}{\|z\|}$ is a smooth point of $Z$.

$(b)$ If $z=0$, then we have that $x=y\in S(Y)$. In this case, it is easy to see that $S_{x}=S_{y}^{Y^*} \times \{0\}$. Thus, $x$ is a smooth point of $X$ if and only if $y$ is a smooth point of $Y$.
\end{proof}
The following remark, an immediate consequence of Lemma~\ref{L515}, gives an
analogous characterization of the very smooth points of $X$ when $X$ admits an
$\ell_{p}$-decomposition for some $1<p<\infty$.
\begin{rem}\label{R516}
Let $1<p<\infty$ and let $Y\subseteq X$ be a closed subspace of $X$ such that $X=Y\oplus_{p}Z$ for some closed subspace $Z$ of $X$. Let $x\in S(X)$ be a smooth point such that $x=(y, z)$ for some $y\in Y$ and $z\in Z$. Suppose that $y$ and $z$ both are non-zero. By duality, we have $X^{**}=Y^{**}\oplus_{p}Z^{**}$. Therefore, by Lemma~\ref{L515}, the point $x$ smooth is in $X^{**}$ if and only if $\frac{y}{\|y\|}$ is smooth in $Y^{**}$ and $\frac{z}{\|z\|}$ is smooth in $Z^{**}$. Hence $x$ is a very smooth point of $X$ if and only if $\frac{y}{\|y\|}$ is a very smooth point of $Y$ and $\frac{z}{\|z\|}$ is a very smooth point of $Z$. A similar argument can be applied when either $y$ or $z$ is zero.  
\end{rem}

We use the relation of very smooth points and \textnormal{w}$^*$-\textnormal{w} PCs together with Lemma~\ref{L515} and Remark~\ref{R516} to establish the following theorem. 
\begin{thm}
    Let $1\leq p<\infty$, $X$ be a Banach space such that $X^*$ is separable and let $f\in S(L^{p}(\mu, X))$. If $f$ is a very smooth point of $L^{p}(\mu, X)$, then $\frac{f(t)}{\|f(t)\|}$ is a very smooth point of $X$ for almost every $t\in [0, 1]$ with $f(t)\neq 0$. The converse does hold for $1<p<\infty$.
\end{thm}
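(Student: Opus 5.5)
The plan is to reduce everything to the earlier results on \textnormal{w}$^*$-\textnormal{w} PCs via Lemma~\ref{L15}. That lemma says a point is very smooth exactly when it is smooth and its unique norming functional is a \textnormal{w}$^*$-\textnormal{w} PC of the unit sphere of the dual. Since $X^*$ is separable, $X^*$ has the RNP and $L^{p}(\mu,X)^*=L^{q}(\mu,X^*)$ (for $p=1$, $L^\infty(\mu,X^*)$). So I first identify the norming functional of $f$ explicitly. For $1<p<\infty$, I use the Hölder equality argument of Lemma~\ref{L515}, run pointwise. A functional $h\in S(L^{q}(\mu,X^*))$ norms $f$ if and only if two things hold for a.e.\ $t$: first, $\|h(t)\|=\|f(t)\|^{p-1}$; second, whenever $f(t)\neq0$, $h(t)/\|h(t)\|$ is a norming functional of $f(t)/\|f(t)\|$. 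For $p=1$ the norm condition is replaced by $\|h(t)\|=1$ on $\{f\neq 0\}$, while $h$ is arbitrary in the unit ball on $\{f=0\}$.

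\emph{Forward direction.} Let $f$ be very smooth, with norming functional $h$. First I show $f(t)/\|f(t)\|$ is smooth for a.e.\ $t$ with $f(t)\neq0$; this is \cite[Theorem~1.1]{DK}. Alternatively, suppose it failed on a set of positive measure. Using separability of $X^*$, I would measurably select a second norming functional $k(t)$ on that set, as in the selection argument of Theorem~\ref{T59} (the set $\{(t,x^*):x^*(f(t)/\|f(t)\|)=1\}$ has measurable graph). Modifying $h$ there would give a second norming functional of $f$, a contradiction. Next, by Lemma~\ref{L15}, $h$ is a \textnormal{w}$^*$-\textnormal{w} PC of $S(L^{q}(\mu,X^*))$. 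Theorem~\ref{T59}, which covers $p\ge1$ in its proof, then gives that $h(t)/\|h(t)\|$ is a \textnormal{w}$^*$-\textnormal{w} PC of $S(X^*)$ for a.e.\ $t$ with $h(t)\neq0$. On $\{f\neq0\}$ we have $h(t)\neq0$, and $h(t)/\|h(t)\|$ is the unique norming functional of $f(t)/\|f(t)\|$. Lemma~\ref{L15}, applied in $X$, then shows $f(t)/\|f(t)\|$ is very smooth. For $p=1$ I must make sure the proof of Theorem~\ref{T59} does not use $h(t)\neq 0$ everywhere: I restrict to $\{f\neq0\}$, which is a subset of $\{h\neq0\}$, and apply Case~2 there.

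\emph{Converse, $1<p<\infty$.} Assume $f(t)/\|f(t)\|$ is very smooth for a.e.\ $t$ with $f(t)\neq0$. By \cite[Theorem~1.1]{DK} (or the pointwise Hölder characterization above), $f$ is smooth. Its unique norming functional is $h(t)=\|f(t)\|^{p-1}x_t^*$, where $x_t^*$ is the unique norming functional of $f(t)/\|f(t)\|$, and $h(t)=0$ on $\{f=0\}$. By Lemma~\ref{L15} in $X$, each $x_t^*=h(t)/\|h(t)\|$ is a \textnormal{w}$^*$-\textnormal{w} PC of $S(X^*)$. Since $X^*$ is separable, $X$ is separable and $X^*$ has the RNP. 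Theorem~\ref{T28} therefore applies and gives that $h$ is a \textnormal{w}$^*$-\textnormal{w} PC of $S(L^{q}(\mu,X^*))$. Lemma~\ref{L15} then yields that $f$ is very smooth.

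The main obstacle is the measurability needed in the forward direction, namely showing that the failure of pointwise smoothness on a positive-measure set produces a genuinely measurable second norming functional in $L^{q}(\mu,X^*)$. I would first try von Neumann selection, exactly as in the proof of Theorem~\ref{T59}, on the w$^*$-compact metrizable ball $K$. If that becomes awkward, I would simply cite \cite[Theorem~1.1]{DK} for the smoothness part.
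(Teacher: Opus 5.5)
Your proposal is correct and follows the paper's proof. Lemma~\ref{L15} turns very smoothness into the norming functional being a \textnormal{w}$^*$-\textnormal{w} PC, Theorem~\ref{T59} gives the forward direction, and Theorem~\ref{T28} gives the converse for $1<p<\infty$ (the paper's text cites Theorem~\ref{T59} at that point, but Theorem~\ref{T28} is the result actually needed, as you have it). The remaining differences are only bookkeeping: you read off the pointwise form of the norming functional ($\|h(t)\|=\|f(t)\|^{p-1}$, or $\|h(t)\|=1$ on $\{f\neq 0\}$ when $p=1$, with $h(t)/\|h(t)\|$ norming $f(t)/\|f(t)\|$) from the equality case of H\"older's inequality and let Theorem~\ref{T59} handle a positive-measure zero set of $f$, whereas the paper cites external results for these facts and splits off $\{f=0\}$ using the $L$-summand lemma for $p=1$ and Remark~\ref{R516} for $1<p<\infty$.
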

\begin{proof}
    Let $X$ be such that $X^*$ is separable and let $f\in S(L^{p}(\mu, X))$ be a very smooth point. Suppose $Z=\{t\in [0, 1]: f(t)=0\}$.
    
    \smallskip
\begin{case} $p=1$. 
   We first observe that $\mu(Z)=0$. Indeed, let $\mu(Z)>0$. Then we have $$L^{1}(\mu, X)=L^1(Z, X)\oplus_{1}L^1(Z^c, X).$$ We may consider $f$ as an element of $L^1(Z^c, X)$. Then by \cite[Lemma~2.7]{SD1}, we get that $f$ is not a smooth point of $L^{1}(\mu, X)$ and hence, it not a very smooth point. This is a contradiction. Therefore, $\mu(Z)=0$. Since $f$ is a very smooth point, there exists a unique $g\in S(L^{1}(\mu, X)^*)$ such that $g(f)=\int_{0}^{1}g(t)(f(t))\, dt = 1$ and $g$ is a \textnormal{w}$^*$-\textnormal{w} PC of $S(L^{1}(\mu, X)^*)$ (see Lemma~\ref{L15}). It follows from \cite[Theorem~2.3]{SD1} that $\frac{f(t)}{\|f(t)\|}$ is a smooth point of $X$ for almost every $t\in [0, 1]\setminus Z$. Since $\|f(t)\|\neq 0$ almost everywhere, by \cite[Remark~3.2]{SD4}, we have that $g(t)\neq 0$ whenever $f(t)\neq 0$ and $\frac{g(t)}{\|g(t)\|}$ is a unique linear functional in $S(X^*)$ such that $\frac{g(t)}{\|g(t)\|}\bigl(\frac{f(t)}{\|f(t)\|}\bigr) = 1$ for almost every $t\in [0, 1]\setminus Z$. Since $X^*$ is separable and $g$ is a \textnormal{w}$^*$-\textnormal{w} PC of $S(L^{1}(\mu, X)^*)$, by Theorem~\ref{T59} we get that $\frac{g(t)}{\|g(t)\|}$ is a \textnormal{w}$^*$-\textnormal{w} PC of $S(X^*)$ for almost every $t\in [0, 1]\setminus Z$. Hence $\frac{f(t)}{\|f(t)\|}$ is a very smooth point of $S(X)$ for almost every $t\in [0, 1]\setminus Z$.
\end{case}  

\smallskip
\begin{case} $1<p<\infty$. We first note that all the results that are used in case~1, are also true for $1<p<\infty$. If $\mu(Z)=0$, then an argument similar to the one use in the case $p=1$ gives that $\frac{f(t)}{\|f(t)\|}$ is a very smooth point of $S(X)$ for almost every $t\in [0, 1]\setminus Z$. If $\mu(Z)>0$, then we have $L^{p}(\mu, X)=L^{p}(Z^c, X)\oplus_{p} L^{p}(Z, X)$. We consider $f$ as an element of $L^{p}(Z^c, X)$. Since $f$ is a very smooth point of $L^{p}(\mu, X)$, it follows from Remark~\ref{R516}, that $f$ is a very smooth point of $L^{p}(Z^c, X)$. Therefore, we get that $\frac{f(t)}{\|f(t)\|}$ is a very smooth point of $X$ for almost every $t\in [0, 1]\setminus Z$. This completes the proof of the necessary part.

Let $1<p<\infty$ and let $\frac{f(t)}{\|f(t)\|}$ is a very smooth point of $X$ for almost every $t\in [0, 1]\setminus Z$. Then, by \cite[Theorem~1.1]{DK}, we have that $f$ is a smooth point of $L^{p}(\mu, X)$. Let $g\in  S\big(L^{p}(\mu, X)^*\big)$ be the unique norming functional of $f$. As in case~1, we get that $\frac{g(t)}{\|g(t)\|}$ is a unique norming functional of $\frac{f(t)}{\|f(t)\|}$. Since $\frac{f(t)}{\|f(t)\|}$ is a very smooth point of $X$ for almost every $t\in [0, 1]\setminus Z$, it follows from Lemma~\ref{L15} that $\frac{g(t)}{\|g(t)\|}$ is a \textnormal{w}$^*$-\textnormal{w} PC of $S(X^*)$ for almost every $t\in [0, 1]\setminus Z$. It follows from Theorem~\ref{T59} that $g$ is a \textnormal{w}$^*$-\textnormal{w} PC of $ S\big(L^{p}(\mu, X)^*\big)$. Hence, $f$ is a very smooth point of $L^{p}(\mu, X)$. This completes the proof.
\end{case}
\end{proof}

\begin{rem}\label{R34}
The converse of the above theorem is not true for $p=1$ (see, \cite[Example~3.16]{SD4}).
\end{rem}
For $1\leq p<\infty$ and a Banach space $X$, it is well known that the space $L^{q}(\mu, X^*)$ is isometrically isomorphic to a subspace of $L^{p}(\mu, X)^*$. An argument similar to the one use in the proof of \cite[Lemma~3.11]{SD4} shows that the dual unit ball $L^{q}(\mu, X^*)_{1}$ is \textnormal{w}$^*$-dense in $L^{p}(\mu, X)_{1}^*$. Therefore, \textnormal{w}$^*$-\textnormal{w} PCs in $L^{p}(\mu, X)_{1}^*$ belong to $L^{q}(\mu, X^*)_{1}$. This observation leads to the following theorem.

\begin{thm}
    If each unit vector in $L^{p}(\mu, X)$ attains its norm some \textnormal{w}$^*$-\textnormal{w} PC in $L^{p}(\mu, X)_{1}^*$, then $X^*$ has the RNP.
\end{thm}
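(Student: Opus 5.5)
The plan is to show that $L^{p}(\mu, X)^* = L^{q}(\mu, X^*)$. The Radon--Nikodym property of $X^*$ then follows from the known converse to \cite[Theorem~IV.1.1]{DU}: for a non-atomic finite measure, the canonical embedding $L^{q}(\mu, X^*)\subseteq L^{p}(\mu, X)^*$ is onto exactly when $X^*$ has the RNP with respect to $\mu$. (For $p=1$ the target space is $L^\infty(\mu, X^*)$.) So the task reduces to proving that every functional on $L^{p}(\mu, X)$ is represented by a function in $L^{q}(\mu, X^*)$.

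\textbf{Step 1.} We use the observation made just before the statement: since $L^{q}(\mu, X^*)_{1}$ is w$^*$-dense in $L^{p}(\mu, X)_{1}^*$, every w$^*$-w PC of $L^{p}(\mu, X)_{1}^*$ lies in $L^{q}(\mu, X^*)_{1}$. Indeed, such a point $\phi$ is the w$^*$-limit of a net from $L^{q}(\mu, X^*)_{1}$. By continuity at $\phi$, the net also converges to $\phi$ weakly. Since $L^{q}(\mu, X^*)$ is a norm closed subspace, it is weakly closed, so $\phi$ belongs to it. By hypothesis, every $f\in S(L^{p}(\mu, X))$ therefore attains its norm at some $g_f\in S(L^{q}(\mu, X^*))$.

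\textbf{Step 2.} Consequently every norm-attaining functional on $L^{p}(\mu, X)$ lies in $L^{q}(\mu, X^*)$. To see this, let $\phi\in S(L^{p}(\mu,X)^*)$ attain its norm at $f$. We would like $\phi=g_f$, but this needs $f$ to be smooth, which we do not know.

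The way around this is the Bishop--Phelps theorem. Norm-attaining functionals are norm dense in $L^{p}(\mu,X)^*$, and $L^{q}(\mu, X^*)$ is norm closed. So it suffices to show that the norm-attaining functionals belong to $L^{q}(\mu,X^*)$.

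Handling norm-attaining functionals that are not the specified $g_f$ is the main obstacle. The first thing I would try is a Bishop--Phelps--Bollob\'as type argument: approximate $\phi$ by functionals of the form $g_{f'}$. Here $f'$ is taken to be a smooth point near $f$ (Mazur: in the separable setting smooth points are dense, or one can use a Gâteaux-generic subspace argument). At a smooth $f'$ the unique norming functional is $g_{f'}\in L^{q}(\mu,X^*)$.

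\textbf{Step 3.} The support functionals at smooth points are norm dense in the dual if one uses the Bishop--Phelps--Bollob\'as form. However, I expect the cleaner route to be the following. The set of functionals attaining their norm at a smooth point is norm dense in $S(L^{p}(\mu,X)^*)$; this is the Bishop--Phelps variant due to Phelps, which uses the density of Gâteaux points for the relevant subspaces. By Step 1, each such functional is the unique norming functional $g_{f'}\in L^{q}(\mu,X^*)$. Taking norm closures then gives $L^{p}(\mu,X)^*=L^{q}(\mu,X^*)$, and the RNP converse mentioned above finishes the proof.

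If the density of smooth-point supports is problematic in full generality, I would fall back to the following. Assume $X^*$ fails the RNP, and take a bounded $X^*$-valued vector measure of bounded variation that has no density. Build from it a functional $\phi\notin L^{q}(\mu,X^*)$ whose norm is attained at a smooth point $f$. This contradicts Step 1.
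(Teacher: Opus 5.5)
\textbf{Verdict: the proposal has a genuine gap.} Step~1 is correct; Steps~2--3 and the fallback rest on a claim that is false.

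Your Step~1 is exactly the first half of the paper's proof. The w$^*$-density of $L^{q}(\mu,X^*)_1$ in $L^{p}(\mu,X)_1^*$ puts every w$^*$-w PC inside $L^{q}(\mu,X^*)_1$. Hence every unit vector of $L^{p}(\mu,X)$ attains its norm on $L^{q}(\mu,X^*)_1$. The paper stops there and quotes a James-type theorem for Bochner spaces ([JR, Corollary~1.2]): if every element of $L^{p}(\mu,X)$ attains its norm on the unit ball of $L^{q}(\mu,X^*)$, then $X^*$ has the RNP. Your Steps~2--3 try to prove this implication yourself, and that is where the argument fails.

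The ``Bishop--Phelps variant'' you invoke says that functionals attaining their norm at smooth points are norm dense in the dual sphere. This is not a theorem, and it fails even in separable spaces. In $C[0,1]$ smooth points are dense by Mazur, but their norming functionals are unimodular multiples of point masses $\delta_t$. The closed linear span of these is the space of purely atomic measures, a proper subspace of $M[0,1]$. Bishop--Phelps--Bollob\'as gives a nearby pair $(f',\phi')$ with $\phi'(f')=1$, but nothing forces $f'$ to be smooth. Without smoothness you cannot identify $\phi'$ with $g_{f'}$.

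The approach breaks down completely when $X$ is not separable, which the statement allows. Take $X=\ell^{1}(\Gamma)$ with $\Gamma$ uncountable; then $L^{p}(\mu,X)$ has no smooth points at all. Each $f$ takes values in $\ell^{1}(\Gamma_0)$ for some countable $\Gamma_0$. For $\gamma\notin\Gamma_0$, the contractions $x\mapsto P_{\Gamma_0}x+x_{\gamma}y_0$ (with $y_0\in\ell^{1}(\Gamma_0)$, $\|y_0\|\le 1$), applied pointwise, fix $f$. Composing a norming functional of $f$ with them yields distinct norming functionals.

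Since $\ell^{\infty}(\Gamma)$ fails the RNP, the theorem asserts that the hypothesis fails for this $X$. Proving that requires exhibiting a unit vector whose norm is not attained on $L^{q}(\mu,X^*)_1$. Your density route cannot produce it, since there are no smooth points. Your fallback cannot either, since it needs a smooth point whose support functional lies outside $L^{q}(\mu,X^*)$.

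What is missing is precisely that construction: when $X^*$ fails the RNP, find $f\in L^{p}(\mu,X)$ that attains its norm at no element of $L^{q}(\mu,X^*)_1$. For $X=\ell^{1}$, for instance, $f(t)=\sum_n 2^{-n}r_n(t)e_n$ works. Equality in H\"older forces any $g\in L^{q}(\mu,\ell^{\infty})_1$ norming $f$ to equal $(r_n(t))_n$ a.e. That map is not essentially separably valued, so $g$ is not Bochner measurable. Either cite the general result, as the paper does, or prove it; the smooth-point machinery cannot replace it.
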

\begin{proof}
    Suppose that each unit vector in $L^{p}(\mu, X)$ attains its norm some \textnormal{w}$^*$-\textnormal{w} PC in $L^{p}(\mu, X)_{1}^*$. Since \textnormal{w}$^*$-\textnormal{w} PCs in $L^{p}(\mu, X)_{1}^*$ belong to $L^{q}(\mu, X^*)_{1}$, we have that each unit vector in $L^{p}(\mu, X)$ attains its norm at some point in $L^{q}(\mu, X^*)_{1}$. Hence, the conclusion follows from \cite[Corollary~1.2]{JR}.
\end{proof}

\subsection{Open Problems: } We finish this paper with a list of a few open problems concerning \textnormal{w}$^*$-\textnormal{w} PCs and smooth points in the setting of Bochner function spaces.

\begin{qn}
Characterize the \textnormal{w}$^*$-\textnormal{w} PCs of $L^{p}(\mu, X)_{1}^*$ in terms of the \textnormal{w}$^*$-\textnormal{w} PCs of $X_{1}^*$ for $1<p<\infty$ remains open for those Banach spaces $X$ whose duals are not separable?
\end{qn}

\begin{qn}
 Characterize very smooth points of $L^{p}(\mu, X)$ in terms of very smooth points of of $X$ for $1<p<\infty$ remains open for those Banach spaces $X$ whose duals are not separable?
\end{qn}

\section*{Acknowledgements}
The author would like to thank Prof.~T.~S.~S.~R.~K.~Rao and Dr.~Priyanka Grover for their invaluable guidance, constant support, and inspiring mentorship throughout this work. Sincere thanks are due to the Department of Mathematics, Shiv Nadar Institution of Eminence, Delhi-NCR, India for providing a stimulating academic environment that greatly facilitated this research.

\bibliographystyle{plain, abbrv}

\end{document}